\newcommand{\SL}{{\mathrm {SL}}}
\newcommand{\SO}{{\mathrm {SO}}}
\newcommand{\GL}{{\mathrm {GL}}}
\newcommand{\tq}{\, :\, }
\newcommand{\id}{\operatorname{id}}
\newcommand{\dist}{\operatorname{dist}}
\renewcommand{\ln}{\log}
\def\e{{\Lambda}}
\def\m{{\mathrm{m}}}
\def\M{{\mathrm{M}}}
\def\wde{{\widehat \Delta_r}}
\def\proj{{\operatorname{proj}}}
\def\Leb{{\mathrm{Leb}}}
\def\Om{\mho}
\def\Ga{\Theta}
\def\Up{\Upsilon}
\def\De{\Xi}
\def\di{\,|\,}
\def\TV{{\mathcal{TV}}}
\def\VF{{\mathcal{VF}}}
\def\TF{{\mathcal{TF}}}
\renewcommand{\AA}{{\mathcal A}}
\newcommand{\BB}{{\mathcal B}}
\newcommand{\CC}{{\mathcal C}}
\newcommand{\FF}{{\mathcal F}}
\newcommand{\HH}{{\mathcal H}}
\newcommand{\MM}{{\mathcal M}}
\newcommand{\QQ}{{\mathcal Q}}
\renewcommand{\SS}{{\mathcal S}}
\newcommand{\TT}{{\mathcal T}}
\newcommand{\UU}{{\mathcal U}}
\newcommand{\VV}{{\mathcal V}}
\newcommand{\MH}{{\mathcal M}{\mathcal H}}
\renewcommand{\TH}{{\mathcal T}{\mathcal H}}
\newcommand{\THI}{{\mathcal T}{\mathcal H}{\mathcal I}}
\newcommand{\MHI}{{\mathcal M}{\mathcal H}{\mathcal I}}
\newcommand{\MHQ}{{\mathcal M}{\mathcal H}{\mathcal Q}}
\newcommand{\bA}{\mathfrak{A}}
\newcommand{\RR}{\mathfrak{R}}
\newcommand{\C}{{\mathbb C}}
\newcommand{\N}{{\mathbb N}}
\renewcommand{\P}{{\mathbb P}}
\newcommand{\R}{{\mathbb R}}
\newcommand{\Z}{{\mathbb Z}}
\newcommand\ssigma{\mathfrak{S}^0}
\newcommand\sssigma{\mathfrak{S}}
\def\BA{{\underline \AA}}
\def\balpha{{\underline \alpha}}
\def\bbeta{{\underline \beta}}
\def\bxi{{\underline \xi}}
\def\bx{{\underline x}}
\def\by{{\underline y}}
\def\bbetaj1{{\underline {\beta_{j_1}}}}
\def\bxij2{{\underline {\xi_{j_2}}}}
\def\balphai{\underline {\alpha_i}}
\def\bbetai{\underline {\beta_i}}
\def\be{\begin{equation}}
\def\ee{\end{equation}}
\newtheorem{thm}{Theorem}[section]
\newtheorem{cor}[thm]{Corollary}
\newtheorem{lem}[thm]{Lemma}
\newtheorem{lemma}[thm]{Lemma}
\newtheorem{prop}[thm]{Proposition}
\theoremstyle{remark}
\newtheorem{rem}[thm]{Remark}
\newtheorem{example}[thm]{Example}
\theoremstyle{definition}
\newtheorem{definition}[thm]{Definition}
\newcommand{\Cb}{C_0}
\newcommand{\eb}{\epsilon_0}
\newcommand{\expansion}{\kappa}
\newcommand{\norm}[1]{\left\| #1 \right\|}
\DeclareMathOperator{\dLeb}{dLeb}
\newcommand{\dd}{\, {\rm d}}
\newcommand{\const}{{\rm const\,}}
\begin{document}

\title[Exponential Mixing for Quadratic Differentials]{Exponential Mixing for the Teichm\"{u}ller flow in the Space of Quadratic Differentials}
%\date{\today}
\author{Artur Avila and Maria Jo\~ao Resende}

\address{CNRS UMR 7599,
Laboratoire de Probabilit\'es et Mod\`eles al\'eatoires.
Universit\'e Pierre et Marie Curie--Bo\^\i te courrier 188.
75252--Paris Cedex 05, France}
\thanks{This work was partially conducted during the period A.A.
served as a Clay Research Fellow.}
\curraddr{IMPA.
Estrada D. Castorina 110, Jardim Bot\^anico,
22460-320 Rio de Janeiro, Brazil.} 
\urladdr{www.impa.br/$\sim$avila/}
\email{artur@math.sunysb.edu}

\address{
IMPA. Estrada D. Castorina 110, Jardim Bot\^anico.
22460-320 Rio de Janeiro, Brazil.
}
\thanks{M.J.R. was supported by Funda\c{c}\~ao para a Ci\^encia e Tecnologia 
(FCT-Portugal) by the grant SFRH/BD/16135/2004.}
\email{mjoaor@impa.br}

\begin{abstract}
We consider the Teichm\"uller flow on the unit cotangent bundle of the
moduli space of compact Riemann surfaces with punctures.  We show that it is
exponentially mixing for the Ratner class of observables.  More generally,
this result holds for the restriction of the Teichm\"uller flow to an
arbitrary connected component of stratum.  This result generalizes \cite
{AGY} which considered the case of strata of squares.
\end{abstract} 

%\newpage

%\newpage

\maketitle

%\tableofcontents

\section{Introduction}

Let $g,n \geq 0$ be integers such that $3g-3+n>0$ and let
$\TT_{g,n}$ be the Teichm\"uller space of marked Riemann surfaces of genus
$g$ with $n$ punctures.
There is a natural $\SL(2,\R)$ action on the unit cotangent bundle
$\QQ^1_{g,n}$ to $\TT_{g,n}$, which preserves the natural (infinite)
Liouville measure.  The orbits of the diagonal flow project
to the geodesics of the Teichm\"uller metric on $\TT_{g,n}$.

Let $\QQ^*=\QQ^*_{g,n}$ be the quotient of $\QQ^1_{g,n}$ by the modular group
$\operatorname{Mod}(g,n)$.  The $\SL(2,\R)$ action descends to $\QQ^*_{g,n}$.  The
Liouville measure descends
to a {\it finite} measure $\mu=\mu_{g,n}$ on $\QQ^*_{g,n}$.
The diagonal flow $T_t:\QQ^*_{g,n} \to \QQ^*_{g,n}$ is called the
{\it Teichm\"uller geodesic flow}.

Veech showed that $T_t$ is mixing with respect to $\mu$:
if $\phi$ and $\psi$ are observables ($L^2$ functions) with zero
mean then
\be \label {1.1}
\lim_{t \to \infty} \int \phi (\psi \circ T_t) d\mu=\frac {1} {\mu(\QQ^*)}
\int \phi d\mu \int \psi d\mu.
\ee
Here we are interested in the speed of mixing, that is, the rate of
convergence of (\ref {1.1}). As usual, it is
necessary to specify a class of
``regular'' observables. The class for which our results apply
is the {\it Ratner class} $H$ of observables which are H\"older with
respect to the $\SO(2,\R)$ action. More precisely, letting $R_\theta$
denote the rotation of angle $2 \pi \theta$, $H$ is the set of all
$\phi \in L^2(\mu)$ such that $\theta \mapsto R_\theta \phi \in L^2(\mu)$
is a H\"older function (this includes all functions which are H\"older
with respect to the metric of the fiber). This is a natural class to
consider, since exponential
mixing for observables in the Ratner class is known to be
equivalent to the ``spectral
gap'' property for the $\SL(2,\R)$ action, (the hard direction of this
equivalence being due to Ratner, see the Appendix B of \cite {AGY}
for a discussion).

\begin{thm} \label {main}
The Teichm\"uller flow is exponentially mixing with respect to $\mu$ for
observables in the Ratner class.
\end{thm}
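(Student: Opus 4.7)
The plan is to reduce Theorem~\ref{main} to the abstract exponential mixing criterion for excellent hyperbolic semi-flows established in \cite{AGY}, which was designed precisely for the Teichm\"uller flow in the Abelian case. Concretely, I would build a Poincar\'e cross-section to the flow on each connected component of a stratum of quadratic differentials, realize the first return map as a skew product over a uniformly expanding Markov map with bounded distortion and exponentially decaying return time, verify the remaining axioms of an excellent hyperbolic semi-flow, and then invoke the AGY mixing theorem. The passage from decay of correlations for H\"older functions on the cross-section to exponential mixing in the Ratner class is then handled uniformly by the argument of Appendix~B of \cite{AGY}, which rests on Ratner's equivalence between spectral gap for the $\SL(2,\R)$ representation and exponential mixing of $T_t$ on the Ratner class; this argument is insensitive to whether the stratum parametrizes Abelian or quadratic differentials.

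The first main step is to construct the symbolic model. In the Abelian case one uses Veech's zippered rectangle coordinates together with Rauzy--Veech induction on interval exchange transformations. For quadratic differentials, the analogous combinatorial engine is Rauzy--Veech induction for \emph{linear involutions} (equivalently, generalized interval exchanges coming from horizontal separatrix diagrams), as developed by Danthony--Nogueira and refined by Boissy--Lanneau. This yields a natural countable Markov map on a simplex of length parameters, whose branches are indexed by paths in a Rauzy graph of reduced linear involutions. A Zorich-type acceleration, restricted to returns to a suitably chosen precompact neighbourhood in parameter space, then produces a uniformly expanding, bounded-distortion Markov map; the Teichm\"uller flow is (up to a finite-to-one covering issue handled by passing to the connected component) the suspension of this map under a roof function essentially equal to $-\log$ of a coordinate length.

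The second step is to verify the axioms of an excellent hyperbolic semi-flow. Uniform expansion and bounded distortion of the base map follow, as in \cite{AGY}, from projective hyperbolicity of the Rauzy--Veech cocycle along the selected loops. Approximate product structure in the transverse (stable) direction is inherited from the zippered rectangle / suspension datum built over the linear involution. The one genuinely analytic input is an \emph{exponential tail bound} for the roof function with respect to the invariant absolutely continuous measure on the base.

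The main obstacle is exactly this tail estimate. One must identify a ``distinguished'' precompact region of the stratum and show that the first return time of the accelerated Rauzy--Veech induction to this region has an exponential moment. In \cite{AGY} this was done by exploiting orientability of the Rauzy graphs and careful combinatorial surgeries on Rauzy--Veech paths; here the Rauzy graphs for linear involutions are strictly more complicated, because of the flips and the pairings of horizontal separatrices that encode the quadratic (as opposed to Abelian) nature of the differential. The strategy I would pursue is to adapt the AGY combinatorial argument: define the distinguished set via a strong recurrence condition on the linear involution (bounding the ``complexity'' of its symbol, which controls the cusps of moduli space), and then bound tails by a renewal-style argument on excursions, using a quantitative version of the fact that paths in the Rauzy graph that return deeply into complexity correspond geometrically to orbits that dive into the cusp. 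Once this tail estimate and the compatibility with the Boissy--Lanneau classification of connected components are in place, the remainder of the proof is essentially the verification, component by component, that the hypotheses of the AGY excellent hyperbolic semi-flow theorem hold, followed by its direct application to conclude exponential mixing for the Ratner class.
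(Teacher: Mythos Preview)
Your overall architecture is correct and matches the paper: build a combinatorial model via Rauzy--Veech induction for linear involutions (the paper phrases this as interval exchange transformations with involution, coming from the orientation double cover, following \cite{BL}), induce on a precompact section to get a uniformly expanding Markov map, verify the axioms of an excellent hyperbolic semi-flow, apply the abstract AGY theorem, and pass to the Ratner class via Appendix~B of \cite{AGY}. The paper does exactly this.

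However, there is a real gap at the step you yourself flag as the main obstacle. You propose to obtain the exponential tail estimate by ``adapt[ing] the AGY combinatorial argument'' via surgeries on Rauzy paths and a renewal argument. The paper explicitly explains that this does \emph{not} work: both proofs of the distortion estimate in \cite{AGY} rely on a simple closed-form description of the transition probabilities for the random walk on the Rauzy graph, and this description is unavailable for Rauzy induction with involution (the restricted random walk no longer lives on a finite graph with the same explicit transition structure). The paper's genuine new contribution is an alternative distortion estimate (their Theorem in Section~\ref{distortion}) proved by an inductive argument on the number of coordinates of $q$ that have been ``captured'' above $\M(q)$, using only coarse measure comparisons on elementary subsimplices of the polytopes $\Lambda_{\pi,q}$ rather than exact transition probabilities. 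This is what drives the recurrence estimate (their Proposition~\ref{6.2}) and hence the exponential tails. Without identifying a replacement for the AGY transition-probability computation, your plan would stall precisely here.

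A secondary point: the paper does not use a Zorich-type acceleration. Instead it fixes a single strongly positive ``neat'' loop $\gamma_*$ and takes the first-return section $\widehat\Xi = \widehat\Upsilon^{(1)}_\RR \cap (\Delta_{\gamma_*}\times\Theta_{\gamma_*})$; uniform expansion and contraction then follow from precompactness via the Hilbert metric, and the roof function is the return time to this fixed section. This is a cleaner mechanism than acceleration for obtaining the Markov structure with the required properties.
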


In the sequel we will see the Theorem \ref{main} is a special case of a more 
general result on the restriction of Teichm\"uller flow to ``strata'' and 
discuss some of the ingredients in the proof.
First we will discuss in more detail the main notions involved in 
this statement.

\subsection{Quadratic differentials and half-translation surfaces}

A \emph{quadratic differential} $q$ on a Riemann surface $S$ (compact,
with punctures) assigns to each point of the surface a complex quadratic
form on the correspon\-ding tangent space, depending holomorphically 
on the point. Given any local coordinate $z$ on $S$, the quadratic 
differential may be written as $q_z=\phi(z) dz^2$ where the 
coefficient $\phi(z)$ is a holomorphic function; then the
expression $q_w=\phi'(w)dw^2$ with respect to any other local 
coordinate $w$ is determined by
$$
\phi'(w) = \phi(z) \left(\frac{dz}{dw}\right)^2
$$
on the intersection of the domains. The \emph{norm} of a quadratic 
differential is defined by $\|q\|=\int |\phi|\,dz\,d\bar z$ 
(the integral does not depend on the choice of the local 
coordinates). Quadratic differentials with finite norm are called 
\emph{integrable}: in this case the quadratic differential naturally extends
to a {\it meromorphic} quadratic differential on the completion of $S$,
with at worse simple poles at the punctures.  Below we will restrict considerations to
integrable quadratic differentials.

%\subsection{Half-translation surfaces}
Each quadratic differential $q$ induces a special geometric structure 
on the completion of $S$, as follows. Near any non-singular point (puncture or zero)
one can choose 
\emph{adapted coordinates} $\zeta$ for which the local expression 
of $q$ reduces to $q_\zeta=d\zeta^2$. Given any pair $\zeta_1$ and 
$\zeta_2$ of such adapted coordinates,
\begin{equation}\label{eq.halftranslationstructure}
(d\zeta_1)^2 = (d\zeta_2)^2 \text{\ \ or, equivalently,\ \ } \zeta_1 = \pm \zeta_2 + \const.
\end{equation}
Thus, we say that the set of adapted coordinates is a 
\emph{half-translation atlas} on the complement of the singularities 
and $S$ is a \emph{half-translation surface}. In particular, 
$S\backslash \{\text{singularities}\}$ is endowed with a flat Riemannian 
metric imported from the plane via the half-translation atlas. 
The total area of this metric coincides with the norm $\|q\|$. Adapted 
coordinates $\zeta$ may also be constructed in the neighborhood of 
each singularity $z_i$ such that $$q_\zeta = \zeta^{l_i}d\zeta^2$$ 
with $l_i \geq -1$. 
Through them, the flat metric can be completed with 
a conical singularity of angle equal to $\pi(l_i+2)$ at $z_i$ (thus $l_i=0$ corresponds
to removable singularities).

A quadratic differential $q$ is \emph{orientable} if it is the 
square of some \emph{Abelian differential}, that is, some holomorphic 
complex $1$-form $\omega$. Notice that square roots can always be 
chosen locally, at least far from the singularities, so that 
orientability has mostly to do with having a globally consistent 
choice.  In the orientable case adapted coordinates may be chosen 
so that $\omega_\zeta=d\zeta$. Changes between such coordinates
are given by \begin{equation}\label{eq.translationstructure}
d\zeta_1 = d\zeta_2 \text{\ \ or, equivalently,\ \ } \zeta_1 = \zeta_2 + \const
\end{equation}   
instead of \eqref{eq.halftranslationstructure}. One speaks of 
\emph{translation atlas} and \emph{translation surface} in this case. 
%A point $z_i$ is a singularity of $q$, of order $l_i$, if and only if 
%$z_i$ is a zero of $\omega$, of multiplicity $m_i$, with $l_i=2m_i$. 
We shall be particularly interested in the case when $q$ is \emph{not} orientable.

\subsection{Strata}\label{strata}

Each element of $\QQ^*_{g,n}$ admits a representation as a meromorphic
quadratic differential $q$
on a compact Riemann surface of genus $g$ with at most $n$ simple poles and
with $\|q\|=1$.
To each $q \in \QQ^*_{g,n}$ we can associate a symbol
$\sigma=(k,\nu,\varepsilon)$ where
\begin{enumerate}
\item $k$ is the number of poles,
\item $\nu=(\nu_j)_{j \geq 1}$ and $\nu_j$ is the number of zeros of order
$j$,
\item $\varepsilon \in \{-1,1\}$ is equal to $1$ if $q$ is the square
of an Abelian differential and to
$-1$ otherwise.
\end{enumerate}
We denote by $\QQ^*_{g}(\sigma)$ the {\it stratum} of all $q$ with symbol
$\sigma$.  A non-empty stratum is an analytic orbifold of real
dimension $4g+2k+2\sum \nu_j+\varepsilon-3$ which is invariant under the
Teichm\"uller flow.  Each
non-empty stratum carries a natural volume form and the corresponding
measure, $\mu_{g}(\sigma)$ has finite mass and is invariant under the
Teichm\"uller flow.

A stratum $\QQ^*_{g,n}(\sigma)$
is not necessarily connected, but it is finitely connected, and the
connected components are obviously $\SL(2,\R)$ invariant 
(see \cite{KZ03}, \cite{La04}, \cite{La08}).
Veech showed that the restriction
of the Teichm\"uller flow restricted to any connected component of
$\QQ_{g,n}^*(\sigma)$ is ergodic with respect to the restriction of
$\mu_{g,n}(\sigma)$.  In \cite {AGY}, it was shown that in the case
of strata of squares (that is, with $\varepsilon=1$) the Teichm\"uller
flow is exponentially mixing (for a class of H\"older observables) with
respect to $\mu$.  Their approach is followed here and generalized to yield:

\begin{thm} \label {1.2}\label{t.main}
The Teichm\"uller flow is exponentially mixing with respect to
each ergodic component of $\mu_{g}(\sigma)$ for observables in the
Ratner class.
\end{thm}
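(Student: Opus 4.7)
The plan is to follow the strategy of \cite{AGY} and reduce exponential mixing of the Teichm\"uller flow restricted to a connected component $\CC$ of a stratum $\QQ^*_g(\sigma)$ to a statement about a suspension flow over a uniformly expanding Markov map with an exponentially integrable roof function. Once this is in place, the abstract mixing mechanism of \cite{AGY} applies, and the Ratner-class conclusion on $\CC$ follows from the $\SO(2,\R)$-averaging argument of Appendix~B of \cite{AGY}, which reduces exponential mixing in the Ratner class to exponential decay of correlations for H\"older observables on a cross-section.

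First I would construct a symbolic cross-section. In the orientable case $\varepsilon=1$ this is done in \cite{AGY} via Rauzy--Veech zippered rectangles, giving a finite-to-one parametrization of $\CC$ by a combinatorial datum together with length and suspension data. For non-orientable $q$ one replaces interval exchange transformations by \emph{linear involutions} (encoding the $\pm 1$ ambiguity in the half-translation atlas \eqref{eq.halftranslationstructure}) and uses the corresponding analog of Rauzy--Veech induction (Boissy--Lanneau). Applied to a suitable precompact section $\Sigma \subset \CC$, this exhibits $T_t|_\CC$ as a suspension over a piecewise affine return map $T:\Sigma\to\Sigma$ with uniformly expanding linear parts and a well-defined roof function $r$. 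I would then verify that $T$ is a uniformly expanding Markov map in the sense of \cite{AGY}: a countable Markov partition, bounded distortion on cylinders, uniform expansion after finitely many iterates. This reduces to non-negativity and integrability of the Rauzy--Veech cocycle matrices for linear involutions and to combinatorial primitivity of each non-orientable Rauzy diagram, both accessible via the classification of connected components in \cite{La04,La08}.

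The hard step will be the exponential tail estimate $\mu_\Sigma(r > t) \leq C e^{-\delta t}$ for the return time. In \cite{AGY} this is the heart of the matter and rests on three ingredients: hyperbolicity of the Kontsevich--Zorich cocycle, a combinatorial ``shortening'' argument along Rauzy paths that exploits positivity of the cocycle, and a large-deviation bound for Masur--Veech measure restricted to surfaces with short saddle connections. Each ingredient must be re-examined in the half-translation setting. Hyperbolicity is now an assertion about an invariant subbundle of the cocycle on the orientation double cover of $(S,q)$, where one has to separate the part of homology coming from the base from the anti-invariant part. The shortening combinatorics must be redone on the Rauzy graphs of linear involutions, which are strictly richer than in the square case and must be handled with care near the simple poles (the $l_i=-1$ singularities). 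The large-deviation bound must be established for $\mu_g(\sigma)$ directly, using the explicit structure of the cusp neighborhoods in $\QQ^*_g(\sigma)$. I expect the overall skeleton of the \cite{AGY} estimate to carry over, but this is where the genuine work beyond \cite{AGY} lies.

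Once these three items are secured, the abstract exponential mixing theorem of \cite{AGY} for suspension flows over uniformly expanding Markov maps with exponentially integrable roof function applies verbatim to $(T,r)$, yielding exponential decay of correlations for H\"older observables on $\Sigma$. Combined with the spectral-gap/$\SO(2,\R)$-averaging equivalence of Appendix~B of \cite{AGY}, this gives exponential mixing of $T_t$ on $\CC$ for Ratner-class observables, which is the content of Theorem~\ref{t.main}; Theorem~\ref{main} then follows by applying this result to each connected component of each stratum and summing.
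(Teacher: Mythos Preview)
Your overall architecture is correct and matches the paper: model the Teichm\"uller flow on a connected component as a suspension over a Rauzy-type induction for linear involutions (equivalently, interval exchange transformations with involution via the orientation double cover), verify the axioms of an excellent hyperbolic semi-flow in the sense of \cite{AGY}, apply Theorem~\ref{main_thm_hyperbolic}, and then invoke Appendix~B of \cite{AGY} to pass from $C^1$ decay on the model to Ratner-class mixing on the stratum. The choice of a precompact section indexed by a neat strongly positive path, the skew-product verification, and the final regularisation/approximation step are all as in the paper.

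There is, however, a genuine gap in the step you flag as the hard one. You propose to obtain the exponential tail of the roof function by adapting the \cite{AGY} argument, which you describe as resting on Kontsevich--Zorich hyperbolicity, a shortening argument, and large deviations for short saddle connections. This is not how \cite{AGY} proves the tail estimate (there it is a direct distortion estimate for the Rauzy random walk), and more to the point, the paper states explicitly in the introduction that both \cite{AGY} proofs of the distortion estimate rely on features of ordinary Rauzy induction---in particular a simple description of transition probabilities---that do \emph{not} carry over to the induction with involution. The paper's actual contribution here is a new distortion estimate (Theorem~\ref{4}) proved by an inductive argument on $m_k(q)$ that avoids precise transition-probability computations; this feeds into the recurrence estimate (Proposition~\ref{6.2}) and hence exponential tails (Theorem~\ref{exponentialtails}). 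Your expectation that ``the overall skeleton of the \cite{AGY} estimate carries over'' is therefore the point where the plan would stall, and you would need to discover the replacement argument of \S\ref{distortion}.

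A second omission: you do not mention condition~(3) of Definition~\ref{def:goodroof}, namely that the roof function is not cohomologous to a locally constant function. This is not automatic and in the paper requires a separate algebraic argument (Lemmas~\ref{Hpi} and \ref{dimH}) showing that $H(\pi)$ has large enough dimension to rule out the degenerate cohomological equation; without it the Dolgopyat-type mechanism behind Theorem~\ref{main_thm_hyperbolic} does not apply.
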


There is a single stratum $\QQ^*_{g,n}(\sigma)$
with maximum dimension, which is open and connected
and has full $\mu_{g,n}$ measure: for this stratum,
$\mu_{g,n}$ coincides with $\mu_{g,n}(\sigma)$.
Thus Theorem \ref {main} is a particular case of this one.

\subsection{Outline of the proof}

Our approach to exponential mixing follows \cite {AGY} which develops
around a combinatorial description of the moduli space of Abelian differentials.

The combinatorial description which we will use in the treatment of quadratic
differentials, essen\-tially the one of \cite {BL}, builds from the observation
that the space of (non-orientable) quadratic differentials can be 
viewed as a subset of the space of Abelian differentials with involution. 
Indeed, it is well known that given any quadratic differential 
$q$ on a Riemann surface $S$ of genus $g$ there exists a double 
covering $\pi:\tilde S \to S$, branched over the singularities 
of odd order, and there is an Abelian differential $\omega$ on 
the surface $\tilde S$ such that  $\pi_*(\omega^2)=q$. In other words, 
$q$ lifts to an orientable quadratic differential on $\tilde S$. 
In this construction,
\begin{itemize}
\item to each zero of $q$ with even multiplicity $l_i\ge 1$ corresponds a 
pair of zeros of $\omega$ with multiplicity $m_j=l_i/2$;
\item to each zero of $q$ with odd multiplicity $l_i\ge 1$ corresponds a 
zero of $\omega$ with multiplicity  $m_j=l_i+1$;
\item to each pole of $q$ with $l_i=-1$ corresponds a removable 
(that is, order $0$) singularity of $\omega$.
\end{itemize} 
The surface $\tilde S$ is connected if and only if $q$ is non-orientable. 
Notice $i_*(\omega)=\pm\omega$, where $i: \tilde S \to \tilde S$ is the 
involution permuting the points in each fiber of the double cover $\pi$.

An Abelian differential induces a
translation structure on the surface.  In particular we can speak of the 
horizontal flow to the ``east'' and the vertical flow to the ``north'' 
(the involution exchanges north with south and east with west).

Thus, we consider moduli spaces of Abelian differentials with involution 
and a certain combinatorial marking.  The combinatorial marking 
includes the order of the zeros at the singularities, but also 
a distinguished singularity with a fixed eastbound separatrix. 
This moduli space $\MM$ is a finite cover of
$\QQ_{g,n}(\sigma)$ where $\SL(2,\R)$ is still acting, and thus it is
enough to prove the result on this space.

We parametrize the moduli space as a moduli space of zippered 
rectangles with involution as follows.
Choosing a convenient segment $I$ inside the separatrix, we look 
at the first return map under the northbound flow
to the union of the $I$ and its image under the involution. This map is an
{\it interval exchange transformation with involution}, and the original
northbound flow becomes a suspension flow, living in the union of some
rectangles.  The original surface can be obtained from the rectangles by
gluing appropriately.
This construction can be carried out in a large open set of $\MM$ 
(with complement of codimension $2$, see \cite{Ve86}).

Once this combinatorial model is setup, one can view the Teichm\"uller 
flow on $\MM$ as a suspension flow over a (weakly) hyperbolic transformation, 
which is itself a skew-product over a (weakly) expanding transformation, 
the Rauzy algorithm with involution.

We then consider some 
appropriate compact subset of the domain of the Rauzy algorithm 
with involution: the induced transformation is automatically 
expanding and the Teichm\"uller flow is thus modelled on an 
``excellent hyperbolic flow'' in the language of \cite {AGY}. 
Two properties need to be verified to deduce exponential mixing: 
the return time should not be cohomologous to locally cons\-tant, 
and it should have exponential tails. The first property is 
an essentially algebraic consequence of the zippered rectangle construction. 
The second depends essentially on proving some distortion estimate. 
Both proofs of the distortion estimate in \cite {AGY} depend 
heavily on certain properties of the usual Rauzy induction 
(simple description of transition probabilities for a random walk) 
which seem difficult to generalize to our setting. 
We provide here an alternative proof which is less dependent 
on precise estimates for the random walk.

\begin{rem}
Since the moduli space of zippered
rectangles with involution can be regarded
as a (Teichm\"uller flow-invariant)
subspace of the larger moduli space of all zippered rectangles, it
would seem natural
to carry out the analysis around an appropriate restriction of the usual
Rauzy algorithm.  However, while the Rauzy algorithm can be modelled
as a random walk
on a finite graph, this property does not persist after restriction
(though there is still a natural random walk model, it takes place in
an infinite graph).
\end{rem}

{\bf Acknowledgements:}  We thank Erwan Lanneau for suggesting the use of
the double cover construction to define an appropriate renormalization
dynamics. We also thank Carlos Matheus and Marcelo Viana for many useful remarks.

\section{Excellent hyperbolic semi-flows}
\label{par:define_exp}

In \cite {AGY}, an abstract result for exponential mixing was proved for the
class of so-called \emph{excellent hyperbolic semi-flows}, following the work
of Baladi-Vall\'ee \cite {BV} based on the
foundational work of Dolgopyat \cite {Do}.  This result can be directly used
in our work.  In this section we state precisely this result, which will
need several definitions.

By definition, a Finsler manifold is a smooth manifold endowed with
a norm on each tangent space, which varies continuously with the base point.

\begin{definition}
A \emph{John domain} $\Delta$ is a finite dimensional connected
Finsler manifold, together with a measure $\Leb$ on $\Delta$, with
the following properties:
\begin{enumerate}
\item For $x,x' \in \Delta$, let  $d(x,x')$ be the infimum
of the length of a $C^1$ path contained in $\Delta$ and
joining $x$ and $x'$. For this distance, $\Delta$ is bounded and
there exist constants $\Cb$ and
$\eb$ such that, for all $\epsilon<\eb$, for all $x\in \Delta$,
there exists $x'\in \Delta$ such that $d(x,x') \leq \Cb \epsilon$
and such that the ball $B(x',\epsilon)$ is compactly contained in
$\Delta$.
\item The measure $\Leb$ is a fully supported finite
measure on $\Delta$, satisfying the following inequality: for all
$C>0$, there exists $A>0$ such that, whenever a ball $B(x,r)$ is
compactly contained in $\Delta$, $\Leb(B(x,Cr))\leq A \Leb(B(x,r))$.
\end{enumerate}
\end{definition}
%For example, if $\Delta$ is an open subset of a larger manifold,
%with compact closure, whose boundary is a finite union of smooth
%hypersurfaces in general position, and 
%$\Leb$ is obtained by restricting to $\Delta$ a smooth
%measure defined in a neighborhood of $\overline{\Delta}$, then
%$(\Delta,\Leb)$ is a John domain. 

\begin{definition} \label {markovmap}
Let $L$ be a finite or countable set, 
let $\Delta$ be a John domain, and let $\{\Delta^{(l)}\}_{l\in L}$ be a
partition into open sets of a full measure subset of $\Delta$. A map $T:
\bigcup_{l} \Delta^{(l)} \to \Delta$ is a \emph{uniformly expanding
Markov map} if
\begin{enumerate}
\item For each $l$, $T$ is a $C^1$ diffeomorphism between
$\Delta^{(l)}$ and $\Delta$, and there exist constants
$\expansion>1$ (independent of $l$) and $C_{(l)}$ such that, for all
$x\in \Delta^{(l)}$ and all $v\in T_x \Delta$, $\expansion \norm{v}
\leq \norm{ DT(x)\cdot v} \leq C_{(l)} \norm{v}$.
\item Let $J(x)$ be the inverse of the Jacobian of $T$ with respect to $\Leb$.
Denote by $\HH$ the set of inverse branches of $T$. The function
$\log J$ is $C^1$ on each set $\Delta^{(l)}$ and 
there exists $C>0$ such that, for all $h\in \HH$, $\norm{ D((\log
J)\circ h)}_{C^0(\Delta)}\leq C$.
\end{enumerate}
\end{definition}
Such a map $T$ preserves a unique absolutely continuous measure
$\mu$. Its density is bounded from above and from below and is
$C^1$.

\begin{definition}
\label{def:goodroof}
Let $ T:\bigcup_{l} \Delta^{(l)} \to \Delta$ be
a uniformly expanding Markov map on a John domain. A function $r:
\bigcup_{l} \Delta^{(l)} \to \R_+$ is a \emph{good roof function} if
\begin{enumerate}
\item There exists $\epsilon_1>0$ such that $r \geq \epsilon_1$.
\item There exists $C>0$ such that, for all $h\in \HH$, $\norm{
D(r\circ h)}_{C^0}\leq C$.
\item It is not possible to write $r=\psi + \phi\circ T-\phi$ 
on $\bigcup_{l} \Delta^{(l)}$,
where $\psi:\Delta \to \R$ is constant on each set $\Delta^{(l)}$
and $\phi:\Delta\to \R$ is $C^1$.
\end{enumerate}
\end{definition}
If $r$ is a good roof function for $T$, we will write
$r^{(n)}(x)=\sum_{k=0}^{n-1}r(T^k x)$.

\begin{definition}
A good roof function $r$ as above has
\emph{exponential tails} if there exists $\sigma_0>0$ such that
$\int_{\Delta} e^{\sigma_0 r} \dLeb <\infty$.
\end{definition}

If $\widehat \Delta$ is a Finsler manifold, we will denote by
$C^1(\widehat{\Delta})$ the set of functions $u:\widehat{\Delta}\to
\R$ which are bounded, continuously differentiable, and such that
$\sup_{x\in \widehat{\Delta}} \norm{Du(x)}< \infty$. Let
  \begin{equation}
  \label{define_C1}
  \norm{u}_{C^1(\widehat{\Delta})} = \sup_{x\in \widehat{\Delta}} |u(x)| + \sup_{x\in
  \widehat{\Delta}} \norm{Du(x)}
  \end{equation}
be the corresponding norm.

\begin{definition} \label {skew-product map}
Let $T:\bigcup_{l} \Delta^{(l)} \to \Delta$ be a uniformly expanding
Markov map, preserving an absolutely continuous
measure $\mu$. A \emph{hyperbolic
skew-product} over $T$ is a map $\widehat{T}$ from a dense open subset
of a bounded connected
Finsler
manifold $\widehat{\Delta}$, to $\widehat{\Delta}$, satisfying the following
properties:
\begin{enumerate}
\item There exists a continuous
map $\pi : \widehat{\Delta} \to \Delta$
such that $T\circ \pi = \pi \circ \widehat{T}$ whenever both members
of this equality are defined.
\item There exists a probability measure $\nu$ on $\widehat{\Delta}$,
giving full mass to the domain of definition of $\widehat{T}$, which is
invariant under $\widehat{T}$.
\item There exists a family of probability measures $\{\nu_x\}_{x\in
\Delta}$ on $\widehat{\Delta}$ which is a disintegration of $\nu$ over
$\mu$ in the
following sense: $x\mapsto \nu_x$ is measurable, $\nu_x$ is supported
on $\pi^{-1}(x)$ and, for every measurable set $A \subset
\widehat{\Delta}$, $\nu(A)=\int \nu_x(A) \dd\mu(x)$.

Moreover, this disintegration satisfies the following property: 
there exists a constant $C>0$ such that,
for any open subset $O\subset \bigcup \Delta^{(l)}$, for any
$u\in C^1(\pi^{-1}(O))$, the
function $\bar u : O \to \R$ given by $\bar u(x)=\int u(y)
\dd\nu_x(y)$ belongs to $C^1(O)$ and
satisfies the inequality
  \begin{equation*}
  \sup_{x\in O} \norm{D\bar u(x)}
  \leq C \sup_{y\in \pi^{-1}(O)} \norm{Du(y)}.
  \end{equation*}
\item
There exists $\expansion>1$ such that, for all $y_1,y_2 \in
\widehat{\Delta}$ with $\pi(y_1)=\pi(y_2)$, holds
  \begin{equation*}
  d(\widehat{T} y_1, \widehat{T} y_2) \leq \expansion^{-1} d(y_1,y_2).
  \end{equation*}
\end{enumerate}
\end{definition}

Let $\widehat{T}$ be an hyperbolic skew-product over a uniformly
expanding Markov map $T$. Let $r$ be a good roof function for $T$,
with exponential tails. It is then possible to define a space
$\widehat{\Delta}_r$ and a semi-flow $\widehat{T}_t$ over
$\widehat{T}$ on $\widehat{\Delta}$, using the roof function $r\circ
\pi$, in the following way.
 Let $\widehat{\Delta}_r =
\{(y,s) \tq y\in \bigcup_l \widehat{\Delta}_l, 0\leq s<r(\pi y )\}$.
For almost
all $y\in \widehat{\Delta}$, all $0\leq s<r(\pi y)$ and all $t\geq 0$, there
exists a unique $n\in \N$ such that $r^{(n)}(\pi y)\leq
t+s<r^{(n+1)}(\pi y)$. Set $\widehat{T}_t (y,s)=(\widehat{T}^n y,
s+t -r^{(n)}(\pi y))$. This is a semi-flow defined almost everywhere
on $\widehat{\Delta}_r$. It preserves the probability measure
$\nu_r=\nu \otimes \Leb/ (\nu\otimes \Leb)(\widehat{\Delta}_r)$.
Using the canonical Finsler metric on $\widehat{\Delta}_r$, namely
the product metric given by 
$\|(u,v)\|:=\|u\|+\|v\|$, we define the space $C^1(\widehat{\Delta}_r)$ as
in \eqref{define_C1}.  Notice that $\widehat \Delta_r$ is not connected, and
the distance between points in different connected components is infinite.

\begin{definition}
A semi-flow $\widehat{T}_t$ as above is called an \emph{excellent
hyperbolic semi-flow}.
\end{definition}

\begin{thm}[\cite {AGY}]
\label{main_thm_hyperbolic} Let $\widehat{T}_t$ be an excellent
hyperbolic semi-flow on a space $\widehat{\Delta}_r$, preserving the
probability measure $\nu_r$. There exist constants $C>0$ and
$\delta>0$ such that, for all functions $U,V\in
C^1(\widehat{\Delta}_r)$, for all $t\geq 0$,
  \begin{equation*}
  \left| \int U\cdot V \circ \widehat{T}_t \dd\nu_r -\left( \int U
  \dd\nu_r\right) \left( \int V \dd\nu_r \right) \right| \leq C
  \norm{U}_{C^1} \norm{V}_{C^1}e^{-\delta t}.
  \end{equation*}
\end{thm}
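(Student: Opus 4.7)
My plan is to follow the Dolgopyat strategy as refined by Baladi--Vall\'ee, which is the approach of \cite{AGY}: one reduces the mixing statement, via transfer operators, to a spectral estimate on the base expanding dynamics $T$.

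First, I would reduce the hyperbolic skew-product $\widehat T$ to the expanding factor $T$. The fiber contraction (property (4) of Definition \ref{skew-product map}) together with the disintegration regularity (property (3)) let us replace a $C^1$ observable $V$ on $\widehat \Delta_r$ by a function that is essentially constant along fibers of $\pi$: after many iterations of $\widehat T$, any two points over the same base point are exponentially close in $\widehat \Delta$, so up to an error of order $\expansion^{-n}\|V\|_{C^1}$ one can integrate $V\circ\widehat T_t$ against the fiber measure $\nu_x$ and obtain a function on the base suspension $\Delta_r$. This reduces the statement to exponential decay of correlations for the expanding suspension semi-flow over $T$ with roof $r$, applied to $C^1$ observables on $\Delta_r$.

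The second step is to Laplace-transform the correlation function. Define the family of weighted transfer operators
\begin{equation*}
(\LL_s f)(x) := \sum_{h\in \HH} e^{-s\, r(h(x))}\, J(h(x))\, f(h(x)),
\end{equation*}
which, by the exponential-tails hypothesis, are analytic in $s = a+ib$ on a strip $|a|<\sigma_0$ and bounded on $C^1(\Delta)$ thanks to part (2) of Definition \ref{markovmap}. The correlation function of the suspension semi-flow can be expressed as an integral over a vertical line of a quantity involving $(I-\LL_s)^{-1}$; shifting the contour to $a=-\delta$ for some $\delta>0$ yields the factor $e^{-\delta t}$, provided one controls the resolvent uniformly in $b\in\R$ on a strip $-\delta<a\leq 0$.

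The main obstacle, and the technical heart of the proof, is a Dolgopyat-type bound of the form $\|\LL_{a+ib}^n\|_{1,b}\leq C\, e^{-\delta' n}$ for $|b|$ large, where $\|\cdot\|_{1,b}$ is a $|b|$-adapted $C^1$-type norm. For small $|b|$ the spectral gap of $\LL_0$, a standard consequence of uniform expansion and bounded distortion of $\log J$, persists by analytic perturbation. For large $|b|$ one invokes the non-cohomology condition of Definition \ref{def:goodroof}(3): since $r$ is not cohomologous to a $T$-locally constant function, the oscillatory sums $\sum e^{-ib\, r^{(n)}(\cdot)}$ exhibit uniform cancellations between distinct inverse branches, which is quantified by a van der Corput style argument using the uniform $C^1$-control of $r\circ h$ and $\log J\circ h$ supplied by Definitions \ref{markovmap} and \ref{def:goodroof}. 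Combining both regimes and inverting the Laplace transform by a Paley--Wiener type contour deformation then completes the proof.
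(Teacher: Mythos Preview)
The paper does not prove this theorem: it is quoted as a black-box result of \cite{AGY} (see the opening paragraph of Section~\ref{par:define_exp}) and then applied at the end of Section~\ref{excelent}. So there is no ``paper's own proof'' to compare with beyond the citation.

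Your outline is a faithful high-level summary of the argument actually carried out in \cite{AGY}, in the Dolgopyat/Baladi--Vall\'ee framework: reduction from the skew-product to the expanding base using fiber contraction and the disintegration regularity, Laplace transform of the correlation function, analysis of the weighted transfer operators $\LL_s$ on a strip (where the exponential-tails hypothesis is needed because the Markov partition is countable), perturbative spectral gap for small $|b|$, and a Dolgopyat cancellation estimate for large $|b|$ exploiting condition (3) of Definition~\ref{def:goodroof}. As a sketch this is correct.

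If you intend it to stand as a proof, the main place where the sketch is too thin is the large-$|b|$ step. Calling it ``a van der Corput style argument'' undersells what is needed: one has to extract from the non-coboundary condition on $r$ a uniform non-integrability statement (a pair of inverse branches with a temporal-distance function whose derivative is bounded away from zero on a definite set), and then run a quantitative $L^2$-contraction argument on a $|b|$-adapted cone of $C^1$ functions, with careful bookkeeping to handle the infinitely many branches. None of this is automatic in the countable-branch setting, and it is where most of the work in \cite{AGY} lies.
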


\section{The Veech flow with involution}
\label{combinatorial}

\subsection{Rauzy classes and interval exchange transformations with involution}
\label{Rauzyclasses}

\subsubsection{Interval exchange transformations with involution}
Let $\AA$ be an alphabet on $2d \geq 4$ letters with an involution 
$i:\AA \to \AA$ and let $* \notin \AA$. When
considering objects modulo involution, we will use underline: for instance
the involution class of an element $\alpha \in \AA$ will be denoted by
$\balpha \in \BA=\AA/i$.
An \emph{interval exchange transformation with involution} of type
$(\AA,i,*)$ depends on the specification of the following data:

\begin{description}
\item [Combinatorial data:]
Let $\pi:\AA \cup \{*\} \to \{1,\ldots,2d+1\}$ be a bijection such that
neither $i(\AA_l) \subset \AA_r$ nor $i(\AA_r) \subset \AA_l$, where
$\AA_l=\{\alpha \in \AA,\, \pi(\alpha)<\pi(*)\}$ and $\AA_r=\{\alpha \in
\AA_r,\, \pi(\alpha)>\pi(*)\}$. The combinatorial data can be viewed as a
row where the elements of $\AA \cup \{*\}$ are displayed in the order
$(\pi^{-1}(1),\ldots,\pi^{-1}(2d+1))$.
\item [Length data:] Let $\lambda \in \R_+^\BA$ be a vector satisfying
\be \label {leftright}
\sum_{\pi(\alpha)<\pi(*)}
\lambda_\balpha=\sum_{\pi(\alpha)>\pi(*)} \lambda_\balpha
\ee
(it is easy to find such a vector $\lambda$).
\end{description}

Let $\sssigma=\sssigma(\AA,i,*)$ be the set of all bijections $\pi$ as above.

The transformation is then defined as follows:
\begin{enumerate}
\item Let $I \subset \R$ be the interval (all intervals will be assumed to be
closed at the left and open at the right) centered on $0$ and of length
$|I| \equiv \sum_{\alpha \in \BA} \lambda_\balpha$ (notice that 
$|I| = 2 \sum_{\balpha \in \BA} \lambda_\balpha$).
\item Let $\overline \pi:\AA \cup \{*\} \to \{1,\ldots,2d+1\}$ be defined by
$\overline \pi(*)=2d+2-\pi(*)$ and $\overline
\pi(\alpha)=2d+2-\pi(i(\alpha))$.
\item Break $I$ into $2d$ subintervals $I_\alpha$ of length $\lambda_\balpha$,
ordered according to $\pi$.
\item Rearrange the subintervals inside $I$ in the order given by $\overline
\pi$.
\end{enumerate}

%\begin{example}\label{permutation}
%To simplify the notation, given a letter we will represent 
%its image by involution with an inverted letter 
%(i.e. symmetric relatively to the horizontal or the vertical direction).
%\begin{figure}[h]
%\begin{center}
% \input intervalobasepb.pdf_t
%\caption{Interval exchange transformation with involution.}
%\end{center}
%\end{figure}

%As we can see in this example, the bottom row of letters is equal to the 
%top one rotated by 180 degrees around the central point of the interval. 
%In fact, this always occurs by definition of $\overline \pi$.
%\end{example}

\subsubsection{Rauzy classes with involution}

We define two operations, the \emph{left} and the \emph{right} on $\sssigma$ 
as follows. Let $\alpha$ and $\beta$ be the leftmost and the rightmost letters 
of the row representing $\pi$, respectively.
If $\beta \neq i(\alpha)$ and
taking $\beta$ and putting it into the position immediately after
$i(\alpha)$ results in a row representing an element $\pi'$ of $\sssigma$,
we say that the left operation is defined at $\pi$, and it takes
$\pi$ to $\pi'$. In this case, we say that $\alpha$ wins and $\beta$ loses. 
Similarly, if $\alpha \neq i(\beta)$ and taking $\alpha$ and 
putting it into the position immediately 
before $i(\beta)$ results in a row representing
an element $\pi'$ of $\sssigma$,
 we say that the right operation is defined at $\pi$, and it takes 
$\pi$ to $\pi'$. In this case, we say that $\beta$ wins and $\alpha$ loses.

\begin{rem}
 Notice that $\beta \neq i(\alpha)$ and $\alpha \neq i(\beta)$ are 
equivalent conditions since the involution $i$ is a bijection. But 
to define left (respectively right) operation we also ask that the 
row obtained after moving $\beta$ (respectively $\alpha$) represents 
an element of $\sssigma$. So we can have either just one of the 
operations defined at some permutation or both.
%, as we can see in 
%the next example.
\end{rem}

%\begin{example}
%Given the permutation $\pi$ defined in the example \ref{permutation}, both 
%operations (left denoted by $L$ and right denoted by $R$) are defined at $\pi$. 
%$$\xymatrix{
%& \ar[dl]_{L} 
%{ D \; \invB \; \invD \; C \; \invC \; * \; A \; \invA \; B} 
%\ar[dr]^{R}\\
%{D \; \invB \; \invD \; B \; C \; \invC \; * \; A \; \invA}  && 
%{D \; \invB \; \invD \; C \; \invC \; * \; A \; \invA \; B}  }$$

%But the left operation is not defined at 
%$\pi' =  D \; \invB \; \invD \; B \; C \; \invC \; * \; A \; \invA$. 
%In fact, if it was defined, we would have: 
%\begin{equation*}
%\begin{CD}
%{\begin{array}{cccccccccc} 
%\pi' = & D & \invB & \invD & B & C & \invC & * & A & \invA
%\end{array}
%}\\ 
%@VLVV \\
%{\begin{array}{cccccccccc} 
%\pi'' = & D & \invB & \invD & \invA & B & C & \invC & * & A
%\end{array}
%}
%\end{CD}
%\end{equation*}
%and the associated length vector $\lambda''\in \R^\BA_+$, by condition  
%(\ref{leftright}), would satisfy 
%$\lambda''_A=\lambda''_A+\lambda''_B+\lambda''_C+\lambda''_D$, which 
%is impossible, so $\pi'' \notin \sssigma$.
%\end{example}

Consider an oriented diagram with vertices which are the elements of $\sssigma$ 
and oriented arrows representing the operations left and right starting and 
ending at two vertices of $\sssigma$. We will say that such an arrow has
type left or right, respectively.  We will call this diagram by
\emph{Rauzy diagram with involution}. 
A \emph{path} $\gamma$ of length $m \geq 0$ is a sequence of $m$ arrows,
$a_1, \ldots, a_m$,
 joining $m+1$ vertices, $v_0, \ldots, v_m$, respectively. In this case 
we say that $\gamma$ starts at $v_0$, it ends at $v_m$ and pass through 
$v_1, \ldots, v_{m-1}$. Let $\gamma_1$ and $\gamma_2$ be two paths such 
that the end of $\gamma_1$ is the start of $\gamma_2$. We define their 
concatenation denoted by $\gamma_1 \gamma_2$, which also is a path. 
A path of length zero is identified with a vertex and if it has length 
one we identify it with an arrow. 
%We consider a partial order on paths: 
%$\gamma_s \leq \gamma$ if and only if $\gamma$ starts by $\gamma_s$.

A \emph{Rauzy class with involution} $\RR$ is a minimal non-empty subset 
of $\sssigma$, which is invariant under the left and the right operations, and
such that any involution class admits a representative which is the winner
of some arrow starting (and ending) in $\RR$.  Elements of Rauzy classes
with involution are said to be \emph{irreducible}. We denote by 
$\ssigma=\ssigma(\AA) \subset \sssigma$ the set of irreducible permutations 
and let $\Pi(\RR)$ be the set of all paths.

\begin{lemma}
If $\pi$ is irreducible then the left operation
(respectively the right operation) is defined at $\pi$ if and only if
there exists $\lambda \in \R^\BA_+$ satisfying (\ref {leftright}) such that
$\lambda_\balpha>\lambda_\bbeta$ (respectively
$\lambda_\bbeta>\lambda_\balpha$) where $\alpha$ and $\beta$ are the
leftmost and the rightmost elements of $\pi$.
\end{lemma}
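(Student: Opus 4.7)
My plan is to prove the lemma by a direct case analysis that reduces both sides of the equivalence to the same combinatorial dichotomy. Let $L \subset \BA$ (resp.\ $R \subset \BA$) denote the set of involution classes whose two representatives both lie in $\AA_l$ (resp.\ $\AA_r$), and put $M = \BA \setminus (L \cup R)$. A brief check shows that $\pi \in \sssigma$ is equivalent to $L \neq \emptyset$ and $R \neq \emptyset$, and the equation (\ref{leftright}) simplifies to $\sum_{{\underline \gamma} \in L}\lambda_{\underline \gamma} = \sum_{{\underline \gamma} \in R}\lambda_{\underline \gamma}$, cutting out an open cone $K \subset \R_+^\BA$. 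Since $\alpha$ is leftmost and $\beta$ rightmost, $\balpha \in L \cup M$ and $\bbeta \in R \cup M$.

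On the combinatorial side I would split into three cases depending on the location of $i(\alpha)$ and $i(\beta)$: (a) $i(\alpha) \in \AA_r$; (b1) $i(\alpha) \in \AA_l$ and $i(\beta) \in \AA_l$; (b2) $i(\alpha) \in \AA_l$ and $i(\beta) \in \AA_r$. Denote by $L', R'$ the analogous sets for the row $\pi'$ obtained by moving $\beta$ immediately after $i(\alpha)$. A direct bookkeeping gives $L' = L$, $R' = R$ in case (a); $L' = L \cup \{\bbeta\}$, $R' = R$ in case (b1); and $L' = L$, $R' = R \setminus \{\bbeta\}$ in case (b2). Hence $\pi' \in \sssigma$, i.e.\ the left operation is defined, unconditionally in (a) and (b1), and precisely when $|R| \geq 2$ in case (b2).

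In parallel I would examine when $\lambda_{\balpha} > \lambda_{\bbeta}$ is realizable inside $K$. In cases (a) and (b1) at least one of $\balpha, \bbeta$ lies in $M$ and is therefore absent from the linear constraint; one can then freely take $\lambda_{\balpha}$ large and $\lambda_{\bbeta}$ small while choosing the remaining coordinates to balance $\sum_L$ against $\sum_R$. In case (b2) the equation rewrites as
\[
\lambda_{\balpha} - \lambda_{\bbeta} = \sum_{{\underline \gamma} \in R \setminus \{\bbeta\}} \lambda_{\underline \gamma} - \sum_{{\underline \gamma} \in L \setminus \{\balpha\}} \lambda_{\underline \gamma}.
\]
If $|R| = 1$, the right-hand side is $\leq 0$ and $\lambda_{\balpha} \leq \lambda_{\bbeta}$ is forced throughout $K$; if $|R| \geq 2$, one makes the right-hand side strictly positive by assigning some ${\underline \gamma} \in R \setminus \{\bbeta\}$ a large coordinate and everything in $L \setminus \{\balpha\}$ a small one. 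Matching the two trichotomies gives the equivalence for the left operation, and the right operation is immediate by the symmetric argument, with the roles of $\alpha, \beta$ and of left, right swapped.

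The delicate case is (b2): one cannot conclude that $\lambda_{\balpha} > \lambda_{\bbeta}$ is realizable from an abstract non-degeneracy of the linear functional, because the positivity restrictions defining $K$ can genuinely force one sign. The proof must hinge on the concrete dichotomy $|R| \geq 2$ versus $|R| = 1$, which happily coincides with the combinatorial condition for $R'$ to remain non-empty. Note that irreducibility of $\pi$ is not really used beyond the standing assumption that $\pi \in \sssigma$.
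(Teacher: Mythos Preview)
Your approach is correct and genuinely different from the paper's. The paper argues the forward implication by pulling back: given that the left operation yields $\pi'\in\sssigma$, pick any $\lambda'\in\R_+^{\BA}$ satisfying the balance equation for $\pi'$ and set $\lambda=B_\gamma^*\lambda'$, which automatically satisfies (\ref{leftright}) with $\lambda_{\balpha}>\lambda_{\bbeta}$. For the converse, it pushes forward to get a $\lambda'$ satisfying the balance for the candidate $\pi'$, deduces from positivity that $i(\AA'_l)$ cannot properly contain or be contained in $\AA'_r$, and rules out equality by a short set-theoretic argument. Your proof instead reduces both sides to the purely combinatorial invariants $L,R\subset\BA$, and matches the dichotomy ``left operation defined'' with ``$\lambda_{\balpha}>\lambda_{\bbeta}$ realizable in $K$'' case by case. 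Your route is more explicit and makes transparent exactly where the obstruction lies (namely $|R|=1$ in case (b2)); the paper's route is shorter and avoids the case split by exploiting the Rauzy matrix $B_\gamma$.

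There is one small gap. In your case~(a) you assert that the left operation is defined unconditionally and that $\lambda_{\balpha}>\lambda_{\bbeta}$ is always realizable since $\balpha\in M$. But case~(a) contains the degenerate situation $\beta=i(\alpha)$: then $\balpha=\bbeta$, the move ``put $\beta$ just after $i(\alpha)$'' is not even defined (this is excluded explicitly in the paper's definition of the left operation), and of course $\lambda_{\balpha}>\lambda_{\bbeta}$ is impossible. So in that sub-case both sides of the equivalence fail, and the equivalence still holds; you just need to say so before invoking your ``adjust $\lambda_{\balpha}$ freely'' argument, which only works when $\balpha\neq\bbeta$. With that one line added, your proof is complete.
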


\begin{proof}
Assume that the left operation is defined at $\pi$ and let $\pi'$ be the
image of $\pi$.  Let $\lambda' \in \R^\BA_+$ be a vector satisfying
$\sum_{\pi'(\xi)<\pi'(*)}
\lambda'_\bxi=\sum_{\pi'(\xi)>\pi'(*)} \lambda'_\bxi$.  Let $\lambda \in
\R^\BA_+$ be given by $\lambda_\balpha=\lambda'_\balpha+\lambda'_\bbeta$ and
$\lambda_\bxi=\lambda'_\bxi$, $\bxi \neq \balpha$.  Then $\lambda$ satisfies
(\ref {leftright}) and we have $\lambda_\balpha>\lambda_\bbeta$.

Assume that $\lambda_\balpha>\lambda_\bbeta$.  Let
$\lambda'_\balpha=\lambda_\balpha-\lambda_\bbeta$.  Let $\pi'(x)=\pi(x)$
for $\pi(x) \leq \pi(i(\alpha))$, $\pi'(\beta)=\pi(i(\alpha))+1$ and
$\pi'(x)=\pi(x)+1$ for $\pi(i(\alpha))<\pi(x)<2d+1$.  We need to show that
$\pi' \in \sssigma$.

Let $\AA_l=\{\pi(\xi)<\pi(*)\}$,
$\AA_r=\{\pi(\xi)>\pi(*)\}$, $\AA_l'=\{\pi'(\xi)<\pi'(*)\}$,
$\AA'_r=\{\pi'(\xi)>\pi'(*)\}$. 
Notice that $\sum_{\pi'(\xi)<\pi'(*)}
\lambda'_\bxi=\sum_{\pi'(\xi)>\pi'(*)} \lambda_\bxi'$, so
$i(\AA_l')$ can not be properly contained or properly
contain $\AA_r'$.  If $i(\AA_l')=\AA_r'$, then
$\pi'(i(\alpha))>\pi'(*)$, so $\pi(i(\alpha))>\pi(*)$ as well.  This implies
that $\AA_l'=\AA_l$ and $\AA_r'=\AA_r$, and since $\pi \in \sssigma$ we
have $i(\AA_l) \neq \AA_r$.
\end {proof}

\subsubsection{Linear action}

Given a Rauzy class $\RR$, we associate to 
each path $\gamma \in \Pi(\RR)$ a linear map 
$B_\gamma \in \SL(\BA,\Z)$. If $\gamma$ is a vertex we take 
$B_\gamma=\id$. If $\gamma$ is an arrow with winner $\alpha$ 
and loser $\beta$ then we define 
$B_\gamma \cdot e_{\bxi}=e_{\bxi}$ for 
$\bxi \in \BA \backslash \{\balpha\}$, 
$B_\gamma \cdot e_{\balpha}=e_{\balpha}+e_{\bbeta}$, 
where $\{e_{\bxi}\}_{\bxi \in \BA}$ is the canonical basis of 
$\R^{\BA}$. If $\gamma$ is a path, of the form $\gamma=\gamma_1 \ldots \gamma_n$, 
where $\gamma_i$ are arrows for all $i=1, \ldots, n$,
we take $B_{\gamma}=B_{\gamma_1 \ldots \gamma_n}=B_{\gamma_n} \ldots B_{\gamma_1}$. 

\subsection{Rauzy algorithm with involution}

Given  a Rauzy class $\RR \subset \sssigma$, consider the set
\begin{equation*}
\mathcal{S}_\pi=\left\{\lambda \in \R^{\BA}: \sum_{\pi(\alpha)<\pi(*)}\lambda_\balpha=\sum_{\pi(\alpha)>\pi(*)} \lambda_\balpha
\right\}
\end{equation*}
We define
\begin{equation*}
\mathcal{S}_\pi^+=\mathcal{S}_\pi \cap \R^{\BA}_+, \quad 
\Delta_\pi=\mathcal{S}_\pi^+ \times \{\pi\}, \quad
\Delta^0_\RR=\bigcup_{\pi \in \RR}\Delta_\pi.
\end{equation*}

Let $(\lambda,\pi)$ be an element of $\Delta^0_\RR$. 
We say that we can apply Rauzy algorithm with involution to 
$(\lambda,\pi)$ if $\lambda_\balpha \neq \lambda_\bbeta$, where 
$\alpha, \beta \in \AA$ are the leftmost and the rightmost elements 
of $\pi$, respectively.  Then we define $(\lambda',\pi')$ as follows:

\begin{enumerate}
\item Let $\gamma=\gamma(\lambda,\pi)$ be an arrow representing the left 
or the right operation at $\pi$, according to whether
$\lambda_\balpha>\lambda_\bbeta$ or $\lambda_\bbeta>\lambda_\balpha$.
\item Let $\lambda'_{\bxi}=\lambda_{\bxi}$ if $\bxi$ is not the class of the
winner of 
$\gamma$, and $\lambda'_{\bxi}=|\lambda_\balpha-\lambda_\bbeta|$ if $\bxi$ 
is the class of the winner of $\gamma$, i.e., $\lambda=B^*_\gamma \cdot \lambda'$
(here and in the following we will use the notation $A^*$ 
to the transpose of a matrix $A$).
\item Let $\pi'$ be the end of $\gamma$.
\end{enumerate}
We say that $(\lambda',\pi')$ is obtained from $(\lambda,\pi)$ 
by applying Rauzy algorithm with involution, 
of type left or right depending on whether the 
operation is left or right. We have $(\lambda',\pi') \in \Delta^0_\RR$. 
In this way we define a map $Q:(\lambda,\pi) \mapsto (\lambda',\pi')$ 
which is called \emph{Rauzy induction map with involution}. Its domain 
of definition is the set of all $(\lambda,\pi) \in \Delta^0_\RR$ such that 
$\lambda_\balpha \neq \lambda_\bbeta$ (where $\alpha$ and $\beta$ are the 
leftmost and the rightmost letters of $\pi$) and we denote it by 
$\Delta^1_\RR$. The connected components $\Delta_\pi \subset \Delta^0_\RR$
are naturally labeled by elements of $\RR$ and the connected components 
$\Delta_\gamma$ of $\Delta^1_\RR$ are naturally labeled by arrows, 
i.e., paths in $\Pi(\RR)$ of length $1$. 

We associate to $(\lambda,\pi)$ and to $(\lambda',\pi')$ two interval 
exchange transformations with involution $f:I \to I$ and $f':I' \to I'$, 
respectively. The relation between $(\lambda,\pi)$ and $(\lambda',\pi')$ 
implies a relation between the interval exchange transformations with 
involution, namely, the map $f'$ is the first return map of $f$ to a 
subinterval of $I$, obtained by cutting two subintervals 
from the beginning and from the end of $I$ with the same length 
$\lambda_\bxi$, where $\xi$ is the loser of $\gamma$.

Let $\Delta^n_\RR$ be the domain of $Q^n$, $n \geq 2$. The connected 
components of $\Delta^n_\RR$ are naturally labeled by paths in $\Pi(\RR)$ 
of length $n$: if $\gamma$ is obtained by following a sequence of arrows 
$\gamma_1,\ldots,\gamma_n$, then 
$\Delta_\gamma=\{x \in \Delta^0_\RR \tq Q^{k-1}(x) \in \Delta_{\gamma_k},\, 1 \leq k \leq n\}$. 
Notice that if $\gamma$ starts at $\pi$  and ends at $\pi'$ then 
$\Delta_\gamma=(B^*_\gamma \cdot \mathcal{S}_{\pi'}^+) \times \{\pi\}$.

If $\gamma$ is a path in $\Pi(\RR)$ of length $n$ ending at $\pi \in \RR$, let
\begin{equation*}
Q^\gamma=Q^n:\Delta_\gamma \to \Delta_\pi.
\end{equation*}
Let $\Delta^\infty_\RR=\bigcap_{n \geq 0} \Delta^n_\RR$.

\begin{definition}
A path $\gamma$ is said to be \emph{complete} if all involution classes 
$\balpha \in \BA$ are winners of some arrow composing $\gamma$.

The concatenation of $k$ complete paths is said to be \emph{$k$-complete}.

A path $\gamma \in \Pi(\RR)$ is \emph{positive} if $B_\gamma$ is given, 
in the canonical basis of $\R^{\BA}_+$, by a matrix with all entries positive.
\end{definition}

\begin{lemma} \label{complete}
A $(2d-3)$-complete path $\gamma \in \Pi(\RR)$ is positive.
\end{lemma}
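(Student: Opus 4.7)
The plan is to trace the supports of the rows of the partial products $M_k = B_{\gamma_1 \cdots \gamma_k}$ as we scan the arrows of $\gamma = \gamma_1 \cdots \gamma_n$ in order. Since $B_{\gamma_1 \cdots \gamma_n} = B_{\gamma_n} \cdots B_{\gamma_1}$ and $B_{\gamma_{k+1}} = I + e_{\bbeta_{k+1}} e_{\balpha_{k+1}}^{*}$, where $\balpha_{k+1}$ and $\bbeta_{k+1}$ are the winner and loser classes of arrow $\gamma_{k+1}$, multiplying on the left by $B_{\gamma_{k+1}}$ leaves row $\bxi$ of $M_k$ unchanged whenever $\bxi \ne \bbeta_{k+1}$ and replaces row $\bbeta_{k+1}$ by row $\bbeta_{k+1}$ plus row $\balpha_{k+1}$. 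Writing $T_\bxi(k) \subseteq \BA$ for the support of row $\bxi$ of $M_k$, we have $T_\bxi(0) = \{\bxi\}$ and the recursion $T_{\bbeta_{k+1}}(k+1) = T_{\bbeta_{k+1}}(k) \cup T_{\balpha_{k+1}}(k)$, with every other $T_\bxi$ unchanged. Positivity of $B_\gamma$ is precisely the statement that $T_\bxi(n) = \BA$ for every $\bxi \in \BA$.

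The core structural step is to show that in any complete path every class $\balpha \in \BA$ loses at least one arrow. The point is that the winner of a left (resp.\ right) operation retains its position as leftmost (resp.\ rightmost) letter of the new permutation; consequently, a class that wins in $\gamma$ but never loses would occupy the same extremal position of $\pi$ along the entire relevant portion of $\gamma$. Combining the characterization from the preceding lemma (applicability of the left/right operation at $\pi$ is equivalent to the existence of $\lambda$ with strict inequality between the lengths of the leftmost and rightmost letters) with the minimality defining the Rauzy class $\RR$, one rules this out. Therefore after a single complete block one has $|T_\bxi| \geq 2$ for every $\bxi \in \BA$.

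I would then proceed by induction on the number of consecutive complete blocks processed. In each new complete block every class wins (by completeness) and hence also loses (by the structural step), so each row's support is enlarged by the current support of some other row. A bookkeeping argument mirroring the classical Abelian Rauzy-class treatment of \cite{AGY} shows that $d-1$ completions suffice to force at least one row to reach full support $\BA$, after which each of the next $d-2$ completions forces one more row to absorb the full row through a loss against a class whose support already contains $\BA$; this gives the precise bound $2d-3$.

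The hard part is the structural step that completeness forces every class to lose. This is the one place where the specifics of the involution combinatorics enter: one must check that the irreducibility of $\sssigma$, constrained by the extra conditions $i(\AA_l) \not\subset \AA_r$ and $i(\AA_r) \not\subset \AA_l$, is compatible with the ``a permanent winner is impossible'' argument. Once this is secured the remaining counting is essentially a formal consequence of the support-growth recursion and runs identically to the Abelian case.
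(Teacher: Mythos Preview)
Your structural claim that ``in any complete path every class loses at least one arrow'' is false, and both halves of your induction rest on it. In ordinary (Abelian) Rauzy induction on three letters---the very setting of \cite{AGY} to which you appeal, and to which your row-support recursion applies verbatim---start at $(\pi_t,\pi_b)=(ACB,CBA)$ and take the path with arrow types top, bottom, top: the winners are $B,A,C$ (so the path is complete) while the losers are $A,B,A$, and $C$ never loses. In your notation $T_C$ remains $\{C\}$ after the whole block, so the conclusion $|T_\bxi|\ge 2$ already fails. Your justification breaks because a class that first wins late need not have been extremal earlier; it can sit one slot from the end, become extremal only when its neighbour loses, and then win immediately without ever having lost itself. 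The second half of your argument (``each further complete block forces another row to absorb a full row via a loss'') fails for the same reason: a class whose row is not yet full may simply never lose during that block.

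What the paper does instead is dual: fix $\balpha$ and grow the set $S=\{\bxi:\balpha\in T_\bxi\}$ of rows that already contain $\balpha$. Completeness gives an arrow where $\balpha$ wins, so $|S|\ge 2$ after one block. Thereafter, as long as $S\ne\BA$, use completeness to locate a first arrow with winner \emph{outside} $S$ (one block) and then the first subsequent arrow with winner back \emph{inside} $S$ (one more block). At that second arrow the winner has just changed, and in Rauzy induction this forces the new loser to equal the previous winner, which lies outside $S$; its row then absorbs a row already containing $\balpha$, enlarging $S$ by one. This costs $1+2(d-2)=2d-3$ blocks. The correct key observation is therefore not ``completeness forces every class to lose'' but rather ``whenever the winner changes from one arrow to the next, the new loser equals the old winner''.
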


\begin{proof}
Let $\gamma=\gamma^1\gamma^2\ldots\gamma^N$ where $\gamma^i$ is an 
arrow starting at $\pi^{i-1}$ and ending at $\pi^{i}$. 
Since $\gamma$ is $l$-complete we also can represent it as 
$\gamma=\gamma_{(1)}\gamma_{(2)}\ldots\gamma_{(l)}$ where each 
$\gamma_{(i)}$ is a complete path passing through $\pi^{(i-1)}_1, 
\ldots, \pi^{(i-1)}_{n(i)}, \pi^{(i)}_1$.

Let $B^*_{\gamma_{(i)}}$ be the matrix such that $\lambda_1^{(i)}= 
B^*_{\gamma_{(i)}} \cdot \lambda_1^{(i+1)}$. And let 
$B^*(\balpha, \bbeta, i)$ be the coefficient on row 
$\balpha$ and on column $\bbeta$ of the matrix $B^*_{\gamma_{(i)}}$.
Fix $k<l$. We denote 
$C^*(k)=B^*_{\gamma_{(1)}} \ldots B^*_{\gamma_{(k)}}$. Let 
$C^*(\balpha, \bbeta, k)$ be the coefficient on row 
$\balpha$ and on column $\bbeta$ of the matrix $C^*(k)$.
We want to prove that for all $\balpha, \bbeta \in \BA$ 
we have $C^*(\balpha, \bbeta, l)>0$.
For $r \geq 0$ denote $\widehat C(r)=B^*_{\gamma^1} \ldots B^*_{\gamma^r}$.

Since the diagonal elements of the matrices $B_{\tilde \gamma}^*$, where 
$\tilde \gamma$ is an arrow, are one and all other terms are non-negative 
integers, we obtain that the sequence $C^*(\balpha, \bbeta, k)$ is non-decreasing 
in $k$, thus:
\be\label{nondecreasing}
C^*(\balpha, \bbeta, k)>0 \Rightarrow C^*(\balpha, \bbeta, k+1)>0
\ee

Fix any $\balpha, \bbeta \in \BA$. We will reorder the involution classes of $\BA$, 
as $\balpha=\balpha_1, \balpha_2, \ldots, \balpha_d=\bbeta$ with associate 
numbers $0=r_1 < r_2 < \ldots< r_d$ such that
\be\label{positive}
C^*(\balpha_1, \balpha_j,r) >0 \; \text \quad \forall r \geq r_j
\ee

If $\balpha=\bbeta$ we take $s=1$ and $r_1=0$ and therefore 
we have (\ref{positive}).
Otherwise we choose the smallest positive integer $r_2$ such that the winner 
of $\gamma^{r_2}$ is $\balpha_1=\balpha$ and let $\balpha_2$ be the loser of the same 
arrow. Notice that $\balpha_1 \neq \balpha_2$ by irreducibility, and 
$B^*(\balpha_1, \balpha_2, r_2)=1$, hence $C^*(\balpha_1, \balpha_2, r)>0$ for 
every $r \geq r_2$. This gives the result for $d=2$.

Now we will see the general case. Assume that 
$\balpha_1, \ldots, \balpha_j$ and $r_1, \ldots, r_j$ 
have been constructed with $\bbeta \neq \balpha_m$ for $1 \leq m \leq j$. 
Let $r'_j$ be the smallest integer greater than $r_j$ such that 
the winner of $\gamma^{r'_j}$ does not belong to 
$\{ \balpha_1, \ldots, \balpha_j \}$ and let $r_{j+1}$ be the smallest 
integer greater than $r'_j$ such that the winner of 
$\gamma^{r_{j+1}}$ is in $\{ \balpha_1, \ldots, \balpha_j \}$. 
Let $\balpha_{j+1}$ be the loser of $\gamma^{r_{j+1}}$. Then 
$\balpha_{j+1}$ is the winner of $\gamma^{r_{j+1}-1}$ and therefore 
$\balpha_{j+1} \notin \{ \balpha_1, \ldots, \balpha_j \}$.
Thus, for some $1 \leq m \leq j$ we have 
$B^*(\balpha_m, \balpha_{j+1}, r_{j+1})=1$ and 
$C^*(\balpha_1, \balpha_m, r_{j+1}-1)>0$, since 
$r_{j+1}>r_m$. Thus 
\begin{equation*}
C^*(\balpha_1, \balpha_{j+1}, r)>0 \quad \forall r \geq r_{j+1}.
\end{equation*}

Following this process, we will obtain $\balpha_s=\bbeta$ 
Now, we will see how many complete paths we need until define $r_d$.

We need a complete path to define each $r_j$ and another one to 
define each $r'_j$, for $3\leq j \leq d$. And we need another complete 
path to define $r_2$. Thus we need at most $2(d-2)+1=2d-3$ complete paths 
composing $\gamma$ to conclude it is positive.
\end{proof}

\subsection{Zippered rectangles} \label {recall}

Let $\pi$ be a permutation in a Rauzy class $\RR \subset \ssigma$. 
Let $\Ga_\pi \subset \mathcal{S}_\pi$ be the set of all $\tau$ such that

\be\label{taudomain}
\begin{array}{c}
\displaystyle \sum_{\pi(*) < \pi(\xi) \leq k_r} 
\tau_\bxi>0 \quad\text{for all}\quad \pi(*) < k_r <2d+1 \\
\displaystyle \sum_{k_l \leq \pi(\xi) < \pi(*)} 
\tau_\bxi<0 \quad\text{for all }\quad 1 < k_l <\pi(*) 
\end{array}
\ee

%\begin{figure}[h]
%\begin{center}
% \input taudomain.pdf_t
%\caption{Representation of a possible suspension of $(\lambda,\pi,\tau)$ where 
%$\pi=(D \; i(B) \; i(D) \; C \; i(C) \; * \; A \; i(A) \; B)$} \label{fig1}
%\end{center}
%\end{figure}

Observe that $\Ga_\pi$ is an open convex polyhedral cone and we 
will see later it is non-empty.

Given a letter $\alpha \in \AA$, we define 
$M_\pi(\alpha)=\max \{\pi(\alpha), \pi(i(\alpha))\}$ and 
$m_\pi(\alpha)=\min \{\pi(\alpha), \pi(i(\alpha))\}$.
%Given $\balpha, \bbeta \in \BA$ we say that $\bbeta < \balpha$ or 
%$\balpha > \bbeta$ if and only if $M_\pi(\beta)<m_\pi(\alpha)$. 
%And we say that $\bbeta <\sim \balpha$ or $\balpha >\sim \bbeta$ 
%if and only if $m_\pi(\beta)<m_\pi(\alpha)<M_\pi(\beta)<M_\pi(\alpha)$. 

Define the linear operator $\Omega(\pi)$ on $\R^{\BA}$ as follows:

\be\label{defOmega}
\Omega(\pi)_{\bx,\by}=
\left \{ \begin{array}{rl}
2 & \text{if} \quad   M_\pi(x)<m_\pi(y)
,\\[5pt]
-2 & \text{if} \quad  M_\pi(y)<m_\pi(x)
,\\[5pt]
1 & \text{if} \quad   m_\pi(x)<m_\pi(y)<M_\pi(x)<M_\pi(y)
,\\[5pt]
-1 & \text{if} \quad  m_\pi(y)<m_\pi(x)<M_\pi(y)<M_\pi(x)
,\\[5pt]
0 & \text {otherwise}.
\end{array}
\right .
\ee

%If $\lambda \in \R^{\BA}$, 
%then 
%\be \label{translation vector}
%\left(\Omega(\pi) \cdot %\lambda\right)_\alpha=\sum_{\pi(\xi)>\pi(i(\alpha))}\lambda_\xi-
%\sum_{\pi(\xi)<\pi(\alpha)}\lambda_\xi.
%\ee
%Suppose that $\pi(\alpha)<\pi(i(\alpha))$. Thus,
%\begin{multline}\label{winvolution}
%\displaystyle \left(\Omega(\pi) \cdot \lambda\right)_{i(\alpha)}=
%\sum_{\pi(\xi)>\pi(\alpha)}\lambda_\xi-
%\sum_{\pi(\xi)<\pi(i(\alpha))}\lambda_\xi= \\
%\displaystyle
%\sum_{\pi(\alpha)<\pi(\xi) \leq \pi(i(\alpha))}\lambda_\xi+
%\sum_{\pi(\xi)>\pi(i(\alpha))}\lambda_\xi-
%\sum_{\pi(\alpha) \leq \pi(\xi)<\pi(i(\alpha))}\lambda_\xi-
%\sum_{\pi(\xi)<\pi(\alpha)}\lambda_\xi= \\ 
%=\left(\Omega(\pi) \cdot \lambda\right)_\alpha.
%\end{multline}
%The case $\pi(i(\alpha))<\pi(\alpha)$ is analogous. So, we 
%conclude that $\left(\Omega(\pi) \cdot \lambda\right)_{\alpha}=\
%\left(\Omega(\pi) \cdot \lambda\right)_{i(\alpha)}$ 
%for all $\alpha \in \AA$. Therefore 
Observe that $\Omega(\pi)$ is well-defined.

We define the vector $w \in \R^{\BA}$ by $w=\Omega(\pi) \cdot \lambda$ 
and the vector $h \in \R^{\BA}$ by $h=-\Omega(\pi) \cdot \tau$. 
For each $\balpha \in \BA$ define 
$\zeta_\balpha = \lambda_\balpha + i \tau_\balpha$.

\begin{lemma}\label{translation}
If $\gamma$ is an arrow between $(\lambda,\pi)$ and $(\lambda',\pi')$, 
then $w'=B_\gamma \cdot w$. 
\end{lemma}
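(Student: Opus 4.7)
The plan is to deduce the claim from the operator identity
$$
\Omega(\pi') = B_\gamma \, \Omega(\pi) \, B_\gamma^*.
$$
Once this is established, combining it with the relation $\lambda = B_\gamma^* \lambda'$ recorded in the definition of Rauzy induction yields at once $w' = \Omega(\pi') \lambda' = B_\gamma \Omega(\pi) B_\gamma^* \lambda' = B_\gamma \Omega(\pi) \lambda = B_\gamma w$, which is the desired conclusion. Thus the whole task is reduced to verifying the identity of skew-symmetric matrices displayed above.

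The effect of conjugation by $B_\gamma$ on an arbitrary matrix is transparent. Writing $\alpha$ for the winner and $\beta$ for the loser of $\gamma$, the map $B_\gamma$ is the elementary unipotent sending $e_\balpha$ to $e_\balpha + e_\bbeta$ and fixing the other basis vectors. A direct computation from this description shows that $(B_\gamma \Omega(\pi) B_\gamma^*)_{\bxi,\by}$ equals $\Omega(\pi)_{\bxi,\by}$ whenever $\bxi, \by \neq \bbeta$; equals $\Omega(\pi)_{\bbeta,\by} + \Omega(\pi)_{\balpha,\by}$ when $\bxi = \bbeta$ and $\by \neq \bbeta$, with the antisymmetric counterpart when $\by = \bbeta$ and $\bxi \neq \bbeta$; and that the $(\bbeta,\bbeta)$ entry reduces to $\Omega(\pi)_{\balpha,\bbeta} + \Omega(\pi)_{\bbeta,\balpha} = 0$ by antisymmetry of $\Omega$. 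Hence the operator identity is equivalent to a finite list of entrywise identities.

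It remains to verify those identities for the specific $\pi'$ produced by Rauzy induction. I would treat the left arrow case in detail, where $\pi'$ is obtained from $\pi$ by moving $\beta$ from position $2d+1$ to position $\pi(i(\alpha)) + 1$; the right arrow case is handled by a mirror argument, or by invoking the symmetry $\pi \mapsto \overline\pi$ exchanging east and west. The key combinatorial observation is that this relocation of $\beta$ preserves the relative orderings of all pairs of positions that do not involve $\beta$: for letters $x, y$ with $x, y, i(x), i(y)$ all distinct from $\beta$, the quantities $m_{\pi'}(x), M_{\pi'}(x), m_{\pi'}(y), M_{\pi'}(y)$ appear in the same relative order as their $\pi$-analogues. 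From \eqref{defOmega} this immediately gives $\Omega(\pi')_{\bxi,\by} = \Omega(\pi)_{\bxi,\by}$ for $\bxi, \by \neq \bbeta$. The remaining identities, concerning the $\bbeta$-row of $\Omega(\pi')$, follow from a direct case analysis comparing the new position $\pi(i(\alpha)) + 1$ of $\beta$ with $m_\pi(y), M_\pi(y)$ and with $M_\pi(\beta), m_\pi(\beta) = \pi(i(\beta))$, using that $\pi(\alpha) = 1$ so $m_\pi(\alpha) = 1$.

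The main obstacle is the bookkeeping in that last case analysis: one has to split on whether $i(\beta)$ lies to the left or right of $i(\alpha)$ in the row representing $\pi$, and on where $m_\pi(y), M_\pi(y)$ fall with respect to the markers $\pi(i(\alpha))$, $\pi(i(\beta))$ and $2d+1$. No conceptual difficulty arises, and the argument is a direct extension of the analogous entrywise verification in the classical Rauzy induction setting (where $\Omega$ has only the simpler entries $\pm 1, 0$), the extra values $\pm 2$ here accounting precisely for the cases in which both $y$ and $i(y)$ lie on the same side of $\{\beta, i(\beta)\}$ in the row.
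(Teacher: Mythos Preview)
Your approach is correct but takes a genuinely different route from the paper. You reduce the lemma to the operator identity $\Omega(\pi') = B_\gamma\,\Omega(\pi)\,B_\gamma^*$ and then verify that identity entry by entry, splitting on whether the row and column indices equal $\bbeta$. The paper instead works directly with the coordinates of $w$: it writes
\[
w_\balpha \;=\; \sum_{\pi(\xi)>\pi(i(\alpha))} \lambda_\bxi \;-\; \sum_{\pi(\xi)<\pi(\alpha)} \lambda_\bxi
\]
(which is just an unpacking of $(\Omega(\pi)\lambda)_\balpha$), substitutes the relations $\lambda'_\bxi = \lambda_\bxi$ for $\bxi \neq \balpha$ and $\lambda'_\balpha = \lambda_\balpha - \lambda_\bbeta$, and checks that $w'_\bxi = w_\bxi$ for $\bxi \neq \bbeta$ while $w'_{\bbeta} = w_{\balpha} + w_{\bbeta}$, which is precisely the statement $w' = B_\gamma \cdot w$.

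The paper's argument is shorter because it never separates the contributions to $\Omega_{\bxi,\by}$ by the five cases in \eqref{defOmega}; the sum formula absorbs all of that. Your route yields the stronger operator identity (useful in its own right, and the analogue of the classical formula for ordinary Rauzy induction), but the price is the case analysis on the $\bbeta$-row that you correctly flag as the main bookkeeping burden. Note that strictly speaking only the restriction of the operator identity to $\SS_{\pi'}$ is needed for the lemma, and your outline would in fact establish it on all of $\R^\BA$; either way the deduction of $w' = B_\gamma w$ goes through.
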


\begin{proof}
We will consider the case when $\gamma$ is a left arrow. The other case is 
entirely analogous. Let $\balpha(l)$ and $\balpha(r)$ be the leftmost and 
the rightmost letters in $\pi$, respectively. Thus $\balpha(l)$ is the 
winner and $\balpha(r)$ is the loser.

By definition,
\begin{equation*}
 w'_\balpha= \sum_{\pi'(\xi)>\pi'(i(\alpha))}\lambda'_\xi-
\sum_{\pi'(\xi)<\pi'(\alpha)}\lambda'_\xi
\end{equation*}
Since $\lambda'_\balpha=\lambda_\balpha$ for all $\balpha \neq \balpha(l)$ 
and $\lambda'_{\balpha(l)}=\lambda_{\balpha(l)}-\lambda_{\balpha(r)}$, it is 
easy to verify that:
\begin{equation*}
 w'_\balpha = \sum_{\pi(\xi)>\pi(i(\alpha))}\lambda_\xi-
\sum_{\pi(\xi)<\pi(\alpha)}\lambda_\xi=w_\balpha 
\quad \text{if} \quad \balpha \neq \balpha(r).
\end{equation*}
And if $\balpha = \balpha(r)$, 
%by (\ref{winvolution}), 
we have:
\be \nonumber
\begin{array}{cc}
\begin{aligned}
\displaystyle
 & w'_{\balpha(r)} = \sum_{\pi'(\xi)>\pi'(i(\alpha(r)))}\lambda'_\xi-
\sum_{\pi'(\xi)<\pi'(\alpha(r))}\lambda'_\xi
\\
\displaystyle
 & = \sum_{\pi'(\xi)>\pi'(\alpha(r))}\lambda'_\xi-
\sum_{\pi'(\xi)<\pi'(i(\alpha(r)))}\lambda'_\xi
= \sum_{\pi(\xi)>\pi(i(\alpha(l)))}\lambda_\xi-
\sum_{\pi(\xi)<\pi(i(\alpha(r)))}\lambda_\xi
\\
\displaystyle
 & = \sum_{\pi(\xi)>\pi(i(\alpha(l)))}\lambda_\xi-
\sum_{\pi(\xi)<\pi(\alpha(l))}\lambda_\xi+
\sum_{\pi(\xi)>\pi(\alpha(r))}\lambda_\xi-
\sum_{\pi(\xi)<\pi(i(\alpha(r)))}\lambda_\xi
= w_{\balpha(l)}+w_{\balpha(r)}
\end{aligned}
\end{array}
\ee

Therefore, $w'=B_\gamma \cdot w$.
\end{proof}

Let $H(\pi)=\Omega(\pi)\cdot \SS_\pi$. According to the previous lemma, given 
a path $\gamma \in \Pi(\RR)$ starting at $\pi$ and ending at $\pi'$,
we have $B_\gamma \cdot H(\pi)=H(\pi')$.

\begin{lemma}\label{hpositive}
If $\pi \in \RR$ and $\tau \in \Ga_\pi$ then $h \in \R_+^{\BA}$.
\end{lemma}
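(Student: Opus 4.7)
The plan is to express $h_\balpha$ as an explicit signed sum of coordinates $\tau_{\bxi_k}$ over positions in the row, and then read off positivity directly from the defining inequalities of $\Ga_\pi$. The irreducibility hypothesis $\pi\in\RR$ will play a role only in one degenerate configuration where the resulting formula is inconclusive.

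First I would set $k_1=m_\pi(\alpha)$, $k_2=M_\pi(\alpha)$, $p=\pi(*)$, and write $\xi_k$ for the letter of $\AA$ at position $k$. Inspecting the five cases in \eqref{defOmega}, one sees that for each class $\bbeta$ the coefficient $-\Omega(\pi)_{\balpha,\bbeta}$ splits additively over the two positions $m_\pi(\beta)$ and $M_\pi(\beta)$: each position contributes $+1$ if it is strictly less than $k_1$, $-1$ if strictly greater than $k_2$, and $0$ in between. Summing over $\bbeta$ yields
\[
h_\balpha \;=\; \sum_{k<k_1,\,k\neq p}\tau_{\bxi_k}\;-\;\sum_{k>k_2,\,k\neq p}\tau_{\bxi_k} \;=\; T(k_1)+T(k_2+1)-2S,
\]
where $T(j):=\sum_{k<j,\,k\neq p}\tau_{\bxi_k}$ and $S:=\sum_{k<p}\tau_{\bxi_k}=\sum_{k>p}\tau_{\bxi_k}$ by $\tau\in\SS_\pi$. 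The defining inequalities \eqref{taudomain} of $\Ga_\pi$ are equivalent to $T(j)>S$ for every $j\in\{2,\ldots,2d+1\}\setminus\{p,p+1\}$, while the boundary values are forced: $T(1)=0$, $T(p)=T(p+1)=S$ and $T(2d+2)=2S$. In the generic range $k_1\notin\{1,p+1\}$ and $k_2\notin\{p-1,2d+1\}$, both summands strictly exceed $S$, and $h_\balpha>0$ follows immediately.

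The remaining boundary configurations are handled by sharpening. The key observation is that when $k_1$ or $k_2$ hits an exceptional index, the letter at that position is $\alpha$ or $i(\alpha)$, so $\tau_{\bxi_{k_1}}$ (or $\tau_{\bxi_{k_2}}$) equals $\tau_\balpha$. Combining this with the recursion $T(k+1)=T(k)+\tau_{\bxi_k}$ and the $\Ga_\pi$-inequalities at neighbouring indices produces sign information on $\tau_\balpha$ and on $S$: for example, $T(2)>S$ gives $\tau_\balpha>S$ when $k_1=1$, and $T(p-1)>S$ gives $\tau_\balpha<0$ when $k_2=p-1$; a comparison of this sort yields $h_\balpha>0$ in each boundary sub-case. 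The single genuinely degenerate configuration is $k_1=1$ and $k_2=2d+1$, in which the formula collapses to $h_\balpha=0$; here $\alpha$ is the leftmost and $i(\alpha)$ the rightmost letter of $\pi$, so both conditions $\beta\neq i(\alpha)$ for the left operation and $\alpha\neq i(\beta)$ for the right operation reduce to $\alpha\neq\alpha$ and fail, leaving $\pi$ with no outgoing arrow in the Rauzy diagram and contradicting $\pi\in\RR$. The main obstacle in the plan is the bookkeeping of the boundary cases: each of the handful of sub-cases requires pairing a specific pair of inequalities from \eqref{taudomain} with the recursion at the exceptional position to upgrade $T(k_1)+T(k_2+1)-2S\geq 0$ to a strict inequality.
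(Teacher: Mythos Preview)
Your approach is correct and is essentially the same idea as the paper's: both compute $h_\balpha$ as an explicit signed sum of the $\tau$-coordinates over positions in the row and then read off positivity from the defining inequalities of $\Ga_\pi$. The paper organizes the computation by splitting into three cases according to whether both positions of $\balpha$ lie left of $*$, both right, or one on each side, and in each case rewrites $h_\balpha$ as a sum of two partial sums each of which is controlled directly by a single inequality from \eqref{taudomain}. Your packaging via $T(j)$ and the identity $h_\balpha=T(k_1)+T(k_2+1)-2S$ is a uniform reformulation of the same computation; the price is that the boundary indices $k_1\in\{1,p+1\}$, $k_2\in\{p-1,2d+1\}$ must be treated separately, whereas in the paper's three-case split these boundaries are absorbed automatically by a judicious choice of which of $\alpha,i(\alpha)$ plays the role of ``$\alpha$'' in the displayed formula.

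One small point: your appeal to irreducibility for the degenerate case $k_1=1$, $k_2=2d+1$ is not needed, and the justification you give (no outgoing arrow contradicts $\pi\in\RR$) leans on a reading of the Rauzy-class definition that is not entirely explicit in the paper. A cleaner way, consistent with the rest of your argument, is to observe that $T(2)>S$ gives $\tau_\balpha>S$ while $T(2d+1)>S$ gives $\tau_\balpha<S$ (using $3\le\pi(*)\le 2d-1$, which follows from $\pi\in\sssigma$), so the hypothesis $\tau\in\Ga_\pi$ is vacuous in this configuration. The paper's proof likewise never uses irreducibility beyond $\pi\in\sssigma$.
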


\begin{proof}
Let $\alpha \in \AA$. We have
\begin{equation*}
h_\balpha=\sum_{\pi(\xi)<\pi(\alpha)}\tau_\xi-
\sum_{\pi(\xi)>\pi(i(\alpha))}\tau_\xi.
\end{equation*}

Suppose $\pi(\alpha), \pi(i(\alpha)) < \pi(*)$:

\begin{multline}\nonumber
\displaystyle h_\balpha=\sum_{\pi(\xi)<\pi(\alpha)}\tau_\xi-
\sum_{\pi(\xi)>\pi(i(\alpha))}\tau_\xi= 
\sum_{\pi(\xi)<\pi(\alpha)}\tau_\xi-
\sum_{\pi(i(\alpha))<\pi(\xi)<\pi(*)}\tau_\xi-
\sum_{\pi(*)<\pi(\xi)\leq 2d+1}\tau_\xi=\\
=-\sum_{\pi(\alpha)\leq \pi(\xi) < \pi(*)}\tau_\xi-
\sum_{\pi(i(\alpha))<\pi(\xi)<\pi(*)}\tau_\xi > 0.
\end{multline}

Analogously, if $\pi(\alpha), \pi(i(\alpha)) > \pi(*)$ we have 
$h_\balpha > 0$.

Now we will suppose that $\pi(\alpha) < \pi(*) < \pi(i(\alpha))$. In 
this case, we have:
%\begin{equation*} 
%\begin{array}{c}
\begin{multline*}
\displaystyle h_\balpha=\sum_{\pi(\xi)<\pi(\alpha)}\tau_\xi-
\sum_{\pi(\xi)>\pi(i(\alpha))}\tau_\xi=\\
\displaystyle 
=\sum_{1 \leq \pi(\xi)<\pi(*)}\tau_\xi-
\sum_{\pi(\alpha) \leq \pi(\xi)<\pi(*)}\tau_\xi-
\sum_{\pi(*)< \pi(\xi) \leq 2d+1}\tau_\xi+
\sum_{\pi(*)< \pi(\xi) \leq \pi(i(\alpha))}\tau_\xi=\\
\displaystyle
%-\sum_{\pi(\alpha) \leq \pi(\xi)<\pi(*)}\tau_\xi+
%\sum_{\pi(*)< \pi(\xi) \leq \pi(i(\alpha))}\tau_\xi=
=-\sum_{\pi(\alpha) < \pi(\xi)<\pi(*)}\tau_\xi+
\sum_{\pi(*)< \pi(\xi) < \pi(i(\alpha))}\tau_\xi > 0
\end{multline*}
%\end{array}
%\end{equation*}

So, $h \in \R_+^{\BA}$.
\end{proof}

\begin{lemma}\label{Ga_invariant}
If $\gamma \in \Pi(\RR)$ is an arrow starting at $\pi$ and ending
at $\pi'$ then $(B_\gamma^*)^{-1} \cdot \Ga_{\pi} \subset \Ga_{\pi'}$.
\end{lemma}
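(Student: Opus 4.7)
My plan is to verify the inclusion by unpacking how the partial sums defining $\Ga_{\pi'}$ relate to those defining $\Ga_\pi$. I treat the case where $\gamma$ is a left arrow in detail; the right-arrow case is entirely analogous by the reversal symmetry. Let $\alpha$ and $\beta$ denote the leftmost and rightmost letters of $\pi$ (so $\pi(\alpha)=1$, $\pi(\beta)=2d+1$) and set $p=\pi(i(\alpha))$. The matrix formula gives $\tau' := (B_\gamma^*)^{-1}\tau$ with $\tau'_{\balpha} = \tau_{\balpha} - \tau_{\bbeta}$ and $\tau'_{\bxi} = \tau_{\bxi}$ for $\bxi \neq \balpha$. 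A direct bookkeeping (splitting off positions $1$ and $p$, where the two letters in class $\balpha$ sit, and position $2d+1$, where $\beta$ sits) confirms $\tau' \in \SS_{\pi'}$.

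Next I would compare positions in $\pi'$ with those in $\pi$: positions $1,\dots,p$ keep their letters, position $p+1$ now holds $\beta$, and positions $p+2,\dots,2d+1$ are shifted by one from the $\pi$-positions $p+1,\dots,2d$. Writing $S'_k=\sum_{\pi'(*)<\pi'(\xi)\le k}\tau'_\bxi$ and analogously $T'_k$ for the left partial sums, a case split on whether $p>\pi(*)$ or $p<\pi(*)$ shows that for every relevant index $k$ except one boundary value, $S'_k$ or $T'_k$ equals some $S_m$ or $T_m$ of $\tau$, and the required strict sign is immediate from $\tau \in \Ga_\pi$. The only delicate identities left are $S'_p = S_p - \tau_\bbeta$ (when $p>\pi(*)$) and $T'_{p+1} = T_{p+1} + \tau_\bbeta$ (when $p<\pi(*)$).

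For these exceptional sums I would invoke Lemma \ref{hpositive}, which yields $h_\balpha > 0$. Since $\pi(\alpha)=1$, the definition of $h_\balpha$ collapses to $h_\balpha = -\sum_{\pi(\xi)>p}\tau_\bxi$; combining with the normalisation $T_1 = S_{2d+1} = S_{2d}+\tau_\bbeta$ coming from $\tau\in\SS_\pi$, one obtains $h_\balpha = S_p - S_{2d} - \tau_\bbeta$ in the case $p>\pi(*)$, and $h_\balpha = -T_{p+1} - S_{2d} - \tau_\bbeta$ in the case $p<\pi(*)$. Using $S_{2d}>0$ (the $\Ga_\pi$ inequality at $k_r=2d$), these rearrange to $S_p - \tau_\bbeta = h_\balpha + S_{2d} > 0$ and $T_{p+1} + \tau_\bbeta = -h_\balpha - S_{2d} < 0$, closing both exceptional cases.

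The main obstacle is spotting the algebraic identity linking the exceptional partial sums of $\tau'$ to the coordinate $h_\balpha$ of the height vector; once this is in hand, positivity of $h$ (Lemma \ref{hpositive}) together with the $\Ga_\pi$ inequalities finishes the argument, and the remaining cases reduce to routine relabelings of sums already controlled by the hypothesis.
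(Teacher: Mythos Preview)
Your proposal is correct and follows essentially the same route as the paper's own proof: a case split on the position of $i(\alpha)$ relative to $*$, direct identification of all but one of the partial sums of $\tau'$ with partial sums of $\tau$, and handling the single exceptional sum via the identity linking it to $h_\balpha$ together with one of the defining inequalities of $\Ga_\pi$. The only cosmetic difference is that you treat the left-arrow case while the paper treats the right-arrow case; your identity $S'_p-\tau_\bbeta=h_\balpha+S_{2d}$ (resp.\ $T'_{p+1}+\tau_\bbeta=-h_\balpha-S_{2d}$) is the exact mirror of the paper's $T'_m=T_2-h_\balpha$.
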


\begin{proof}
We will suppose that $\gamma$ is a right arrow and 
the other case is entirely analogous.
Let $\tau \in \Ga_\pi$ and let $\balpha \in \BA$ be the winner and 
$\bbeta \in \BA$ be the loser of $\gamma$.

Notice we have $\tau'_\bxi=\tau_\bxi$ for all $\bxi \in \BA \backslash \{\balpha\}$.
Let $m=\pi(i(\alpha))$. Notice that
\be \label{h}
h_\balpha=h_{i(\alpha)}=\sum_{\pi(\xi)<\pi(i(\alpha))}\tau_\xi-
\sum_{\pi(\xi)>\pi(\alpha)}\tau_\xi=
\sum_{\pi(\xi)<m}\tau_\xi
\ee

Suppose $m < \pi(*)$. Since $\pi'(\xi)=\pi(\xi)$ for all $\xi \in \AA$ 
such that $\pi(\xi)\geq m$ we have that the first inequalities 
of (\ref{taudomain}) are satisfied and
\be \nonumber
\sum_{k_l \leq \pi'(\xi) < \pi'(*)} \tau'_\bxi = 
\sum_{k_l \leq \pi(\xi) < \pi(*)} \tau_\bxi 
< 0 \quad\text{for all }\quad m < k_l <\pi(*)
\ee
Thus, it remains to prove the last inequalities to 
$1< k_l \leq m$. 
Let $1< k_l < m$. Since $\tau'_\balpha=\tau_\balpha-\tau_\bbeta$,
\be \nonumber
\sum_{k_l \leq \pi'(\xi) < \pi'(*)} \tau'_\bxi = 
\sum_{k_l \leq \pi(\xi) < \pi(*)} \tau_\bxi
< 0 \quad\text{for all}\quad 1 < k_l <m.
\ee
If $k_l=m$, by (\ref{h})
\be\label{winnerposition}
\sum_{m \leq \pi'(\xi) < \pi'(*)} \tau'_\bxi = 
\sum_{m \leq \pi(\xi) < \pi(*)} \tau_\bxi - \tau_\bbeta=
\sum_{2 \leq \pi(\xi) < \pi(*)} \tau_\bxi - h_\balpha
< 0
\ee

Now suppose $m > \pi(*)$
This case is analogous to the first one. We will just do 
the part corres\-ponding to (\ref{winnerposition}).
\be \nonumber
\sum_{\pi'(*) < \pi'(\xi) \leq m-1} \tau'_\bxi = 
\sum_{\pi(*) < \pi(\xi) \leq m-1 } \tau_\bxi - \tau_\bbeta=
h_\balpha-\sum_{2 \leq \pi(\xi) < \pi(*)} \tau_\bxi
> 0
\ee

Thus $\tau' \in \Ga_{\pi'}$, as we wanted to prove.
\end{proof}

\begin{definition}
Let $\Ga'_\pi \subset \mathcal{S}_\pi$ be the set of all $\tau \neq 0$
such that
\be\label{tauclosure}
\begin{array}{c}
\displaystyle \sum_{\pi(*) < \pi(\xi) \leq k_r}
\tau_\bxi \geq 0 \quad\text{for all}\quad \pi(*) < k_r <2d+1 \\
\displaystyle \sum_{k_l \leq \pi(\xi) < \pi(*)}
\tau_\bxi \leq 0 \quad\text{for all }\quad 1 < k_l <\pi(*)
\end{array}
\ee
\end{definition}

Let $\gamma \in \Pi(\RR)$ be a path starting at $\pi_s$ and ending
at $\pi_e$.  In the same way we showed that
$(B_\gamma^*)^{-1}
\cdot \Ga_{\pi_s} \subset \Ga_{\pi_e}$ in the previous lemma, one sees
that $(B_\gamma^*)^{-1}
\cdot \Ga'_{\pi_s} \subset \Ga'_{\pi_e}$.

\begin{definition}
Let $\pi \in \Pi(\RR)$ and $\alpha \in \AA$. We say that $\alpha$ is
a \emph{simple letter} if $\pi(\alpha) < \pi(*) < \pi(i(\alpha))$ or if
$\pi(i(\alpha)) < \pi(*) < \pi(\alpha)$. We say that $\alpha$
is a \emph{double letter} if $\pi(\alpha)$ and $\pi(i(\alpha))$ are either
both smaller or either both greater than $\pi(*)$. If 
$\pi(\alpha), \pi(i(\alpha)) < \pi(*)$ we say that $\alpha$
is a \emph{left double letter} or has \emph{left type}, otherwise we 
say that $\alpha$ is a \emph{right double letter} or has \emph{right type}. 
\end{definition}

\begin{lemma}\label{Theta'}
If $\pi$ is irreducible then $\Ga'_\pi$ is non-empty.
\end{lemma}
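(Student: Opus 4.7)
My plan is to exhibit $\tau \in \Ga'_\pi$ explicitly, by a case analysis on the leftmost letter $\alpha = \pi^{-1}(1)$ and the rightmost letter $\beta = \pi^{-1}(2d+1)$.

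When $\alpha$ is \emph{simple}, meaning $\pi(i(\alpha)) > \pi(*)$, I set $\tau_\balpha = 1$ and $\tau_\bxi = 0$ for every $\bxi \neq \balpha$. Then $\balpha$ has exactly one representative on each side of $*$, contributing $1$ to each side of \eqref{leftright}, so $\tau \in \mathcal{S}_\pi$. Each partial sum $\sum_{k_l \leq \pi(\xi) < \pi(*)} \tau_\bxi$ for $1 < k_l < \pi(*)$ vanishes (the only nonzero term is at position $1$, outside the range), and each partial sum $\sum_{\pi(*) < \pi(\xi) \leq k_r} \tau_\bxi$ equals $0$ or $1$ depending on whether $k_r \geq \pi(i(\alpha))$; both inequalities in \eqref{tauclosure} hold, and $\tau \neq 0$. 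Symmetrically, $\tau_\bbeta = -1$ (other components zero) exhibits an element of $\Ga'_\pi$ when $\beta$ is simple.

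It remains to treat the case where $\alpha$ is left-double and $\beta$ is right-double. Let $q = \pi(i(\alpha))$ and $r = \pi(i(\beta))$, so $1 < q < \pi(*) < r < 2d+1$. I will use irreducibility to produce either (a) a left-double class $\underline{\delta} \neq \balpha$ with $\max(\pi(\delta), \pi(i(\delta))) > q$, or (b) a right-double class $\underline{\gamma} \neq \bbeta$ with $\min(\pi(\gamma), \pi(i(\gamma))) < r$. In case (a), take $\tau_\balpha = 1$, $\tau_{\underline{\delta}} = -1$, and $\tau_\bxi = 0$ otherwise: both left and right totals vanish (so $\tau \in \mathcal{S}_\pi$ with total $T = 0$), and tracking $F(k) := \sum_{\pi(\xi) \leq k} \tau_\bxi$, on the left $F$ takes only values in $\{0,1,2\}$ (the hypothesis that at least one $\underline\delta$-position is greater than $q$ prevents $F$ from dipping to $-1$ before reaching $F(\pi(*)-1)=0$), while on the right $F\equiv 0$; hence every inequality of \eqref{tauclosure} is satisfied. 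Case (b) is entirely symmetric.

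The main obstacle is the existence claim for this auxiliary class under irreducibility. Since $\pi$ is irreducible but both boundary letters are double, at least one Rauzy operation must be defined at $\pi$; otherwise $\{\pi\}$ would be a singleton ``Rauzy class'' with no arrows, hence no winning representatives of any involution class, contradicting the definition. Analyzing precisely when each Rauzy operation is defined at $\pi$ (by the condition that the resulting row belong to $\sssigma$, as in the computation that established Lemma \ref{Ga_invariant}), one shows that the associated extra double class cannot violate the position constraint of case (a) or case (b): if right op is defined, one obtains an extra left-double $\underline\delta$ whose positions must straddle $q$ in the required manner, and symmetrically for left op. This combinatorial verification, tracing through the possible positions of the extra double letters, is the core difficulty of the proof.
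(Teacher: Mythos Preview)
Your first two cases (one of the boundary letters simple) are clean and correct. The third case, where both boundary letters are double, has a genuine gap in the crucial existence claim.

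You assert that if, say, the right operation is defined at $\pi$, then there must be a left-double class $\underline\delta\neq\balpha$ whose larger position exceeds $q=\pi(i(\alpha))$. This is false for $\pi\in\sssigma$ in general. Take $d=4$ and
\[
\pi=\alpha\ \ \delta\ \ i(\delta)\ \ i(\alpha)\ \ *\ \ i(\beta)\ \ \gamma\ \ i(\gamma)\ \ \beta,
\]
with $\balpha,\underline\delta$ left-double and $\bbeta,\underline\gamma$ right-double. Here $q=4$, $r=6$; the only other left-double class $\underline\delta$ has positions $2,3<q$, and the only other right-double class $\underline\gamma$ has positions $7,8>r$, so neither (a) nor (b) holds. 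Yet a direct check shows that \emph{both} Rauzy operations are defined at this $\pi$. (In fact one can verify directly that $\Ga'_\pi=\emptyset$ here: the constraints force $\tau_\balpha,\tau_{\underline\delta}\le 0$ and $\tau_\bbeta,\tau_{\underline\gamma}\ge 0$, while $\mathcal S_\pi$ forces $\tau_\balpha+\tau_{\underline\delta}=\tau_\bbeta+\tau_{\underline\gamma}$, hence $\tau=0$.) So the mechanism ``an operation is defined $\Rightarrow$ the auxiliary class has the right position'' simply does not work; this $\pi$ is not irreducible, but your argument never used irreducibility beyond ``some arrow exists'', which is exactly what fails.

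The paper's proof avoids this problem entirely by a different strategy: rather than building $\tau$ for the given $\pi$, it uses the invariance $(B_\gamma^*)^{-1}\cdot\Ga'_{\pi_s}\subset\Ga'_{\pi_e}$ (noted after Lemma~\ref{Ga_invariant}) together with transitivity of the Rauzy class to reduce to a single convenient $\pi$ in $\RR$. It then manufactures such a $\pi$ by applying Rauzy moves until reaching a row of one of the two shapes \eqref{pi1}--\eqref{pi2}, for which an explicit $\tau$ (supported on two classes) visibly lies in $\Ga'_\pi$. To repair your approach you would need a direct combinatorial argument that for \emph{irreducible} $\pi$ with double boundary letters one of (a),(b) holds; that requires exploiting the ``every class wins somewhere'' clause in the definition of a Rauzy class, and is substantially harder than the paper's route.
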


\begin{proof}
By invariance and irreducibility, it is enough to find some $\pi \in \RR$ 
such that $\Ga'_\pi$ is non-empty.

Given $\alpha, \beta \in \AA$ suppose we have $\pi \in \RR$ with one of 
the two following forms: 
\begin{equation}\label{pi1}
\begin{array}{cccccccccccccc} \cdot & \cdot &
\alpha & \cdot & \cdot & i(\alpha) & \cdot & \beta & \cdot & i(\beta) & \cdot & * & \cdot & \cdot
\end{array}
\end{equation}
\centerline {or}
\begin{equation}\label{pi2}
\begin{array}{cccccccccccccc} \cdot & \cdot &
\alpha & \cdot & \cdot & \beta & \cdot & i(\alpha) & \cdot & i(\beta) & \cdot & * & \cdot & \cdot
\end{array}
\end{equation}

We can define $\tau \in \Ga_\pi'$ by choosing $\tau_\balpha=-\tau_\bbeta=1$ 
and $\tau_\bxi=0$ for all $\bxi \in \BA \backslash \{\balpha, \bbeta\}$.

Let us show that there exists some $\pi \in \RR$ satisfying this property.

By definition of permutation in $\sssigma$, there exist at least one double letter of 
each one of the types, i.e., there exist $\alpha, \beta \in \AA$ such that $\alpha$ is 
left double letter and $\beta$ is right double letter.

If there exists more than one double letter of both types, 
we can obtain another irreducible permutation $\pi'$ which has 
at most one double letter of each one of the types, as follows.
First we apply left or right operations until we obtain one double letter 
in the leftmost or the rightmost position, which is possible by 
irreducibility. We will assume, without loss of generality, that such a letter 
is at rightmost position. If there is at most one left double letter, 
we take the permutation obtained to be $\pi'$. 
But, if there are more than one left double letter, we apply right operations, 
until we find a permutation with just 
one left double letter. Those right operations are 
well-defined since we have more than one double letter of both types.

Suppose that $\alpha \in \AA$ is the unique left double letter.
Then, if it is necessary, we apply right operations until obtain
$\pi(\alpha)=1$. 

Let $\beta \in \AA$ such that $\pi(\beta)=2d+1$, i.e., 
\begin{equation*}
\begin{array}{cccccccccc} 
\alpha & \cdot & \cdot & \cdot & i(\alpha) & \cdot & * & \cdot & \cdot & \beta
\end{array}
\end{equation*}
If $\beta$ is simple applying the left operation we obtain a permutation of type
(\ref{pi1}) or (\ref{pi2}) depending on $\pi(i(\beta))>\pi(i(\alpha))$ or 
$\pi(i(\beta))<\pi(i(\alpha))$, respectively.
If $\beta$ is double we apply the left operation until we obtain a simple letter 
in the rightmost position of $\pi$ and we are in the same 
conditions as in the previous case.
\end{proof}

\begin{definition}
Let us say that a path $\gamma \in \Pi(\RR)$, starting at $\pi_s$ 
and ending at $\pi_e$, is \emph{strongly positive} if it is 
positive and $(B_\gamma^*)^{-1}
\cdot \Ga'_{\pi_s} \subset \Ga_{\pi_e}$.
\end{definition}

\begin{lemma} \label {strongly positive}
Let $\gamma$ be a $(4d-6)$-complete path.
Then $\gamma$ is strongly positive.
\end{lemma}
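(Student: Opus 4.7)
The plan is to decompose $\gamma = \gamma_a\gamma_b$ as the concatenation of two $(2d-3)$-complete sub-paths and handle the two parts of ``strongly positive'' separately. Positivity of $B_\gamma$ is immediate: by Lemma~\ref{complete}, both $B_{\gamma_a}$ and $B_{\gamma_b}$ have strictly positive entries, and since the product of two matrices with strictly positive entries is strictly positive, $B_\gamma = B_{\gamma_b}B_{\gamma_a}$ is positive.

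For strong positivity, fix $\tau \in \Gamma'_{\pi_s}\setminus\{0\}$ and let $\tau_m = (B_{\gamma_a}^*)^{-1}\tau$, $\tau_e = (B_{\gamma_b}^*)^{-1}\tau_m$ at the intermediate and final vertices. The observation that follows Lemma~\ref{Ga_invariant} already yields $\tau_m \in \Gamma'_{\pi_m}$ and $\tau_e \in \Gamma'_{\pi_e}$, so all the weak inequalities~\eqref{tauclosure} hold at $\pi_e$; the task is to promote each one to the strict inequality~\eqref{taudomain} defining $\Gamma_{\pi_e}$. The mechanism is identity~\eqref{winnerposition} from the proof of Lemma~\ref{Ga_invariant}, which shows that at every elementary Rauzy arrow with winner $\alpha$ a specific new partial-sum slot picks up an additive contribution of $\pm h_\balpha$, producing a strict inequality at that slot whenever the current height at the winner is strictly positive. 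Strict partial-sum inequalities, once created, are preserved by all subsequent elementary steps (each elementary $B$ has unit diagonal and non-negative entries).

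It remains to ensure positive heights throughout $\gamma_b$ and to cover every slot of $\pi_e$. The height vector transforms forward as $h_m = B_{\gamma_a}h$ and $h_e = B_{\gamma_b}h_m$, proved exactly as Lemma~\ref{translation} because $\tau$ transforms dually to $\lambda$ and $h = -\Omega(\pi)\tau$. The analogue of Lemma~\ref{hpositive} for $\Gamma'$ gives $h \in \overline{\R_+^\BA}$, and a direct inspection of the formulas for $h_\balpha$ in the proof of Lemma~\ref{hpositive} combined with the linear constraint defining $\mathcal{S}_{\pi_s}$ shows that $h \neq 0$ whenever $\tau \neq 0$ in $\Gamma'_{\pi_s}$; strict positivity of $B_{\gamma_a}$ (from Lemma~\ref{complete}) then yields $h_m \in \R_+^\BA$, and strict positivity of the heights persists throughout $\gamma_b$. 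Completeness (indeed, $(2d-3)$-completeness) of $\gamma_b$ guarantees that every involution class wins at some arrow, so strictifying contributions are injected often enough to cover all $2d-2$ slots of $\pi_e$. The main obstacle is the combinatorial bookkeeping in this last step: one must carefully track how partial-sum slots are identified under the successive Rauzy moves composing $\gamma_b$ and verify that each of the slots defining $\Gamma_{\pi_e}$ is actually reached by some strictifying winning arrow, which is exactly where the $(2d-3)$-completeness of $\gamma_b$ (rather than merely $1$-completeness) is essential.
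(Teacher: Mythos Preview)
Your overall strategy matches the paper's: split $\gamma$ into an initial $(2d-3)$-complete segment to force the height vector into $\R_+^\BA$, then use the remaining complete segments to turn each weak inequality of $\Ga'$ into a strict one. The ingredients you invoke (Lemma~\ref{complete} for positivity of the matrix and of the heights, the identity~\eqref{winnerposition} from the proof of Lemma~\ref{Ga_invariant} as the strictifying mechanism, and the observation that $h\neq 0$ for $\tau\in\Ga'_{\pi_s}\setminus\{0\}$) are exactly the ones the paper uses.

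However, the proposal is not a proof: you yourself flag the ``main obstacle'' --- the combinatorial tracking of which partial-sum slots become strict and whether all $2d-2$ of them are eventually reached --- and then stop. That bookkeeping is the entire content of the argument; everything before it is setup. In the paper this is handled by introducing counters $k^l_i,k^r_i$ recording, at each step $i$, how many consecutive strict inequalities already hold on the left and right of $*$, and proving three monotonicity claims: (1) once $h^{i-1}\in\R_+^\BA$, the counters never decrease; (2)--(3) if the winner of the $i$-th arrow sits in a specified range relative to the current counters, then $k^r_i-k^l_i$ strictly increases (or already equals the maximum on one side). One then argues that each complete sub-path after the first $2d-3$ forces at least one such increase, so after roughly $2d-3$ further complete sub-paths the counter reaches $2d-2$.

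One specific point deserves care: your assertion that ``strict partial-sum inequalities, once created, are preserved by all subsequent elementary steps (each elementary $B$ has unit diagonal and non-negative entries)'' is not justified by the matrix shape alone. The slots themselves are re-indexed by each Rauzy move (the position of $*$ and of $i(\alpha)$ shifts), so preservation is a statement about how the partial sums at the \emph{new} slots relate to those at the old ones; this is precisely what the paper checks in its Case~1/Case~2 analysis, and it genuinely uses $h^{i-1}\in\R_+^\BA$ (see in particular the equation~\eqref{winnerposition}-type identity at the slot where the winner sits). Without that verification, neither preservation nor the claim that each winning arrow strictifies a \emph{new} slot is established.
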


\begin{proof}
Let $d=\#\BA$. Fix $\tau \in \Ga'_{\pi_s} \backslash \{0\}$. Write $\gamma$ 
as a concatenation of arrows $\gamma=\gamma_1 \ldots \gamma_n$,
and let $\pi^{i-1}$ and $\pi^i$ denote the start and the end of 
$\gamma_i$. Let $\tau^0=\tau$,
$\tau^i=(B^*_{\gamma_i})^{-1} \cdot \tau^{i-1}$.  We must show that $\tau^n
\in \Ga_{\pi^n}$.

Let $h^i=-\Omega(\pi^i) \cdot \tau^i$.  Notice that $\tau \in 
\Ga'_{\pi^0} \backslash \{0\}$ implies that $h^0 \in \overline \R^\BA_+
\backslash \{0\}$.  Indeed, since $\tau \in \Ga'_{\pi^0}$,
for every $\bxi \in \BA$, we have 
\begin{equation*}
\sum_{\pi^0(*)<\pi^0(\alpha)<\pi^0(\xi)} \tau_\balpha \geq 0 \quad \text{and} 
\quad \sum_{\pi^0(\xi)<\pi^0(\alpha)<\pi^0(*)} \tau_\balpha \leq 0.  
\end{equation*}

Moreover, since $\tau \neq 0$, there exist $1 \leq k^l \leq \pi^0(*)$ 
maximal and $\pi^0(*) \leq k^r \leq 2d+1$ minimal such that 
$\tau_{(\pi^0)^{-1}(k^l)} \neq 0$ and 
$\tau_{(\pi^0)^{-1}(k^r)} \neq 0$.
Since $\pi^0$ is irreducible, $k^r-k^l<2d-1$. 
%Since $\tau \in \Ga'_{\pi_0} \backslash \{0\}$ we have $1<k^r-k^l<2d$. 
%So, we can not have $k^l= \pi(*)$ and $k^r= \pi(*)$ neither 
%$k^l= 1$ and $k^r=2d+1$ simultaneously.
Remember that $h^0_\bxi \geq 0$ for all $\bxi$ and the inequality 
is strict if $\pi^0(\xi)=k^r+1$ and $k^r<2d+1$ or if 
$\pi^0(\xi)=k^l-1$ and $1<k^l<\pi^0(*)$.
Since $h^i=B_{\gamma_i} \cdot h^{i-1}$ we can consider a positive path 
$\gamma_1 \ldots \gamma_i$ and then $h^i \in \R^\BA_+$.

%Suppose that $k^r=\pi(*)$. In this case, 
%$\sum_{\pi^i(*)<\pi^i(\xi) \leq k} \tau^i_\bxi=0$ for all 
%$\pi(*) < k \leq 2d+1$ and there exist at least to double 
%letters $\xi_1$ and $\xi_2$ of type left such that 
%$\tau_{\bxi_1} < 0 < \tau_{\bxi_2}$. Since $\tau \in \Ga'_{\pi_0}$ 
%we also know that $m_\pi(\bxi_2)<m_\pi(\bxi_1)$. So, we apply 
%the Rauzy algorithm of left until obtain a double right 
%letter in the rightmost position and after that we apply 
%again the Rauzy algorithm but of right type until $\bxi_1$ 
%becomes a simple letter. By invariance we can suppose that
%$\pi(*)<k^r$ and analogoulsy $k^l < \pi(*)$.

Let $\pi^i(*) \leq k^r_i \leq 2d$ be maximal and $2 \leq k^l_i \leq \pi^i(*)$ 
be minimal such that
\be \nonumber
\sum_{\pi^i(*)<\pi^i(\xi) \leq k} \tau^i_\bxi>0 \quad\text{for all }
\pi^i(*) < k \leq k^r_i,
\ee
\be \nonumber
\sum_{k \leq \pi^i(\xi)<\pi^i(*)} \tau^i_\bxi<0 \quad\text{for all } k^l_i 
\leq k < \pi^i(*).
\ee

We claim that
\begin{enumerate}
%\item If $h^{i-1} \in \R^\BA_+$ then $k^r_i -\pi^i(*) \geq k^r_{i-1}-\pi^{i-1}(*)$ 
%and $\pi^i(*)-k^l_i \geq \pi^{i-1}(*) - k^l_{i-1}$,
%\item If $h^{i-1} \in \R^\BA_+$ and the winner of $\gamma_i$ is one of the
%first $k^r_{i-1}+1$ letters after $*$ in $\pi^{i-1}$ then $k^r_{i} -\pi^i(*) \geq 
%\min \{2d-\pi^{i-1}(*),k^r_{i-1}+1-\pi^{i-1}(*)\}$,
%\item If $h^{i-1} \in \R^\BA_+$ and the winner of $\gamma_i$ is one of the
%last $k^l_{i-1}-1$ letters before $*$ in $\pi^{i-1}$ then $\pi^i(*)-k^l_{i} \geq
%\min \{\pi^{i-1}(*)-2,\pi^{i-1}(*)-k^l_{i-1}-1\}$.
\item If $h^{i-1} \in \R^\BA_+$ then $k^r_i -\pi^i(*) \geq k^r_{i-1}-\pi^{i-1}(*)$ 
and $\pi^i(*)-k^l_i \geq \pi^{i-1}(*) - k^l_{i-1}$, in particular 
$k^r_i - k^l_i \geq k^r_{i-1}-k^l_{i-1}$;
\item If $h^{i-1} \in \R^\BA_+$ and the winner of $\gamma_i$ is one of the
first $k^r_{i-1}+1$ letters after $*$ in $\pi^{i-1}$ then $k^r_{i} -k^l_i \geq 
\min \{k^r_{i-1}-k^l_{i-1}+1, 2d-k^l_{i-1}\}$;
\item If $h^{i-1} \in \R^\BA_+$ and the winner of $\gamma_i$ is one of the
last $k^l_{i-1}-1$ letters before $*$ in $\pi^{i-1}$ then $k^r_{i} -k^l_i \geq 
\min \{k^r_{i-1}-k^l_{i-1}+1, k^r_{i-1}-2\}$.
\end{enumerate}

Notice that $2<\pi^i(*) < 2d$ for all $i$.
%Let us see that (1), (2) and (3) imply the result, which is equivalent to
%the statement that $k^r_n - k^l_n=2d-1$. In fact, it is enough to prove that 
%$k^r_n \geq 2d$ and $k^l_n\leq 2$. Since $\sum_{1 \leq \pi^i(*) \leq \pi^i(\xi)} 
%\tau_\bxi = \sum_{\pi^i(*) \leq \pi^i(\xi) \leq 2d+1} \tau_\bxi$ for all $i$, 
%we can not have simultaneously $k_n^l=1$ and $k_n^r=2d+1$.
Let us see that (1), (2) and (3) imply the result, which is equivalent to
the statement that $k^r_n - k^l_n \geq 2d-2$.
%We will show that $k^r_n \geq 2d$, the other estimate being analogous.  
%Let us write $\gamma=\gamma_{(1)}\ldots\gamma_{(4d-7)}$ where 
%$\gamma_{(j)}$ is complete. Write $\gamma_{(j)}=\gamma_{s_j}\ldots\gamma_{e_j}$.
%By Lemma \ref {complete}, $h^k \in \R^\BA_+$ for $k \geq
%e_{2d-3}$. From the definition of a complete path, 
%for each $j>2d-3$, there exists $e_{j-1}<i \leq e_j$ such that the winner of 
%$\gamma_i$ is one of the letters in position $m$ at $\pi^{i-1}$ such that
%$\pi^{i-1}(*)<m<k^r_{e_{j-1}}+1$. It follows that 
%$k^r_{e_j} - \pi^i(*) \geq \min \{2d-\pi^{i-1}(*),k^r_{e_{j-1}}+1-\pi^{i-1}(*)\}$, 
%and so $k^r_n=k^r_{e_{2d-3+2d-4}} \geq \min \{2d,k^r_{e_{2d-3}}+2d-4\}=2d$.
Let us write $\gamma=\gamma_{(1)}\ldots\gamma_{(4d-7)}$ where 
$\gamma_{(j)}$ is complete and each $\gamma_{(j)}=\gamma_{s_j}\ldots\gamma_{e_j}$.
By Lemma \ref {complete}, $h^k \in \R^\BA_+$ for $k \geq
e_{2d-3}$. From the definition of a complete path, 
for each $j>2d-3$, there exists $e_{j}< i_1 \leq e_{j+1}$ such that the winner of 
$\gamma_{i_1}$ is one of the letters in position $m_1$ at $\pi^{i_1-1}$ such that
$\pi^{i_1-1}(*)<m_1<k^r_{e_j}+1$. It follows that 
$k^r_{e_{j+1}} - k^l_{e_{j+1}} \geq \min \{k^r_{i_1-1}-k^l_{i_1-1}+1, 2d- k^l_{i_1-1}\}$, 
so 
\be \label{kr}
k^r_{e_{j+1}} - k^l_{e_{j+1}} \geq \min \{k^r_{e_j}-k^l_{e_j}+1, 2d- k^l_{e_j}\}.
\ee
In the same way there exists $e_{j-1}< i_2 \leq e_{j}$ such that the winner of 
$\gamma_{i_2}$ is one of the letters in position $m_2$ at $\pi^{i_2-1}$ such that
$k^r_{e_j}+1 < m_2 < \pi^{i_2-1}(*)$. It follows that 
$k^r_{e_j} - k^l_{e_j} \geq \min \{k^r_{i_2-1}-k^l_{i_2-1}+1, k^r_{i_2-1}-2\}$, 
thus
\be \label{kl}
k^r_{e_j} - k^l_{e_j} \geq \min \{k^r_{e_{j-1}}-k^l_{e_{j-1}}+1, k^r_{e_{j-1}}-2\}.
\ee
By (\ref{kr}) and (\ref{kl}), we see that: 
\be \nonumber
k^r_{e_{j+1}} - k^l_{e_{j+1}} \geq \min \{k^r_{e_{j-1}}-k^l_{e_{j-1}}+2, 
2d-(k^l_{e_{j-1}}-1), (k^r_{e_{j-1}}+1)-2, 2d-2\}.
\ee
Therefore, we obtain
$k^r_n-k^l_n=k^r_{e_{2d-3+2d-4}}-k^l_{e_{2d-3+2d-4}} \geq 
\min \{k^r_{e_{2d-3}}-k^l_{e_{2d-3}}+2d-2, 
2d-(k^l_{e_{2d-3}}-2d-4), (k^r_{e_{j-1}}+2d-4)-2, 2d-2\}
=2d-2$.

We now check (1), (2) and (3).  Assume that $h^{i-1} \in  \R^\BA_+$, and
that $\gamma_i$ is a right arrow, the other case being analogous.
Let $\alpha$ be the rightmost letter of $\pi^{i-1}$ which is the winner 
of $\gamma_i$, and let $\beta$ be the leftmost letter of $\pi^{i-1}$ which 
is the loser of $\gamma_i$.

{\bf Case 1:}  Suppose $\pi^{i-1}(i(\alpha))<\pi^{i-1}(*)$.

If the winner of $\gamma_i$ is not one of the $k^l_{i-1}-1$ last letters in 
the left side of $*$ in $\pi^{i-1}$, then for every 
$\bxi \in \BA$ such that $k^l_{i-1} \leq \pi^{i-1}(\xi) \leq 2d+1$,
we have $\pi^{i-1}(\xi)=\pi^i(\xi)$ and
$\tau^{i-1}_\bxi=\tau^i_\bxi$ for all $k^l_{i-1} \leq \pi^{i-1}(\xi) \leq 2d$. 
Hence $k_i^r -\pi^i(*) \geq k_{i-1}^r-\pi^{i-1}(*)$ and 
$\pi^i(*)-k^l_i \geq \pi^{i-1}(*)-k^l_{i-1}$. 
%In particular, since $\pi^i(*)=\pi^{i-1}(*)$ we have $k_i^r\geq k_{i-1}^r$ and 
%$k_i^l \leq k_{i-1}^l$.

If the winner $\balpha$ of $\gamma_i$ appears in the $k$-th 
position counting from $*$ to the left in $\pi^{i-1}$ with 
$k^l_{i-1}-1 \leq k <\pi(*)$, then

\begin{equation*}
\displaystyle
\sum_{j \leq \pi^i(\xi) < \pi^i(*)} \tau^i_\bxi=
\sum_{j \leq \pi^{i-1}(\xi) <\pi^{i-1}(*)} \tau^{i-1}_\bxi<0 \quad\text{for all}
\; k+1 \leq j < \pi(*),\\
\end{equation*}
\be \nonumber
\displaystyle
\sum_{j \leq \pi^i(\xi) < \pi^i(*)} \tau^i_\bxi=
\sum_{j+1 \leq \pi^{i-1}(\xi) <\pi^{i-1}(*)} \tau^{i-1}_\bxi<0 \quad\text{for all}
\; k^l_{i-1}-1 \leq j \leq k-1,\\
\ee
\be \nonumber
\displaystyle
\sum_{k \leq \pi^i(\xi) <\pi^i(*)} \tau^i_\bxi=
\sum_{2 \leq \pi^{i-1}(\xi) <\pi^{i-1}(*)} \tau^{i-1}_\bxi-h^{i-1}_\balpha 
\leq -h^{i-1}_\balpha<0,
\ee
which implies that $\pi^i(*)-k^l_i \geq \min \{\pi^{i-1}(*)-2,\pi^{i-1}(*)+1-k^l_{i-1}\}$, 
hence $k^r_i-k^l_i \geq \min \{k^r_{i-1}-k^l_{i-1}+1,k^r_{i-1}-2\}$.

This shows that (1) holds and (3) holds. 
Moreover, (2) also holds since 
its hypothesis can only be satisfied if $k_{i-1}^r=2d$.

{\bf Case 2:} Suppose $\pi^{i-1}(i(\alpha))>\pi^{i-1}(*)$.

%In this case $\pi^i(*)-\pi^i(\xi)=\pi^{i-1}(*)-\pi^{i-1}(\xi)-1$ 
%for all $\xi$ such that $1<\pi^i(\xi) < \pi^i(*)$ and 
%$\tau^i_\bxi=\tau^{i-1}_\bxi$ for $1<\pi^i(\xi)<\pi^i(*)$, 
%hence $\pi^i(*)-k^l_i \geq \pi^{i-1}(*)-k^l_{i-1}$. This shows 
%that the second claim of (1) holds. Moreover, (3) also 
%holds since its hypothesis can only be satisfied if $k_{i-1}^l=2$.

%If the winner of $\gamma_i$ is not one of the $k^r_{i-1}+1$ first letters in 
%the right side of $\pi^{i-1}(*)$ in $\pi^{i-1}$, then for every 
%$\bxi \in \BA$ such that $\pi^{i-1}(*) \leq \pi^{i-1}(\xi) \leq k^r_{i-1}$,
%we have $\pi^{i-1}(\xi)-\pi^{i-1}(*)=\pi^i(\xi)-\pi^i(*)$,
%$\tau^{i-1}_\bxi=\tau^i_\bxi$, so $k^r_i - \pi^i(*) \geq k^r_{i-1}- \pi^{i-1}(*)$. 

%If the winner $\balpha$ of $\gamma_i$ appears in the $k$-th 
%in $\pi^{i-1}$ with $\pi(*) < k \leq k^r_{i-1}+1$, then
%\be \nonumber
%\sum_{\pi^i(*)<\pi^i(\xi) \leq j} \tau^i_\bxi=
%\sum_{\pi^{i-1}(*)<\pi^{i-1}(\xi) \leq j-1} \tau^{i-1}_\bxi>0 \quad\text{for all}
%\; \pi(*) \leq j < k-1,
%\ee
%\be \nonumber
%\sum_{\pi^i(*) < \pi^i(\xi) \leq j} \tau^i_\bxi=
%\sum_{\pi^{i-1}(*) < \pi^{i-1}(\xi) \leq j} \tau^{i-1}_\bxi>0 \quad\text{for all}
%\; k \leq j \leq k^r_{i-1}+1,
%\ee
%\be \nonumber
%\sum_{\pi^i(*) < \pi^i(\xi) \leq k-1} \tau^i_\bxi=
%-\sum_{2\leq \pi^{i-1}(\xi) < \pi^{i-1}(*)} \tau^{i-1}_\bxi+h^{i-1}_\balpha 
%\geq h^{i-1}_\balpha>0,
%\ee
%which implies that $k^r_i-\pi^i(*)\geq \min \{k^r_{i-1}+1-\pi^{i-1}(*),2d-\pi^{i-1}(*)\}$.

%This shows that both (2) and the first claim of (1) must hold.

If the winner of $\gamma_i$ is not one of the $k^r_{i-1}+1$ first letters in 
the right side of $*$ in $\pi^{i-1}$, then for every 
$\bxi \in \BA$ such that $1 < \pi^{i-1}(\xi) \leq k^r_{i-1}$,
we have $\pi^i(\xi)=\pi^{i-1}(\xi)-1$ and $\tau^{i-1}_\bxi=\tau^i_\bxi$, 
so $k^r_i - \pi^i(*) \geq k^r_{i-1}- \pi^{i-1}(*)$ and 
$\pi^i(*)-k^l_i \geq \pi^{i-1}(*)-k^l_{i-1}$. 
%In particular, since $\pi^i(*)=\pi^{i-1}(*)-1$ we have $k_i^r \geq k_{i-1}^r$ and 
%$k_i^l \leq k_{i-1}^l$.

If the winner $\balpha$ of $\gamma_i$ appears in the $k$-th 
in $\pi^{i-1}$ with $\pi^{i-1}(*) < k \leq k^r_{i-1}+1$, then
\be \nonumber
\sum_{\pi^i(*)<\pi^i(\xi) \leq j} \tau^i_\bxi=
\sum_{\pi^{i-1}(*)<\pi^{i-1}(\xi) \leq j-1} \tau^{i-1}_\bxi>0 \quad\text{for all}
\; \pi(*) \leq j < k-1,
\ee
\be \nonumber
\sum_{\pi^i(*) < \pi^i(\xi) \leq j} \tau^i_\bxi=
\sum_{\pi^{i-1}(*) < \pi^{i-1}(\xi) \leq j} \tau^{i-1}_\bxi>0 \quad\text{for all}
\; k \leq j \leq k^r_{i-1}+1,
\ee
\be \nonumber
\sum_{\pi^i(*) < \pi^i(\xi) \leq k-1} \tau^i_\bxi=
-\sum_{2\leq \pi^{i-1}(\xi) < \pi^{i-1}(*)} \tau^{i-1}_\bxi+h^{i-1}_\balpha 
\geq h^{i-1}_\balpha>0,
\ee
which implies that 
$k^r_i-\pi^i(*) \geq \min \{k^r_{i-1}+1-\pi^{i-1}(*),2d-\pi^{i-1}(*)\}$,
hence $k^r_i-k^l_i \geq \min \{k^r_{i-1}-k^l_{i-1}+1, 2d - k^l_{i-1}\}$.

This shows that both (1) and (2) holds. Moreover, (3) also holds since 
its hypothesis can only be satisfied if $k_{i-1}^l=2$.
\end{proof}

\begin{cor}\label{non-empty}
If $\pi$ is irreducible then $\Ga_\pi$ is non-empty.
\end{cor}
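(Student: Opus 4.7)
The plan is to combine the two preceding lemmas. Fix an irreducible $\pi \in \RR$. My goal is to exhibit a $(4d-6)$-complete path $\gamma \in \Pi(\RR)$ that ends at $\pi$; once this is done, Lemma \ref{strongly positive} gives $(B_\gamma^*)^{-1} \cdot \Ga'_{\pi_s} \subset \Ga_\pi$, and Lemma \ref{Theta'} gives a nonzero $\tau' \in \Ga'_{\pi_s}$, so that $\tau := (B_\gamma^*)^{-1} \cdot \tau' \in \Ga_\pi$ proves the corollary.

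The only real step is the construction of such a path $\gamma$. By the very definition of a Rauzy class, $\RR$ is a minimal forward-invariant subset of $\sssigma$ (under the left and right operations) with the property that every involution class $\balpha \in \BA$ is the winner of some arrow whose endpoints both lie in $\RR$. Minimality together with forward invariance implies that the Rauzy diagram restricted to $\RR$ is strongly connected: from any vertex of $\RR$ one can reach any other vertex by a path in $\Pi(\RR)$. In particular, for each involution class $\balpha$ I can find a closed path based at $\pi$, staying in $\RR$, that contains an arrow with winner $\balpha$; concatenating one such loop for each of the $d$ involution classes gives a single complete path $\gamma_0 \in \Pi(\RR)$ based at $\pi$. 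Concatenating $4d-6$ copies of $\gamma_0$ then produces the desired $(4d-6)$-complete path $\gamma$ with $\pi_s = \pi_e = \pi$.

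With $\gamma$ in hand, the conclusion is immediate: Lemma \ref{Theta'} applied to the irreducible vertex $\pi_s = \pi$ supplies $\tau' \in \Ga'_\pi$, Lemma \ref{strongly positive} says $\gamma$ is strongly positive, and hence $(B_\gamma^*)^{-1} \cdot \tau' \in \Ga_\pi$, witnessing $\Ga_\pi \neq \emptyset$.

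There is no serious obstacle: the preceding two lemmas have already done all the geometric and combinatorial work. The only thing to verify is the elementary graph-theoretic fact that long complete loops exist in $\RR$, and this is a direct consequence of the defining properties of a Rauzy class.
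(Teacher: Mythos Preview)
Your proof is correct and follows essentially the same route as the paper: take a strongly positive loop $\gamma$ based at $\pi$ (via Lemma~\ref{strongly positive}), apply it to a nonzero element of $\Ga'_\pi$ (supplied by Lemma~\ref{Theta'}), and land inside $\Ga_\pi$. The only difference is that you spell out the graph-theoretic reason why a $(4d-6)$-complete loop at $\pi$ exists, whereas the paper simply asserts the existence of such a strongly positive path.
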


\begin{proof}
Let $\gamma \in \Pi(\RR)$ be a strongly positive path starting 
and ending at $\pi$, which exists by Lemma \ref{strongly positive}. 
Then $(B_\gamma^*)^{-1}\cdot \Ga'_{\pi} \subset \Ga_{\pi}$ and 
by Lemma \ref{Theta'} the set $\Ga'_\pi$ is non-empty. 
Therefore $\Ga_\pi$ is non-empty.
\end{proof}

Given that $\Ga_\pi$ is non-empty, it is easy to see that $\Ga'_\pi \cup
\{0\}$ is in fact just the closure of $\Ga_\pi$.

%We define the area of the zippered rectangle corresponding to 
%the data $(\lambda, \pi, \tau)$ by 
%$A(\lambda, \pi, \tau)=-2\langle \lambda,\Omega(\pi) \cdot \tau \rangle$. 

\subsubsection{Extension of induction to the space of zippered rectangles}

Let $\gamma \in \Pi(\RR)$ be a path starting at $\pi$ and define 
$\Ga_\gamma$ satisfying:
\be \nonumber
B_\gamma^* \cdot \Ga_\gamma= \Ga_\pi.
\ee

If $\gamma$ is a right arrow ending at $\pi'$, then 
$\Ga_\gamma=\{ \tau \in \Ga_{\pi'} \di \sum_{x \in \AA}
\tau_x<0 \}$, and if $\gamma$ is a left arrow ending at $\pi'$, then
$\Ga_\gamma=\{ \tau \in \Ga_{\pi'} \di \sum_{x \in \AA} \tau_x>0\}$.

Thus, the map
\be \nonumber
\widehat Q^\gamma:\Delta_\gamma \times \Ga_\pi \to
\Delta_{\pi'} \times \Ga_\gamma,
\quad \widehat Q^\gamma(\lambda,\pi,\tau)=
(Q(\lambda,\pi),(B_\gamma^*)^{-1} \cdot \tau)
\ee
is invertible.
With this we define an invertible skew-product $\widehat Q$ 
over $Q$ conside\-ring all $\widehat Q^\gamma$ for every arrow 
$\gamma$. So, we obtain a map from $\bigcup
(\Delta_\gamma \times \Ga_\pi)$ (where the union is taken over all $\pi
\in \RR$ and all arrows $\gamma$ starting at $\pi$) to $\bigcup
(\Delta_{\pi'} \times \Ga_\gamma)$ (where the union is taken over all
$\pi' \in \RR$ and all arrows ending at $\pi'$).
Denote $\widehat
\Delta_\RR=\bigcup_{\pi \in \RR} (\Delta_\pi \times \Ga_\pi)$.

Let $(e_\balpha)_{\balpha \in \BA}$ be the canonical basis of $\R^\BA$.
We will consider a measure in $\widehat \Delta_\RR$ defined as follows.
Let $\{v_1,\ldots,v_d\}$ be a basis of $\R^d$. We have a volume form 
given by $\omega(v_1,\ldots,v_d)=\det(v_1,\ldots,v_d)$. 
We want to define a volume form in $\widehat\Delta_\RR$ 
coherent with $\omega$. Given the subspace $\mathcal{S}_\pi$ define 
the orthogonal vector 
$v_\pi=\sum_{\pi(x)<\pi(*)} e_{\bx}-\sum_{\pi(x)>\pi(*)} e_{\bx}$. 
We can view this vector like a linear functional 
$\psi_\pi: \R^d \to \R$ defined by $\psi_\pi(x)=\langle v_\pi, x \rangle$. 
Now we define a form $\omega^{v_\pi}$ on $\mathcal{S}_\pi$ 
such that $\omega=\omega^{v_\pi} \wedge \psi_\pi$ (where $\wedge$ 
denotes the exterior product). We have that $\omega^{v_\pi}$ is a 
$(d-1)$-form and it is well defined in the orthogonal complement of 
$\psi_\pi$ (i.e., $\psi_\pi(\R^d)^{\perp}=\ker(\psi_\pi)=\mathcal{S}_\pi$). 
Notice that, given $\pi, \pi' \in \RR$ and a path $\gamma \in \Pi(\RR)$ joining 
$\pi$ to $\pi'$, $B_\gamma \cdot v_\pi = v_{\pi'}$ and
\be \nonumber
(B^*_\gamma)^{-1} \cdot \left( \frac{v_\pi}{\langle v_\pi,v_\pi \rangle} \right) -
\frac{v_{\pi'}}{\langle v_{\pi'}, v_{\pi'} \rangle} \in \mathcal{S}_{\pi'}.
\ee 
%So we have:
%\be \nonumber
%\begin{array}{cc}
%\begin{aligned}
%\displaystyle 
%&([(B_\gamma^*)^{-1}]^* \omega^{v_\pi'})\left(u_1,\ldots,u_{d-1}\right)
%=\omega^{v_\pi'}\left((B_\gamma^*)^{-1}\cdot u_1, \ldots,(B_\gamma^*)^{-1}\cdot u_{d-1}\right)\\
%\displaystyle &=\frac{\omega(v_{\pi'},(B_\gamma^*)^{-1}\cdot u_1, \ldots,(B_\gamma^*)^{-1}\cdot u_{d-1})}
%{\psi_{\pi'}(v_{\pi'})}\\
%\displaystyle &=\omega\left(\frac{v_{\pi'}}{\langle v_{\pi'},v_{\pi'}\rangle},
%(B_\gamma^*)^{-1}\cdot u_1, \ldots,(B_\gamma^*)^{-1}\cdot u_{d-1}\right)\\
%\displaystyle &=\omega \left(\frac{v_\pi}{\langle v_\pi,v_\pi \rangle}, u_1,\ldots,u_{d-1}\right)
%=\frac{\omega(v_\pi,u_1,\ldots,u_{d-1})}{\psi_\pi(v_\pi)}\\
%\displaystyle &=\omega^{v_\pi}(u_1,\ldots,u_{d-1}).
%\end{aligned}
%\end{array}
%\ee
So, the pull-back of $\omega^{v_{\pi'}}$ is equal to $\omega^{v_\pi}$, i.e.,
\be \nonumber
[(B_\gamma^*)^{-1}]^* \omega^{v_{\pi'}}=\omega^{v_\pi}. 
\ee

Consider the volume form $\omega^{v_\pi}$ and the 
corresponding Lebesgue measure $\Leb_\pi$ on $\mathcal{S}_\pi$.
So we have a natural volume measure $\widehat m_\RR$ on 
$\widehat \Delta_\RR$ which is a product of a counting measure 
on $\RR$ and  the restrictions of $\Leb_\pi$ on $\mathcal{S}_\pi^+$ 
and $\Ga_\pi$.

Let $\phi(\lambda, \pi, \tau)=\|\lambda\|=\sum_{\balpha \in \BA}\lambda_{\balpha}$. 
The subset $\Om_\RR \subset \widehat \Delta_\RR$ of all $x$ such
that either
\begin{itemize}
\item $\widehat Q(x)$ is defined and $\phi(\widehat Q(x))<1 \leq \phi(x)$,
\item $\widehat Q(x)$ is not defined and $\phi(x) \geq 1$,
\item $\widehat Q^{-1}(x)$ is not defined and $\phi(x)<1$.
\end{itemize}
is a fundamental domain for the action of $\widehat Q$: each orbit
of $\widehat Q$ intersects $\Om_\RR$ exactly once.

Let $\Om^{(1)}_\RR$ be the subset of $\Om_\RR$ such that 
$A(\lambda, \pi, \tau)=1$. 
Let $m^{(1)}_\RR$ be the restriction of 
the measure $\widehat m_\RR$ to the subset $\Om^{(1)}_\RR$.

\subsubsection{The Veech flow with involution} \label{veechteichmullerflow}

Using the coordinates introduced before, we define 
a flow $\TV=(\TV_t)_{t \in \R}$ 
on $\widehat \Delta_\RR$ given by 
$\TV_t(\lambda,\pi,\tau)=(e^t \lambda,\pi,e^{-t} \tau)$.
It is clear that $\TV$ commutes with the map $\widehat Q$. 
The \emph{Veech Flow with involution} is defined by
$\VF_t:\Om_\RR \to \Om_\RR$, $\VF_t(x)=\widehat Q^n(\TV_t(x))$. 

Notice that the Veech flow with involution leaves invariant the space 
of zippered rectangles of area one.
So, the restriction $\VF_t:\Om_\RR^{(1)} \to \Om_\RR^{(1)}$ 
leaves invariant the volume form which, as we will see later, is finite.

\section{The distortion estimate} \label {distortion}

We will introduce a class of measures involving the Lebesgue 
measure and its forward iterates under the renormalization map.

For $q\in \R_+^\BA$, let
$\e_{\pi,q}=\{\lambda \in \mathcal{S}_\pi^+ \tq \langle \lambda, q \rangle<1\}$.
Let $\nu_{\pi,q}$ be the measure on the $\sigma$-algebra of subsets of
$\mathcal{S}_\pi$ which are invariant under multiplication by
positive scalars, given by
$\nu_{\pi,q}(A)=\Leb_\pi(A \cap \e_{\pi,q})$.  If $\gamma$ is a path
starting at $\pi$ and ending at $\pi'$ then
\be \nonumber
\nu_{\pi,q}(B_\gamma^* \cdot A)= \Leb_\pi( (B_\gamma^* \cdot A) \cap \e_{\pi,q}) =
\Leb_{\pi'}( A \cap \e_{\pi',B_\gamma\cdot q}) = \nu_{\pi',B_\gamma\cdot q}(A).
\ee

We will obtain estimates for $\nu_{\pi,q}(\Delta_\gamma)$.
% or sometimes
%we will just estimate $\nu_{\pi,q}(\Delta)$ where $\Delta$ is a simplex
%contained in $\Delta_\gamma$.

Let $\RR \subset \ssigma(\AA)$ be a Rauzy class, $\gamma \in \Pi(\RR)$, let
$\pi$ and $\pi'$ be the start and the end of $\gamma$, respectively. 
We denote $\e_{\gamma,q} \times \{\pi\}=(\e_{\pi,q}\times \{\pi\}) \cap
\Delta_\gamma$, so that $B_\gamma^* \cdot \e_{\pi',B_\gamma \cdot
q}=\e_{\gamma,q}$.

%\begin{example}
% Let $\pi = A \; \invA \; \invB \; * \; C \; \invC \; B$, let $\gamma$ be a 
%left arrow and let $\pi'$ be the end of $\gamma$, as in the next diagram.
%\begin{equation*}
%\begin{CD}
%  \pi=\begin{array}{ccccccc} A & \invA & \invB & * & C & \invC & B
%\end{array} \\ 
%  @V{L}VV \\ 
%  \pi'=\begin{array}{ccccccc} A & \invA & B & \invB & * & C & \invC 
%\end{array}
% \end{CD}
% \end{equation*}

%In Figure \ref{fig4} we have representations of the sets $\e_{\pi,q}$,
%$\e_{\gamma,q}$ and $\e_{\pi',B_\gamma \cdot q}$. 

%\begin{figure}[h]
%\begin{center}
%   \input d3abc2.pdf_t
%\end{center}
%\caption{$\e_{\pi,q}$ and $\e_{\gamma,q}$}\label{fig4}
%\end{figure}
%\end{example}

For $\AA' \subset \AA$ non-empty and invariant by
involution, let $\M_{\AA'}(q)=\max_{\alpha \in \AA'} q_\alpha$ 
and $\M(q)=\M_\AA(q)$. Consider also 
$\m_{\AA'}(q)=\min_{\alpha \in \AA'} q_\alpha$ and
$\m(q)=\m_\AA(q)$.

If $\Gamma \subset \Pi(\RR)$ is a set of paths starting at the same $\pi
\in \RR$, we denote $\e_{\Gamma,q}=
\bigcup_{\gamma \in \Gamma} \e_{\gamma,q}$.
Given $\Gamma \subset \Pi(\RR)$ and $\gamma_s \in \Pi(\RR)$
we define $\Gamma_{\gamma_s} =\{ \gamma \in \Gamma \tq \gamma_s$ 
is the start of $\gamma\}$. 
%and $\Gamma^{\gamma_e}=\{ \gamma \in \Gamma \tq \gamma_e$
%is the end of $\gamma\}$.

We say that a vertice is \emph{simple} or \emph{double} depending whether 
it is labelled by a simple or a double letter, respectively.
Notice that $\e_{\pi,q}$ is a convex open polyhedron which
vertices are:
\begin{itemize}
\item the trivial vertex $0$;
\item the simple vertices $q_{\balpha}^{-1}
e_{\balpha}$, where ${\alpha}$ is simple;
\item the double vertices $(q_{\balpha}+q_{\bbeta})^{-1}
(e_{\balpha}+e_{\bbeta})$, where $\alpha,\beta$ 
are double and
$\pi(\alpha)<\pi(*)<\pi(\beta)$ or $\pi(\beta)<\pi(*)<\pi(\alpha)$.
\end{itemize}
A simple vertex $v=q_{\balpha}^{-1} e_{\balpha}$
is called of type $\balpha$ and weight $w(v)=q_{\balpha}$, and a double
vertex $v=(q_{\balpha}+q_{\bbeta})^{-1}
(e_{\balpha}+e_{\bbeta})$ is called of type
$\{\balpha,\bbeta\}$ and of weight $w(v)=q_{\balpha}+q_{\bbeta}$. 

%\begin{example}
% Notice that the number of vertices of the polyhedrons which represent 
%$\e_{\pi,q}$ and $\e_{\gamma,q}$, where $\gamma$ is an arrow, can be different, 
%and so, we obtain different polyhedrons. 

%Let $\pi = E \; \invE \; D \; \invD \; * \; A \; B \; \invB \; C \; \invC \; \invA$ 
%be a permutation ang let $\gamma$ be a right arrow starting at $\pi$ as in the next 
%diagram:
%\begin{equation*}
% \begin{CD}
%  \pi=\begin{array}{ccccccccccc} E & \invE & D & \invD & * & A & B & \invB & C & \invC & \invA
%\end{array} \\
%  @V{R}VV \\
%  \pi'=\begin{array}{ccccccccccc} \invE & D & \invD & * & E & A & B & \invB & C & \invC & \invA 
%\end{array}
% \end{CD}
%\end{equation*}

%In this case we observe that the polyhedrons $\e_{\pi,q}$ and $\e_{\gamma,q}$ have different 
%different number of vertices, as we can see in the Figure \ref{fig6}.

%\begin{figure}[h]
%\begin{center}
%   \input prisma2.pdf_t
%\end{center}
%\caption{Polyhedrons with different number of vertices.}\label{fig6}
%\end{figure}

%\end{example}

An \emph{elementary subsimplex} of $\e_{\pi,q}$ is an open simplex
whose vertices are also vertices of $\e_{\pi,q}$, and one of them is $0$.  Notice
that $\e_{\pi,q}$ can be always written as a union of at most $C_1(d)$
elementary simplices, up to a set of codimension one.

%\begin{example}
%Let $\pi = B \; A \; C \; \invC \; \invB \; * \; D \; \invA \; \invD \; E \; %\invE$.
%In Figure \ref{fig3} we have a representation of the simplex associated to 
%$\pi$ and the respective elementary subsimplices removing the vertice $0$.
%\begin{figure}[h]
%\begin{center}
% \input piramides.pdf_t
%\caption{Simplex and subsimplices} \label{fig3}
%\end{center}
%\end{figure}
%\end{example}

A set of non-trivial vertices of $\e_{\pi,q}$ is
contained in the set of vertices of some elementary subsimplex if and
only if the vertices are linearly independent.  If $\alpha$
is simple then any elementary subsimplex must have a vertex of type
$\balpha$ and if $\alpha$ is double then any elementary subsimplex must
have a vertex of type $\{\balpha,\bx\}$ for some 
$\bx \neq \balpha$ with $x$ double.
If $\e$ is an elementary subsimplex with simple vertices of type $\balphai$ and
double vertices of type $\{\bbetaj1,\bxij2\}$ then
$\nu_{\pi,q}(\e)=k(\pi,\e) \prod q_{\balphai}^{-1} \prod
(q_{\bbetaj1}+q_{\bxij2})^{-1}$ where $k(\pi,\e)$ is a positive
integer only
depending on $v_\pi$ and on the types of the 
double vertices of $\e$. 
In particular there is an integer $C_2(d)$ such that
$k(\pi,\e) \leq C_2(d)$.

Let $\gamma \in \Pi(\RR)$ be an arrow starting at $\pi$
and ending at $\pi'$. If $\Gamma \subset \Pi(\RR)$ 
then we define

\be \nonumber
P_q(\Gamma \di \gamma)=
\frac{\nu_{\pi',B_\gamma \cdot q}(\e_{\Gamma_\gamma,q})}
{\nu_{\pi,q}(\e_{\gamma,q})}
\ee
and
\be \nonumber
P_q(\gamma \di \pi)=
\frac{\nu_{\pi',B_\gamma \cdot q}(\e_{\gamma,q})}{\nu_{\pi,q}(\e_{\pi,q})}.
\ee
We have that $P_q(\Gamma \di \gamma)=P_{B_\gamma \cdot q}(\Gamma_\gamma \di \pi')$.

We define a partial order in the set of paths as follows. Let $\gamma, \gamma_s \in \Pi(\RR)$ 
be two paths. We say that $\gamma_s \leq \gamma$ if and only if 
$\gamma_s$ is the start of $\gamma$.
A family $\Gamma_s \subset \Pi(\RR)$ is called \emph{disjoint} if no two
elements are comparable by the partial order defined before.
If $\Gamma_s$ is disjoint and $\Gamma \subset \Pi(\RR)$ is a family 
such that any $\gamma \in \Gamma$ starts by some element
$\gamma_s \subset \Gamma_s$, then for every $\pi \in \RR$

\be \nonumber
P_q(\Gamma \di \pi)=\sum_{\gamma_s \in \Gamma_s} 
P_q(\Gamma \di \gamma_s)P_q(\gamma_s \di \pi) 
\leq P_q(\Gamma_s \di \pi)\sup_{\gamma_s \in \Gamma_s} 
P_q(\Gamma \di \gamma_s).
\ee

\begin{lem}\label{Pgamma}
There exists $C_3(d)<1$ with the following property.  Let
$q \in \R^{\BA}_+$, $\gamma \in \Pi(\RR)$ be 
an arrow starting at
$\pi$ with loser $\beta$. If $C \geq 1$ is such that
$q_{\bbeta}>C^{-1}M(q)$ then
\be \nonumber
P_q(\gamma \di \pi)>C_3(d) C^{-(d-1)}.
\ee
\end{lem}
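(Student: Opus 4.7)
The plan is to bound $P_q(\gamma\mid\pi)$ by comparing the two volumes $\nu_{\pi,q}(\Lambda_{\gamma,q})$ and $\nu_{\pi,q}(\Lambda_{\pi,q})$ through the linear shear $\sigma=(B_\gamma^*)^{-1}$. By the compatibility of the volume forms $\omega^{v_\pi}$ and $\omega^{v_{\pi'}}$ established in Section~\ref{recall}, this map carries $\Lambda_{\gamma,q}\subset\mathcal{S}_\pi$ bijectively onto $\Lambda_{\pi',B_\gamma q}\subset\mathcal{S}_{\pi'}$ (where $\pi'$ is the endpoint of $\gamma$) and pulls back $\Leb_{\pi'}$ to $\Leb_\pi$, so that
\[
\nu_{\pi,q}(\Lambda_{\gamma,q})=\nu_{\pi',B_\gamma q}(\Lambda_{\pi',B_\gamma q}).
\]

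The quantitative input is the componentwise inequality $B_\gamma q\le(1+C)q$, obtained from $B_\gamma q=q+q_{\balpha}e_{\bbeta}$ together with $q_{\balpha}\le M(q)<Cq_{\bbeta}$. Integrating out the radial coordinate of the star-shaped region $\Lambda_{\pi,q}$ gives the ray-integration formula
\[
\nu_{\pi,q}(\Lambda_{\pi,q})=\frac{1}{d-1}\int_{\hat\Lambda_\pi}\langle\hat\lambda,q\rangle^{-(d-1)}d\hat\sigma
\]
for any fixed section $\hat\Lambda_\pi$ of $\mathcal{S}_\pi^+$; applying this on $\mathcal{S}_{\pi'}^+$ and using $\langle\hat\mu,B_\gamma q\rangle\le(1+C)\langle\hat\mu,q\rangle$ pointwise in the integrand yields
\[
\nu_{\pi',B_\gamma q}(\Lambda_{\pi',B_\gamma q})\ge(1+C)^{-(d-1)}\nu_{\pi',q}(\Lambda_{\pi',q}).
\]

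It remains to compare $\nu_{\pi',q}(\Lambda_{\pi',q})$ with $\nu_{\pi,q}(\Lambda_{\pi,q})$. When the winner $\alpha$ is a \emph{simple} letter of $\pi$, one has $e_{\balpha}\in\mathcal{S}_\pi$, so $\sigma$ actually preserves the subspace $\mathcal{S}_\pi=\mathcal{S}_{\pi'}$, the two simplices coincide, and the lemma follows with $C_3(d)=2^{-(d-1)}$ (using $1+C\le 2C$). When $\alpha$ is a \emph{double} letter, however, $\mathcal{S}_\pi\ne\mathcal{S}_{\pi'}$ and $\sigma(\Lambda_{\pi,q})$ properly contains $\Lambda_{\pi',B_\gamma q}$ in $\mathcal{S}_{\pi'}$ by the excess region
\[
E=\{\mu\in\mathcal{S}_{\pi'}\tq -\mu_{\bbeta}\le\mu_{\balpha}<0,\ \mu_{\bxi}\ge 0\text{ for }\bxi\ne\balpha,\ \langle\mu,B_\gamma q\rangle<1\}.
\]
Since $\nu_{\pi,q}(\Lambda_{\pi,q})=\nu_{\pi',B_\gamma q}(\Lambda_{\pi',B_\gamma q})+\Leb_{\pi'}(E)$, it suffices to show $\Leb_{\pi'}(E)\le K(C,d)\,\nu_{\pi',B_\gamma q}(\Lambda_{\pi',B_\gamma q})$ with $K$ polynomial of degree at most $d-1$ in $C$; this is the main obstacle. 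I would establish it by Cavalieri-slicing $E$ along the $\mu_{\balpha}$-coordinate, and on each slice using the translated $\mathcal{S}_{\pi'}$-constraint together with $B_\gamma q\le(1+C)q$ to match against the corresponding slice of $\Lambda_{\pi',B_\gamma q}$. The delicate point is carrying out this slicing cleanly when $\balpha$ appears with nonzero coefficient in the defining equation of $\mathcal{S}_{\pi'}$, which forces the slice constraint to translate as $\mu_{\balpha}$ varies.
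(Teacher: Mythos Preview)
Your argument for the case when the winner $\alpha$ is \emph{simple} is correct and rather neat: once $(v_\pi)_{\balpha}=0$ forces $\mathcal{S}_\pi=\mathcal{S}_{\pi'}$ and $\Leb_\pi=\Leb_{\pi'}$, the ray--integration identity together with the componentwise bound $B_\gamma q\le(1+C)q$ gives $P_q(\gamma\mid\pi)\ge(1+C)^{-(d-1)}$ directly. This is a genuinely different route from the paper, which instead tracks how each elementary subsimplex of $\Lambda_{\pi,q}$ is mapped by $B_\gamma$ and reads off the volume ratio from the vertex weights; your version is more coordinate-free and arguably cleaner in this case.

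The double--winner case, however, is not finished. You reduce the problem to showing $\Leb_{\pi'}(E)\le K(C,d)\,\Leb_{\pi'}(\Lambda_{\pi',B_\gamma q})$ with $K$ of degree $\le d-1$ in $C$, and then stop, proposing a Cavalieri slicing in $\mu_{\balpha}$ while flagging the ``delicate point'' that the $\mathcal{S}_{\pi'}$-constraint has a nonzero $\balpha$-coefficient (it equals $(v_{\pi'})_{\balpha}=\pm 2$ here). That difficulty is real: each slice of $E$ at $\mu_{\balpha}=-t$ lives in a translate of a $(d-2)$-dimensional cone cut by an affine hyperplane whose constant shifts with $t$, and matching these slices against the corresponding slices of $\Lambda_{\pi',B_\gamma q}$ requires exactly the kind of vertex--weight bookkeeping you are trying to avoid. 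No explicit bound on $K$ is actually produced.

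The paper sidesteps the excess region entirely. In the hardest case (winner and loser both double) it chooses a single elementary subsimplex $\Lambda\subset\Lambda_{\pi,q}$ with $\Leb_\pi(\Lambda)\ge\Leb_\pi(\Lambda_{\pi,q})/C_1(d)$, observes that $B_\gamma$ carries all vertices of $\Lambda$ except those of type $\{\bx,\bbeta\}$ with $\bx\ne\balpha$ to vertices of $\Lambda_{\pi',B_\gamma q}$, and then completes this linearly independent set to the vertex set of an elementary subsimplex $\Lambda'\subset\Lambda_{\pi',B_\gamma q}$. The volume ratio $\Leb_{\pi'}(\Lambda')/\Leb_\pi(\Lambda)$ is then a ratio of products of vertex weights, with the replaced weights bounded below by $C^{-1}M(q)$ and the new ones above by $4M(q)$, giving the $(4C)^{1-d}$ factor. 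This is what you should do: rather than bound the excess $E$, exhibit one concrete subsimplex inside $\Lambda_{\pi',B_\gamma q}$ whose volume is a controlled fraction of $\Leb_\pi(\Lambda_{\pi,q})$.
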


\begin{proof}
Let $\alpha$ be the winner of $\gamma$ and let $\pi'$ 
be the end of $\gamma$. 
Let $\e$ be an elementary subsimplex of $\e_{\pi,q}$. 
We are going to show that there exists an elementary subsimplex 
$\e' \subset B_\gamma (\e)$ 
of $\e_{\pi',B_\gamma \cdot q}$
such that $\Leb_{\pi'}(\e') \geq C'^{-1} \Leb_{\pi}(\e)$, which implies
the result by decomposition.

We will separate the proof in four cases depending on whether the
winner and the loser are simple or double.

%Caso I
Suppose that $\alpha$ and $\beta$ are simple.
Let $\e$ be an elementary subsimplex of $\e_{\pi,q}$.
Then $\e'=B_\gamma \cdot
(\e_{\pi,q} \cap \e)$ is an elementary subsimplex of
$\e_{\pi',B_\gamma \cdot q}$.  The
set of vertices of $\e'$ differs from the set of vertices of $\e$ just by
replacing the vertex $q_{\bbeta}^{-1} e_{\bbeta}$ 
by $(q_{\balpha}+q_{\bbeta})^{-1}
e_{\bbeta}$.  It follows that
$\Leb_{\pi'}(\e')/\Leb_\pi(\e)=q_{\bbeta}/
(q_{\balpha}+q_{\bbeta})>1/(C+1)$.
By considering a
decomposition into elementary subsimplices we conclude that
$P_q(\gamma|\pi)=q_{\bbeta}/(q_{\balpha}+q_{\bbeta})>1/(C+1)$.

%Caso II
Suppose that the winner is simple and the loser is double.
Let $\e$ be an elementary subsimplex of $\e_{\pi,q}$.
Then $\e'=B_\gamma \cdot
(\e_{\pi,q} \cap \e)$ is an elementary subsimplex of $\e_{B_\gamma \cdot q}$.
The set of vertices of $\e'$ differs from the set of vertices of
$\e$ just by replacing the vertices $(q_{\bx}+q_{\bbeta})^{-1}
(e_{\bx}+e_{\bbeta})$ by $(q_{\balpha}+q_{\bx}+q_{\bbeta})^{-1}
(e_{\bx}+e_{\bbeta})$.  It follows that
$\Leb_{\pi'}(\e')/\Leb_\pi(\e)=
\prod (q_{\bx}+q_{\bbeta})/(q_{\balpha}+q_{\bx}+q_{\bbeta})>1/(1+C)$,
where the product is over all $\bx$ such that $\e$ has a vertex of type
$\{\bx,\bbeta\}$.  Thus $P_q(\gamma|\pi)>1/(C+1)$.

%Caso III
If the winner is double and the loser is simple, let $\gamma'$ be the other
arrow starting at $\pi$.  Analogous to the
previous case, $P_q(\gamma' \di \pi)=
\prod (q_{\bx}+q_{\balpha})/(q_{\balpha}+q_{\bx}+q_{\bbeta})
<\prod \left(1-q_{\bbeta}/(q_{\balpha}+q_{\bx}+q_{\bbeta})\right)
<2C/(1+2C)$, so
$P_q(\gamma|\pi)>1/(1+2C)$.

%Case IV
Finally, suppose that the winner and the loser 
are both double.  Let $\e$ be an elementary subsimplex with
$\Leb_\pi(\e) \geq \Leb_\pi(\e_{\pi,q})/C_1(d)$.
Let $Z$ be the set of vertices of $\e$ and let $\tilde Z
\subset Z$ be the set of double vertices
of type $\{q_{\bx},q_{\bbeta}\}$ with $\bx \neq \balpha$. 
Notice that $B_\gamma \cdot
(Z \backslash \tilde Z)$ is a subset of the set of vertices of 
$\e_{\pi',B_\gamma
\cdot q}$.  Since $Z \backslash \tilde Z$ is linearly independent, 
$B_\gamma \cdot
(Z \backslash \tilde Z)$ is also.  Thus there exists an elementary
subsimplex $\e'$ of $\e_{\pi',B_\gamma \cdot q}$
whose set $Z'$ of vertices contains $B_\gamma \cdot
(Z \backslash \tilde Z)$.  Let $\tilde Z'=Z' \backslash B_\gamma \cdot
(Z \backslash \tilde Z)$.
The weight of a vertice $v \in Z \backslash \tilde
Z$ is the same weight as the weight of $B_\gamma \cdot v$.  Notice that each
vertex of $\tilde Z$ has weight at least $C^{-1} \M(q)$ and each vertex of
$\tilde Z'$ has weight at most $2\M(B_\gamma \cdot q) \leq 4 \M(q)$.  Thus
\be \nonumber
\frac {\Leb_{\pi'}(\e')} {\Leb_\pi(\e)}=\frac{k(\pi',\e')} {k(\pi,\e)} \frac {\prod_{v
\in \tilde Z}
w(v)} {\prod_{v \in \tilde Z'} w(v)}>C_2(d)^{-1} (4 C)^{1-d}.
\ee
Thus $P_q(\gamma|\pi)>C_1(d)^{-1} C_2(d)^{-1} (4C)^{1-d}$.
\end{proof}

The proof of the recurrence estimates is based on the analysis of the Rauzy
renormalization map. The key step involves a control on the measure of
sets which present big distortion after some long (Teichm\"{u}ller) time.

%\begin{definition}
%Let $\gamma=\gamma_1 \ldots \gamma_n \in \Pi(\RR)$ be a path joining 
%the permutations $\pi_0, \ldots, \pi_n$. If there are $1 \leq i<j\leq n$ 
%such that $\pi_i=\pi_j$ then we say that $\gamma_i \ldots \gamma_j$ is a 
%\emph{cycle}. 
%\end{definition}

\begin{thm} \label {4}

There exists $C_4(d)>1$ with the following property.  
Let 
%$\AA' \subset \AA$ be a non-empty proper subset, 
$q \in \R^\BA_+$.
Then for every $\pi \in \RR$,
\be \nonumber
P_q(\gamma \in \Pi(\RR),\, \M(B_\gamma \cdot q)>C_4(d) \M(q) \text { and }
\m(B_\gamma \cdot q)<\M(q) \di \pi) < 1-C_4(d)^{-(d-1)}.
\ee

\end{thm}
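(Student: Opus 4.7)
The plan is to prove the bound by exhibiting, for every $q \in \R^\BA_+$ and $\pi \in \RR$, a specific ``good'' path $\gamma_\ast = \gamma_\ast(\pi,q)$ starting at $\pi$ such that: (a) at its endpoint one has $\m(B_{\gamma_\ast} q) \geq \M(q)$; (b) along $\gamma_\ast$ the maximum satisfies $\M \leq C_4(d)\M(q)$ for a suitable $C_4(d)$; and (c) $P_q(\gamma_\ast \mid \pi) \geq C_4(d)^{-(d-1)}$. Since the $\lambda$'s whose trajectory begins with $\gamma_\ast$ form a set of relative $\nu_{\pi,q}$-measure $P_q(\gamma_\ast \mid \pi)$ and none of them ever reaches the bad region (where $\M > C_4\M(q)$ \emph{and} $\m < \M(q)$ simultaneously), the complementary bound follows.

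To arrange (a) and (b), I take $\gamma_\ast$ to be a $(2d-3)$-complete path starting at $\pi$ of minimal length. By finiteness of the Rauzy class, this length is bounded by some $N(d)$ depending only on $d$. By Lemma~\ref{complete}, $B_{\gamma_\ast}$ has strictly positive integer entries, so $(B_{\gamma_\ast}q)_{\balpha} \geq \M(q)$ for every $\balpha$, giving $\m(B_{\gamma_\ast}q) \geq \M(q)$. Since a single Rauzy arrow at most doubles $\M$ (the loser's new entry is at most $2\M^{(k-1)}$), we have $\M^{(k)} \leq 2^{N(d)}\M(q)$ along $\gamma_\ast$; setting $C_4(d) := 2^{N(d)}$ (or larger) secures (b).

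For (c), applying Lemma~\ref{Pgamma} to each arrow yields
\begin{equation*}
P_q(\gamma_\ast \mid \pi) \;\geq\; C_3(d)^{|\gamma_\ast|}\prod_{k=1}^{|\gamma_\ast|}\left(\frac{q^{(k-1)}_{\bbeta_k}}{\M^{(k-1)}}\right)^{d-1},
\end{equation*}
where $\bbeta_k$ denotes the loser and $q^{(k-1)}$, $\M^{(k-1)}$ the $q$-vector and its maximum after the first $k-1$ arrows of $\gamma_\ast$. The main obstacle is that a naive choice of $\gamma_\ast$ gives a product as small as $(\m(q)/\M(q))^{N(d)(d-1)}$, which is \emph{not} bounded below independently of $q$. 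To get around this, I would refine the construction by a greedy rule: at each step, among the (at most two) arrows defined at the current permutation, prefer the arrow whose loser has the larger current $q$-entry, while still demanding that the accumulated path become $(2d-3)$-complete within $N(d)$ arrows. The claim is that along this greedy path the loser-to-maximum ratio $q^{(k-1)}_{\bbeta_k}/\M^{(k-1)}$ is bounded below by a constant $c(d) > 0$ at all but at most $O(d)$ ``resolving'' steps — at such a step a previously small letter reaches an endpoint for the first time and, after being the loser, acquires an entry of size comparable to $\M^{(k-1)}$ (hence contributes a small factor only once per letter).

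The technical heart of the proof — and what I expect to be the main obstacle — is the combinatorial verification of this greedy scheme: namely, showing that $(2d-3)$-completeness can be achieved within $N(d)$ greedy steps regardless of $q$, and that the cumulative contribution of the $O(d)$ resolving arrows together with the ``large-loser'' arrows is bounded below by a constant depending only on $d$. Once this uniform lower bound $P_q(\gamma_\ast\mid\pi) \geq c'(d)$ is obtained, one sets $C_4(d)^{-(d-1)} = c'(d)$, enlarging $C_4(d)$ beyond $2^{N(d)}$ if necessary to absorb constants.
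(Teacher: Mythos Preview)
Your approach has a genuine gap that I believe cannot be repaired: no single path $\gamma_\ast$ can have $P_q(\gamma_\ast\mid\pi)$ bounded below independently of $q$, which is exactly what your step (c) demands.

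Here is the obstruction. For $(B_{\gamma_\ast} q)_\bbeta \geq \M(q)$ to hold for a letter $\bbeta$ with initially tiny coordinate (say $q_\bbeta = \epsilon \M(q)$), the letter $\bbeta$ must appear as the \emph{loser} of some arrow of $\gamma_\ast$ (that is the only way its coordinate grows under $q \mapsto B_\gamma q$). Consider the first such arrow. At that moment the loser coordinate is still $\epsilon \M^{(k-1)}$ up to a bounded factor, and in the case where both winner and loser are simple the proof of Lemma~\ref{Pgamma} gives the \emph{exact} transition probability $q^{(k-1)}_\bbeta/(q^{(k-1)}_\balpha+q^{(k-1)}_\bbeta)$, which is of order $\epsilon$. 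Hence $P_q(\gamma_\ast\mid\pi)\leq C\epsilon$ regardless of how cleverly the rest of $\gamma_\ast$ is chosen. Your greedy rule does not avoid this: it only postpones the ``resolving step'' for $\bbeta$, but cannot make its contribution bounded away from zero. Your claim that the cumulative contribution of the $O(d)$ resolving arrows is bounded below by a constant depending only on $d$ is therefore false.

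The paper circumvents this by never fixing a single path. It runs an induction on the number $k$ of involution classes already known to satisfy $(B_\gamma q)_\bxi \geq \M(q)$. At the inductive step one waits until the \emph{stopping time} where $\M$ first exceeds a fixed multiple of its current value; this stopping time may require arbitrarily many arrows when small coordinates are present. One then shows that the \emph{set} of paths along which every loser stayed inside the already-large set $\AA'$ has conditional probability at most $1/(2d)$ (a volume computation on a single simplex), so on its complement some new letter outside $\AA'$ has been a loser and hence has acquired a coordinate $\geq \M(q)$. The point is that while each individual arrow making a small letter lose has tiny probability, the stopping rule allows arbitrarily many chances, and the total probability that it \emph{never} happens is what gets bounded. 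Your single-path construction forgoes exactly this accumulation mechanism.
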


\begin{proof}
For $1\leq k\leq d$, let $m_k(q)=\max m_{\AA'}(q)$ where the maximum is taken 
over all involution invariant sets $\AA' \subset \AA$ such that 
$\# \AA'=2k$. In particular $m=m_d$. We will show that for $1\leq k\leq d$
there exists $D>1$ such that
\be \label{estimativa}
P_q(\gamma \in \Pi(\RR),\, \M(B_\gamma \cdot q)>D \M(q) \text { and }
\m_k(B_\gamma \cdot q)<\M(q) \di \pi) < 1-D^{-(d-1)}.
\ee
(the case $k=d$ implies the desired statement). 
The proof is by induction 
on $k$. For $k=1$ it is obvious, by lemma \ref{Pgamma}. Assume that it is proved for some 
$1\leq k < d$ with $D=D_0$. Let $\Gamma$ be the set of minimal paths 
$\gamma$ starting at $\pi$ with $\M(B_\gamma \cdot q)>D_0 \M(q)$. Then
there exists $\Gamma_1 \subset \Gamma$ with $P_q(\Gamma_1 \di \pi)>D_1^{-(d-1)}$
and an involution invariant set $\AA' \subset \AA$ with $\# \AA'=2k$ 
such that if $\gamma \in \Gamma_1$ then 
$m_{\AA'}(B_{\gamma}\cdot q)\geq M(q)$.

For $\gamma_s \in \Gamma_1$, choose a path $\gamma=\gamma_s \gamma_e$
with minimal length such that $\gamma$ ends at a permutation $\pi_e$
such that either the first or the last element of $\pi_e$ is an element of 
$\AA \backslash \AA'$. Let $\Gamma_2$ be the collection of the 
$\gamma=\gamma_s \gamma_e$ thus obtained. 
Then $P_q(\Gamma_2 \di \pi)>D_2^{-(d-1)}$ and $\M(B_\gamma \cdot q)<D_2\M(q)$
for $\gamma \in \Gamma_2$.

Let $\Gamma_3$ be the space of minimal paths 
$\gamma=\gamma_s \gamma_e$ with $\gamma_s \in \Gamma_2$ and 
$M(B_\gamma \cdot q)>2dD_2\M(q)$. Let $\Gamma_4 \subset \Gamma_3$
be the set of all $\gamma=\gamma_s \gamma_e$ where all 
the arrows of $\gamma_e$ have as loser an element of $\AA'$. 
For each $\gamma_s \in \Gamma_2$, there exists at most one 
$\gamma=\gamma_s \gamma_e \in \Gamma_4$, and if  
$P_q(\Gamma_4 \di \gamma_s)< \frac{1}{2d}$, it follows that 
$P_q(\Gamma_3 \backslash \Gamma_4 \di \pi)>\left(1-\frac{1}{2d}\right)D_2^{-(d-1)}$. 
It remains to prove that $P_q(\Gamma_4 \di \gamma_s)< \frac{1}{2d}$.

Let $\gamma=\gamma_s\gamma_e \in \Gamma_4$ such that $\gamma_s \in \Gamma_2$. 
Let $\pi_e \in \Pi(\RR)$ be the end of $\gamma_s$ and let $\alpha$ and $\beta$ 
be the winner and the loser of $\pi_e$, respectively. 
By definition, we have that $\alpha \in \AA \backslash \AA'$ 
and $\beta \in \AA'$. Besides, all losers of $\gamma_e$ are in $\AA'$. 

We claim that $\alpha$ is simple. Suppose this is not the case. 
Assume, without lost of generality, $\pi_e(\alpha)=1$ and 
$\pi_e(\beta)=2d+1$. Applying Rauzy algorithm with involution 
one time we would obtain 
$\pi'_e(\beta)<\pi'_e(*)$ and to keep the same winner $\alpha$ 
we just can apply Rauzy algorithm with involution at most $2d-4$ times. But even if we could 
apply Rauzy algorithm with involution those number of times, we will have
\be \nonumber
\M(B_\gamma \cdot q)<(2d-3)D_2\M(q)<2dD_2\M(q)
\ee
what contradicts that $\gamma \in \Gamma_3$. 
Then $\alpha$ is simple as we claim.

%We can observe that after $2d+1-|\pi(i(\alpha))-\pi(\alpha)|$ operations 
%of the same type, we obtain the same combinatorial data $\pi_e$. So, 
%we obtain a cycle.

Suppose $\gamma=\gamma_1\ldots\gamma_n$ where $\gamma_i \in \Pi(\RR)$ 
are arrows joining $\pi_e^{(i-1)}$ and $\pi_e^{(i)}$. 
We have $\gamma_s \in \Gamma_2$, so, to obtain 
$\M(B_\gamma \cdot q)>2dD_2\M(q)$ we need $n \geq 2d+1$. We 
also have $\mathcal{S}_{\pi_e}=\mathcal{S}_{\pi_e^{(i)}}$ for all $i\in \{1,\ldots,n\}$.

Let $\e=\e_{\pi_e,B_{\gamma_s} \cdot q}$ which is a finite union of 
elementary simplices $\e_j$. Let $\beta_0=\beta$ and $\beta_i$ be 
the loser of $\pi_e^{(i)}$ for $i=0,\ldots,n$. For each $i\in \{0,\ldots,n\}$ 
if $\beta_i$ is a simple vertex then all $\e_j$ has a vertex of 
type $\bbetai$ and if $\beta_i$ is double then all $\e_j$ 
has a vertex of type $\{\bbetai,\bx\}$ for some 
$\bx \notin \{\balpha,\bbetai\}$.
Let $\e_j^{(n)}=B_{\gamma_e}(\e_j)$. Notice that the type of 
vertices of $\e_j^{(n)}$ coincides with type of vertices of 
$\e_j$. Let $Z$ be the set of vertices of $\e_j$ and 
$Z^{(n)}$ be the set of vertices of $\e_j^{(n)}$. Then 
\be \label{PGamma_4}
\frac {\Leb_{\pi^{(n)}_e}(\e_j^{(n)})}{\Leb_{\pi_e}(\e_j)}=
\frac {\prod_{v \in Z} w(v)} 
{\prod_{v \in Z^{(n)}} w(v)}
\ee
But $\M(B_{\gamma_s} \cdot q)<D_2\M(q)$ and $\M(B_{\gamma}\cdot q)>2dD_2\M(q)$, 
so there is one term in (\ref{PGamma_4}) which is less then $\frac{1}{2d}$. 
Thus
\be \nonumber
P_q(\Gamma_4 \di \gamma_s)<\frac{1}{2d}
\ee
Let $\gamma=\gamma_s\gamma_e \in \Gamma_3 \backslash \Gamma_4$. 
Let us show that $m_{k+1}(B_\gamma \cdot q)>M(q)$, 
which implies (\ref{estimativa}) with $k+1$ in place of $k$ 
and $D=2dD_2$. Assume that this is not the case. In this case, 
the last arrow composing $\gamma_e$ must have as loser an 
element of $\AA'$. Moreover, no arrow composing $\gamma_e$ 
has as winner an element of $\AA'$ (otherwise, the loser $\beta$ 
of the first such arrow does not belong to $\AA'$ 
and is such that 
$m_{\AA' \cup \{\beta\}}(B_\gamma \cdot q)>\M(q)$). 
Let $\gamma_e=\gamma_{e,s}\gamma_{e,e}$ where $\gamma_{e,s}$ 
is maximal such that all losers of $\gamma_{e,s}$ are in $\AA'$. 
Then all losers in $\gamma_{e,s}$ are distinct and 
$\M(B_{\gamma_s\gamma_{e,s}} \cdot q) <2D_2\M(q)$. Let 
$\gamma_{e,e}=\gamma_{e,1}\ldots\gamma_{e,l}$ where
$\gamma_{e,j}=\gamma_{e,j,s}\gamma_{e,j,e}$ with $\gamma_{e,j,s}$ and 
$\gamma_{e,j,e}$ non-trivial such that all the losers of $\gamma_{e,j,s}$ 
are in $\AA \backslash \AA'$ and all the losers of $\gamma_{e,j,e}$ 
are in $\AA'$. Let $\gamma_j=\gamma_s\gamma_{e,s}\gamma_{e,1}\ldots\gamma_{e,j}$,
 $0\leq j \leq l$. Notice that for each $j$, $\gamma_{e,j,e}$ 
has distinct losers, and the same winner $\alpha \in \AA \backslash \AA'$ 
which is also the last winner of $\gamma_{e,j,s}$.
Let $\beta \in \AA \backslash \AA'$ be the last loser of $\gamma_{e,j,s}$. 
Then
\be \nonumber
\M(B_{\gamma_{j+1}}\cdot q) - \M(B_{\gamma_j}\cdot q) \leq
\M_{\bbeta}(B_{\gamma_{j+1}}\cdot q) - \M_{\bbeta}(B_{\gamma_j}\cdot q)
\ee
which implies that
\be \nonumber
(2d-1)D_2\M(q) \leq \M(B_\gamma \cdot q) - \M(B_{\gamma_s}\cdot q) 
\leq
\sum_{\beta \in \AA \backslash \AA'}\M_\beta(B_\gamma \cdot q) - 
\M_\beta(B_{\gamma_s}\cdot q) \leq dD_2\M(q)
\ee
which is a contradiction.
\end{proof}

\section{Recurrence estimates}
\label{sec:proof_rec}

\begin{lemma}
\label{lem61}
For every $\hat \gamma \in \Pi(\RR)$, there exist $M \geq 0$, $\rho<1$
such that for every
$\pi \in \RR$, $q \in \R^\BA_+$,
\be \nonumber
P_q(\gamma \text { can not be written as }
\gamma_s \hat \gamma \gamma_e
\text { and } \M(B_\gamma \cdot q)>2^M \M(q) \di \pi)
\leq \rho.
\ee
\end{lemma}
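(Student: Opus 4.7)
The plan is to combine Theorem~\ref{4} and Lemma~\ref{Pgamma}: Theorem~\ref{4} says the vector $B_\gamma\cdot q$ becomes \emph{balanced} (i.e.\ $\m\ge\M(q)$) with positive probability by the time $\M$ has grown by a factor $C_4(d)$, and in a balanced state Lemma~\ref{Pgamma} gives a uniform lower bound on the $P_q$-probability of following any prescribed path of bounded length. Using strong connectivity of the Rauzy diagram, I would first fix, for each $\pi'\in\RR$, a path $\gamma_{\pi'}$ of length at most a uniform $L=L(\hat\gamma)$ starting at $\pi'$ and containing $\hat\gamma$ as a subpath. The point is that once the state is balanced, we can steer the trajectory through $\hat\gamma$ in a controlled number of steps.

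Let $\Gamma_g$ denote the family of minimal paths $\gamma_s$ starting at $\pi$ with $\M(B_{\gamma_s}\cdot q)>C_4(d)\M(q)$ \emph{and} $\m(B_{\gamma_s}\cdot q)\ge\M(q)$; by Theorem~\ref{4}, $P_q(\Gamma_g\di\pi)\ge C_4(d)^{-(d-1)}$. For $\gamma_s\in\Gamma_g$ ending at $\pi'_s$, the vector $q':=B_{\gamma_s}\cdot q$ satisfies $\m(q')\ge\M(q)$ and $\M(q')\le 2C_4(d)\M(q)$, since one arrow never decreases $\m$ and at most doubles $\M$. Applying Lemma~\ref{Pgamma} successively to the $\le L$ arrows of $\gamma_{\pi'_s}$: at step $i$ the loser has $q$-coordinate $\ge\m(q')\ge\M(q)$, while the maximum is $\le 2^iC_4(d)\M(q)$, so the constant $C_i$ in Lemma~\ref{Pgamma} may be taken as $2^iC_4(d)$, giving
\[
P_{q'}\bigl(\gamma_{\pi'_s}\di\pi'_s\bigr)\;\ge\;\prod_{i=1}^{L}C_3(d)\bigl(2^iC_4(d)\bigr)^{-(d-1)}\;=:\;c\;>\;0,
\]
a constant depending only on $\hat\gamma$ and $d$. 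By disjointness of $\Gamma_g$ and the Markov property of $P_q$ under concatenation, the family $\Gamma_1=\{\gamma_s\gamma_{\pi'_s}:\gamma_s\in\Gamma_g\}$ satisfies $P_q(\Gamma_1\di\pi)\ge c_1:=c\,C_4(d)^{-(d-1)}$; every path in $\Gamma_1$ contains $\hat\gamma$ and satisfies $\M(B_\gamma\cdot q)\le 2^{M_0}\M(q)$ for a fixed $M_0=M_0(\hat\gamma,d)$.

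To finish, I would iterate. Conditioning on the event ``$\hat\gamma$ has not appeared yet and $\M$ has already grown by factor $2^{kM_0}$'', the previous step applied to the current state bounds the conditional probability of still avoiding $\hat\gamma$ during the next $2^{M_0}$-fold growth of $\M$ by $1-c_1$. After $K$ rounds the overall failure probability is at most $(1-c_1)^K$, so choosing $K$ with $(1-c_1)^K\le\rho$ and setting $M=KM_0$ yields the lemma. The main technical point I expect to wrestle with is the measure-theoretic bookkeeping across the iteration; it reduces to the concatenation identity $P_q(\gamma_s\gamma_e\di\pi)=P_q(\gamma_s\di\pi)\,P_{B_{\gamma_s}\cdot q}(\gamma_e\di\pi'_s)$ and the fact that minimal paths with $\M(B_\gamma\cdot q)>2^{kM_0}\M(q)$ partition $\Delta_\pi$ up to a null set, both of which are standard for the renormalization process.
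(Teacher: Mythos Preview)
Your approach is correct and essentially the same as the paper's. Both arguments (i) use Theorem~\ref{4} to find, with definite probability, a disjoint family of paths after which the vector $B_{\gamma_s}\cdot q$ is balanced ($\m\ge\M(q)$ while $\M$ has only grown by a bounded factor), and then (ii) steer through a fixed connecting path $\gamma_{\pi'}$ ending in $\hat\gamma$, using that in the balanced regime this has uniformly positive conditional probability. The only real difference in execution is step~(ii): the paper bounds $P_{q'}(\gamma_{\pi'}\di\pi')$ in one shot via an elementary-subsimplex volume comparison (the weight of every vertex of $\e_{\pi',q'}$ lies between $\M(q)$ and $2^{2M_0}\M(q)$, giving a ratio $\ge 2^{-2(d-1)M_0}$), whereas you obtain the same conclusion by chaining Lemma~\ref{Pgamma} arrow by arrow. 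Both are valid; your route is a bit more modular, the paper's a bit more direct.

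One remark: your Step~5 (the $K$-fold iteration to drive the failure probability down to $(1-c_1)^K$) is unnecessary for Lemma~\ref{lem61} as stated, which only asks for \emph{some} $\rho<1$ and $M$. Steps~1--4 already give $\rho=1-c_1$ and $M=M_0$, once you note that every minimal path with $\M>2^{M_0}\M(q)$ not containing $\hat\gamma$ has $\Delta$-cylinder disjoint from every $\Delta_{\gamma}$, $\gamma\in\Gamma_1$ (neither can be a prefix of the other, by the constraints on $\M$ and on containing $\hat\gamma$). The iteration you sketch is exactly the content of Proposition~\ref{6.2}, which the paper proves separately from Lemma~\ref{lem61}.
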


\begin{proof}
Fix $M_0 \geq 0$ large and let $M=2 M_0$.
Let $\Gamma$ be the set of all minimal paths $\gamma$ starting at $\pi$
which can not be written as $\gamma_s \hat \gamma \gamma_e$ and such that
$\M(B_\gamma \cdot q)>2^M \M(q)$. Any path
$\gamma \in \Gamma$ can be written as
$\gamma=\gamma_1 \gamma_2$ where $\gamma_1$ is minimal with $\M(B_{\gamma_1}
\cdot q)>2^{M_0} \M(q)$.  Let $\Gamma_1$ collect the possible $\gamma_1$. 
Then $\Gamma_1$ is disjoint, by minimality. Let $\tilde \Gamma_1 \subset \Gamma_1$ be the
set of all $\gamma_1$ such that $\M_{\AA'}(B_{\gamma_1} \cdot q) \geq \M(q)$
for all invariant involution set $\AA' \subset \AA$ non-empty.  
By Theorem \ref {4}, if $M_0$ is sufficiently large we have
\be \nonumber
P_q(\Gamma_1 \backslash \tilde \Gamma_1 \di \pi)<\frac {1} {2}.
\ee

For $\pi_e \in \RR$, let $\gamma_{\pi_e}$ be a shortest possible path
starting at $\pi_e$ with $\gamma_{\pi_e}=\gamma_s \hat \gamma$. Let 
$\pi_f$ be the end of $\gamma_{\pi_e}$. If $M_0$ is sufficiently 
large then $\norm{B_{\gamma_{\pi_e}}}<
\frac {1} {d-1} 2^{M_0-1}$.  It follows that if $\gamma_1
\in \Gamma_1$ ends at $\pi_e$ then
\be \nonumber
P_q(\Gamma \di \gamma_1) \leq 1-P_{B_{\gamma_1} \cdot q}(\gamma_{\pi_e} \di
\pi_e).
\ee

Let $\e \subset \e_{\pi_e,B_{\gamma_1}\cdot q}$ be an elementary 
subsimplex with 
\be \nonumber
\Leb_{\pi_e}(\e) \geq \frac{\Leb_{\pi_e}(\e_{\pi_e,B_{\gamma_1}\cdot q})}{C_1(d)}. 
\ee
Choose an elementary subsimplex $\e'$ of $\e_{\pi_f,B_{\gamma_1\gamma_{\pi_e}}\cdot q}$, 
such that for all $\balpha \in \BA$ there exists a vertice $v'$ of $\e'$ 
of type $\balpha$ or of type $\{\balpha, \bxi \}$ for some $\bxi \in \BA$ 
and $\Leb_{\pi_f}(\e')\geq \Leb_{\pi_f}(B_{\gamma_{\pi_e}} 
\cdot \e_{\pi_e,B_{\gamma_1,q}})/C_1(d)$. 
Let $Z$ and $Z'$ be the set of vertices of $\e$ and $\e'$, respectively. 
If furthermore $\gamma_1 \in \tilde \Gamma_1$ then
\be \nonumber
\frac{\Leb_{\pi_f}(\e')}{\Leb_{\pi_e}(\e)}=
\frac{k(\pi_f,\e')\prod_{v \in Z}w(v)}
{k(\pi_e,\e)\prod_{v \in Z'}w(v)} 
\geq \frac {\M(q)^{d-1}} {(2^{2 M_0} \M(q))^{d-1}}=2^{-2 (d-1) M_0}.
\ee
So, $P_{B_{\gamma_1} \cdot q}(\gamma_{\pi_e} \di \pi_e) \geq 
2^{-2(d-1)M_0}$ and $P_q(\Gamma \di \pi) \leq 1-2^{-2(d-1)M_0-1}$.
\end{proof}

\begin{prop} \label {6.2}
For every $\hat \gamma \in \Pi(\RR)$,
there exist $\delta>0$, $C>0$ such that for every $\pi \in \RR$, $q \in
\R^\BA_+$ and for every $T>1$
\be \nonumber
P_q(\gamma \text { can not be written as }
\gamma_s \hat \gamma
\gamma_e \text { and } \M(B_\gamma \cdot q)>T \M(q) \di \pi)
\leq C T^{-\delta}.
\ee
\end{prop}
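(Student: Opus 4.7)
The plan is to upgrade Lemma~\ref{lem61} from a single--scale estimate to polynomial decay by iterating it over geometric scales. Let $M \geq 1$ and $\rho < 1$ be the constants associated to $\hat\gamma$ by Lemma~\ref{lem61} (we may enlarge $M$ so that $M \geq 1$, keeping $\rho$ unchanged), and set $K = 2^M$. Call a path $\gamma$ \emph{$\hat\gamma$-free} if it cannot be written as $\gamma_s\hat\gamma\gamma_e$. Define
\begin{equation*}
p_n \;=\; \sup_{\pi \in \RR,\; q \in \R^\BA_+}\, P_q\bigl(\gamma \text{ is $\hat\gamma$-free and } \M(B_\gamma \cdot q) > K^n \M(q) \,\di\, \pi\bigr).
\end{equation*}
I aim to prove $p_n \leq \rho^{\lfloor n/2 \rfloor}$, which yields the proposition by taking $n = \lfloor \log T / \log K \rfloor$ and observing $\{\M(B_\gamma q) > T\M(q)\} \subset \{\M(B_\gamma q) > K^n \M(q)\}$; this gives the bound with $\delta = -\log\rho/(2M\log 2)$.

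The base case $p_1 \leq \rho$ is Lemma~\ref{lem61}. The inductive step rests on the elementary observation that a single Rauzy arrow sends $q$ to $B_{\gamma'} \cdot q$ by replacing a single coordinate $q_\balpha$ with $q_\balpha + q_\bbeta$, so $\M(B_{\gamma'} \cdot q) \leq 2\M(q)$. Now fix $\pi$, $q$, and a $\hat\gamma$-free path $\gamma$ with $\M(B_\gamma \cdot q) > K^{n+1}\M(q)$. Let $\gamma_1$ be the (unique) minimal prefix of $\gamma$ with $\M(B_{\gamma_1} \cdot q) > K\M(q)$; by the doubling observation applied to the last arrow of $\gamma_1$, we have $\M(B_{\gamma_1} \cdot q) \leq 2K\M(q)$. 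Let $\Gamma_1$ denote the collection of all minimal $\hat\gamma$-free paths starting at $\pi$ whose image exceeds $K\M(q)$; this is a disjoint family with $P_q(\Gamma_1 \di \pi) \leq \rho$ by Lemma~\ref{lem61}. Writing $\gamma = \gamma_1\gamma_2$ and $q_1 = B_{\gamma_1} \cdot q$, the continuation $\gamma_2$ is $\hat\gamma$-free (as a subpath of a $\hat\gamma$-free path) and satisfies
\begin{equation*}
\M(B_{\gamma_2} \cdot q_1) \;=\; \M(B_\gamma \cdot q) \;>\; K^{n+1}\M(q) \;\geq\; \frac{K^{n+1}}{2K}\M(q_1) \;=\; \frac{K^n}{2}\M(q_1) \;\geq\; K^{n-1}\M(q_1),
\end{equation*}
using $K \geq 2$ at the last step. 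Summing over $\gamma_1 \in \Gamma_1$ and applying the inductive hypothesis at $(\pi_1, q_1)$ gives $p_{n+1} \leq \rho \cdot p_{n-1}$, hence $p_n \leq \rho^{\lfloor n/2 \rfloor}$ as claimed.

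The main (modest) obstacle is the factor--of--two loss incurred when a single Rauzy step overshoots the target scale $K\M(q)$; it forces the induction to step by two rather than one, and is the reason for the $\lfloor n/2 \rfloor$ exponent. The final conversion to a bound in $T$ is routine: for arbitrary $T > 1$, the choice $n = \lfloor \log T/\log K\rfloor$ satisfies $n \geq \log T/(M\log 2) - 1$, so $\lfloor n/2 \rfloor \geq \log T/(2M\log 2) - 3/2$, giving
\begin{equation*}
P_q(\gamma \text{ is $\hat\gamma$-free and } \M(B_\gamma \cdot q) > T\M(q) \di \pi) \;\leq\; p_n \;\leq\; \rho^{-3/2}\,T^{-\delta}, \qquad \delta = \frac{-\log\rho}{2M\log 2},
\end{equation*}
with $C = \rho^{-3/2}$. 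For $T \in (1, K)$ the bound holds trivially after enlarging $C$, since $P_q \leq 1$.
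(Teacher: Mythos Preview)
Your proof is correct and follows essentially the same strategy as the paper: iterate Lemma~\ref{lem61} over geometric scales, using minimality to get a disjoint family and the conditional-probability formula to telescope. The only difference is bookkeeping for the factor-of-two overshoot per Rauzy step: the paper absorbs it by spacing scales at $2^{M+1}$ rather than $2^M$, yielding a clean one-step recursion $P_q(\Gamma_{(i+1)}\mid\gamma_{(i)})\leq\rho$ and hence $\rho^k$, whereas you keep $K=2^M$ and compensate with the two-step recursion $p_{n+1}\leq\rho\,p_{n-1}$; both give $T^{-\delta}$ with comparable $\delta$.
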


\begin{proof}
Let $M$ and $\rho$ be as in the previous lemma.  Let $k$ be maximal with $T
\geq 2^{k(M+1)}$.
Let $\Gamma$ be the set of minimal paths
$\gamma$ such that $\gamma$ is not of the form
$\gamma_s \hat \gamma \gamma_e$ and $\M(B_\gamma \cdot q)>2^{k(M+1)} \M(q)$.
Any path $\gamma \in
\Gamma$ can be written as $\gamma_1\ldots\gamma_k$ where
$\gamma_{(i)}=\gamma_1\ldots\gamma_i$ is minimal with $\M(B_{\gamma_{(i)}}
\cdot q)>2^{i (M+1)} \M(q)$.
Let $\Gamma_{(i)}$ collect the $\gamma_{(i)}$.  Then
the $\Gamma_{(i)}$ are disjoint. Moreover, by Lemma \ref{lem61}, 
for all $\gamma_{(i)} \in
\Gamma_{(i)}$,
\be \nonumber
P_q(\Gamma_{(i+1)} \di \gamma_{(i)}) \leq \rho.
\ee
This implies that $P_q(\Gamma \di \pi) \leq \rho^k$.  The result follows.
\end{proof}

\begin{rem}
Notice that in the case of \cite{AGY}, they obtain a better recurrence estimate. 
In fact, they obtain $T^{-(\delta-1)}$ instead $T^{-\delta}$. But our estimate will be 
enough.
%even it is not optimal.
\end{rem}

\section{Construction of an excellent hyperbolic semi-flow}
\label{excelent}

%In this section we will construct an excellent hyperbolic 
%semi-flow over the map $\widehat Q$. Thus, using the 
%results of section \ref{par:define_exp}, we will 
%obtain the exponential mixing for the Veech flow with involution.

%\subsection{Construction of an excellent hyperbolic semi-flow}

\subsection{The Veech flow with involution as a suspension over the Rauzy renormalization}

Let $\widehat \Up_\RR$ be the subset of $\Om_\RR$ of all
$(\lambda,\pi,\tau)$ with $\phi(\lambda,\pi,\tau)=\|\lambda\|=
\sum_{\balpha \in \BA} \lambda_\balpha=1$. We denote by $\widehat \Up_\pi$ 
the connected components of $\widehat \Up_\RR$. Consider
$\widehat \Up^{(1)}_\RR=\Om^{(1)}_\RR \cap \widehat \Up_\RR$ and
$\widehat \Up^{(1)}_\pi=\Om^{(1)}_\RR \cap \widehat \Up_\pi$.  
%Let $\Up^n_\RR \subset \Delta^n_\RR$ be the set of
%$(\lambda,\pi)$ with $\|\lambda\|=1$. 
Let $\widehat m^{(1)}_\RR$ be
the induced Lebesgue measure to $\widehat \Up^{(1)}_\RR$.

We have that $\widehat \Up^{(1)}_\RR$ is transverse to the Veech 
flow with involution on $\Om^{(1)}_\RR$ which is given by a certain 
iterate of the Rauzy induction with involution, after 
applying the flow $\TV_t$. We are interested 
in the first return map $\widehat R$ to this section. 
The domain of $\widehat R$ is the intersection of $\widehat
\Up^{(1)}_\RR$ with the domain of definition of $\widehat Q$, and we have
\be \nonumber
\widehat R(\lambda,\pi,\tau)=(e^r \lambda',\pi',e^{-r} \tau'),
\ee
where $(\lambda',\pi',\tau')=\widehat Q(\lambda,\pi,\tau)$ and
$r=r(\lambda,\pi)=-\ln \|\lambda'\|=-\ln \phi(\widehat Q(\lambda,\pi,\tau))$
is the first return time.  Notice that the map $\widehat R$ 
is a skew-product $\widehat R(\lambda,\pi,\tau)=(R(\lambda,\pi),e^{-r} \tau')$ 
over the non-invertible map $R$ defined by $R(\lambda, \pi)=(e^r \lambda',\pi)$. 
The map $\widehat R$ is called the \emph{Rauzy renormalization map with involution} 
and it preserves the measure $\widehat m^{(1)}_\RR$. The renormalization map $\widehat R$ 
is an ``invertible extension'' of the map $R$.

We can see the Veech flow with involution as a suspension over 
the renormalization map $\widehat R$. In this suspension model, 
we lose the control of the orbits which do not return to 
$\widehat \Up^{(1)}_\RR$. However, this do not cause any 
problem to our considerations because the set of such 
orbits has zero Lebesgue measure.

\subsection{Precompact section}

The suspension model for the Veech flow with involution 
presented above is obtained over a discrete transformation 
$\widehat R$ which is not sufficiently hyperbolic.  
In general, $\widehat R$ can not be expected to be uniformly hyperbolic, 
in fact, it does not even have appropriate distortion properties. This is 
related to the fact that the domain is not compact. The approach 
taken in \cite {AGY} and other recent works as \cite {AF07} and 
\cite {AV3} is to introduce a class of suitably small (precompact 
in $\widehat \Up^{(1)}_\RR$) sections, and to prove that the corresponding 
return maps have good distortion properties.
%We just have weak hyperbolicity because we have considered 
%the section $\widehat \Up^{(1)}_\RR$ which has infinite 
%area. So, it is too large and there is not a good control 
%on distortion. The approach taken in \cite {AGY} and other 
%recent works as \cite {AF07} and \cite {AV3} is to introduce
%a class of suitably small (precompact in $\widehat \Up^{(1)}_\RR$)
%sections with good distortion estimates.

So, we will choose a specific precompact section which is the 
intersection of $\widehat \Up^{(1)}_\RR$ with
(finite unions of) sets of the form $\Delta_\gamma \times \Ga_{\gamma'}$. 
Let $\gamma$ be a path starting at $\pi_s$ and ending at $\pi_e$.  
Precompactness in the $\lambda$ direction is equivalent to having
$B^*_\gamma \cdot (\mathcal{\overline S}^+_{\pi_e} \backslash \{0\}) \subset
\mathcal{S}^+_{\pi_s}$, which is a necessary condition if 
$\gamma$ is a positive path.  To take care of both the 
$\lambda$ and the $\tau$ direction, we have already 
introduced the notion of strongly positive.

Let $\Pi(\pi) \subset \Pi(\RR)$ be the set of paths 
starting and ending at the same $\pi \in \RR$.
Let $\pi \in \RR$ and
let $\gamma_* \in \Pi(\pi)$ be a strongly positive path. Assume further
that if $\gamma_*=\gamma_s\gamma=\gamma \gamma_e$ then either
$\gamma=\gamma_*$ or $\gamma$
is trivial. We will say that $\gamma_*$ is \emph{neat}.
If for example $\gamma_*$ ends by a left arrow and starts by a sufficiently 
long (at least half the length of $\gamma_*$) sequence 
of right arrows then the last condition of being neat 
is automatically satisfied.

Let $\widehat \De=\widehat \Up^{(1)}_\RR \cap
(\Delta_{\gamma_*} \times \Ga_{\gamma_*})$ and let $\De=\Up^0_\RR \cap
\Delta_{\gamma_*}$.
We will study the first return map $T_{\widehat \De}$ to the section 
$\widehat \De$ under the Veech flow with involution.
Notice that the connected components of its domain are given by
$\widehat \Up^{(1)}_\RR \cap (\Delta_{\gamma\gamma^*} 
\times \Ga_{\gamma_*})$, where
$\gamma$ is either $\gamma_*$, or a minimal path of the form 
$\gamma_* \gamma_0 \gamma_*$ not beginning by $\gamma_*\gamma_*$.  
The restriction of $T_{\widehat \De}$ to each connected component of its
domain has the expression
\be \nonumber
T_{\widehat \De}(\lambda,\pi,\tau)=\left (\frac
{(B_{\gamma}^*)^{-1} \cdot \lambda}
{\|(B_{\gamma}^*)^{-1} \cdot \lambda\|},\pi,
\|(B_{\gamma}^*)^{-1} \cdot \lambda\|
(B_{\gamma}^*)^{-1} \cdot \tau \right ).
\ee
and the return time function is given by
\be \nonumber
r_{\widehat \De}(\lambda,\pi,\tau)=r_\De(\lambda,\pi)=
-\ln \|(B_{\gamma}^*)^{-1} \cdot \lambda\|.
\ee
The map $T_{\widehat \De}(\lambda,\pi,\tau)=(\lambda',\pi,\tau')$
is a skew-product over a
non-invertible transformation $T_\De(\lambda,\pi)=(\lambda',\pi)$.

Analogously to the case of the renormalization map, the 
Veech flow with involution can be seen as a suspension over $T_{\widehat \De}$,
with roof function $r_{\widehat \De}$. But considering 
this suspension model, we lose the control of many more orbits which
do not come back to $\widehat \De$.  Still, due to
ergodicity of the Veech flow with involution, almost every orbit is captured by the
suspension model.

\subsection{Hyperbolic properties} \label {hyperbolic proper}

The reason to choose the section $\widehat \De$ is because the 
transformation $T_{\widehat \De}$ has better hyperbolic properties than 
transformations considering larger sections and we can also describe easily 
the connected components of its domain.

\begin{lemma} \label {skew-product}
$T_{\widehat \De}$ is a hyperbolic skew-product over $T_\De$.
\end{lemma}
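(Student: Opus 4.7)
The plan is to check, one by one, the four conditions of Definition~\ref{skew-product map} for $T_{\widehat\De}$ over $T_\De$. The projection $\pi\colon\widehat\De\to\De$ is the obvious one, $(\lambda,\pi,\tau)\mapsto(\lambda,\pi)$; it is continuous and satisfies $T_\De\circ\pi=\pi\circ T_{\widehat\De}$ directly from the formulas, since the $(\lambda,\pi)$-coordinates of $T_{\widehat\De}$ depend only on $(\lambda,\pi)$ and evolve exactly as under $T_\De$. The invariant measure $\nu$ is the normalized restriction of $\widehat m^{(1)}_\RR$ to $\widehat\De$; finiteness follows from the precompactness of $\widehat\De$, which is built into the construction via the strongly positive, neat path $\gamma_*$: on the $\lambda$-side, $B^*_{\gamma_*}\cdot(\overline{\mathcal S^+_\pi}\setminus\{0\})\subset \mathcal S^+_{\pi}$ makes $\Delta_{\gamma_*}$ precompact in the simplex, and on the $\tau$-side the strongly positive property gives $(B^*_{\gamma_*})^{-1}\cdot\overline{\Ga'_\pi}\setminus\{0\}\subset\Ga_\pi$, so that $\Ga_{\gamma_*}$ has closure inside $\Ga_\pi$. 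Invariance of $\nu$ under $T_{\widehat\De}$ is inherited from the $\widehat m^{(1)}_\RR$-invariance of the full Rauzy renormalization $\widehat R$ by the standard first-return construction.

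For the disintegration, work in one connected component of $\widehat\De$ at a time. Each such component is of the form $\widehat\Up^{(1)}_\RR\cap(\Delta_{\gamma_*}\times\Ga_{\gamma_*})$ and, once the slice $\|\lambda\|=1$ is taken, the measure class is the product of Lebesgue measures on the $\lambda$-factor and on $\Ga_{\gamma_*}$. Let $\nu_{(\lambda,\pi)}$ be the normalized restriction of Lebesgue measure to the fiber $\{(\lambda,\pi)\}\times\Ga_{\gamma_*}$. Since, in these coordinates, the fiber and the fiber measure do not depend on the base point $(\lambda,\pi)$, the fiber average $\bar u(\lambda,\pi)=\int u(\lambda,\pi,\tau)\,d\nu_{(\lambda,\pi)}(\tau)$ is $C^1$ by differentiation under the integral sign, and $\sup\|D\bar u\|\leq C\sup\|Du\|$ with $C$ depending only on the mass of $\nu_{(\lambda,\pi)}$. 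This gives condition~(3).

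The main step, and the main obstacle, is condition~(4), the uniform contraction in the $\tau$-direction. For $y_i=(\lambda,\pi,\tau_i)$, $i=1,2$ in the same fiber, the formula for $T_{\widehat\De}$ yields $T_{\widehat\De}(y_i)=(\lambda',\pi',\tau_i')$ with
\[
\tau_1'-\tau_2'\;=\;\|(B_\gamma^*)^{-1}\lambda\|\,(B_\gamma^*)^{-1}(\tau_1-\tau_2)\;=\;e^{-r}(B_\gamma^*)^{-1}(\tau_1-\tau_2).
\]
The crucial point is that every $\gamma$ labelling a component of the domain of $T_{\widehat\De}$ begins with $\gamma_*$, so contains a strongly positive sub-arrow. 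By Lemma~\ref{strongly positive} and its proof, $(B_{\gamma_*}^*)^{-1}$ maps the closed cone $\overline{\Ga'_{\pi}}\setminus\{0\}$ into the open cone $\Ga_{\pi_s}$ uniformly; in projective terms, applied to $\Ga_{\gamma_*}$ this gives a uniform Birkhoff/Hilbert contraction. The Euclidean contraction factor is then controlled by combining the projective contraction with the scaling factor $e^{-r}=\|(B_\gamma^*)^{-1}\lambda\|$: for $\lambda\in\Delta_{\gamma_*}$ precompact inside the open positive cone, $\|(B_\gamma^*)^{-1}\lambda\|$ is of the order of the inverse Perron factor of $B_\gamma^*$, which dominates the expansion of $(B_\gamma^*)^{-1}$ in any direction transverse to the Perron ray. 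The delicate part is checking that this combination gives a strict contraction $\|\tau_1'-\tau_2'\|\leq\kappa^{-1}\|\tau_1-\tau_2\|$ with $\kappa>1$ \emph{independent} of the choice of returning arrow $\gamma$; this uniformity is exactly what the strongly positive, neat choice of $\gamma_*$ built into the definition of $\widehat\De$ is designed to provide, as in the analogous argument of~\cite{AGY}.
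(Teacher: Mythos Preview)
Your argument has two genuine gaps and one methodological divergence from the paper that leaves your condition~(4) incomplete.

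First, you never verify that the base map $T_\De$ is a \emph{uniformly expanding Markov map} in the sense of Definition~\ref{markovmap}. This is a hypothesis in Definition~\ref{skew-product map}, not a free assumption, and the paper spends half the proof on it: one must specify a Finsler metric on $\De$, check the uniform expansion bound $\kappa\|v\|\le\|DT_\De(x)v\|$ with $\kappa>1$ independent of the branch, and check the bounded distortion condition~(2) on $\log J$. The paper accomplishes this by taking the Hilbert metric on $\Delta_\pi$, writing each inverse branch as $h=h_*\circ h_0$ via the neatness of $\gamma_*$, and invoking the standard fact that a positive linear map sending a cone into a precompact subcone is a uniform Hilbert-metric contraction. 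The distortion bound is then obtained from Lemma~\ref{Lip}. None of this appears in your proposal.

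Second, your treatment of condition~(3) asserts that ``the fiber and the fiber measure do not depend on the base point $(\lambda,\pi)$''. This is false: $\widehat\De$ sits inside the area-one level set $\langle\lambda,-\Omega(\pi)\tau\rangle=1$, so the fiber over $(\lambda,\pi)$ is the affine slice $\{\tau\in\Ga_{\gamma_*}:\langle\lambda,-\Omega(\pi)\tau\rangle=1\}$, which moves with $\lambda$. The paper handles this by passing to the trivialization $\widehat\De\to\De\times\P\Ga_{\gamma_*}$ (projectivizing $\tau$), in which $\widehat\nu$ has a smooth density with respect to a genuine product measure, and condition~(3) follows from the Leibniz rule.

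Third, and most importantly, your argument for condition~(4) tries to establish a uniform Euclidean contraction $\|\tau_1'-\tau_2'\|\le\kappa^{-1}\|\tau_1-\tau_2\|$ by balancing the possible expansion of $(B_\gamma^*)^{-1}$ against the scaling factor $e^{-r}$. You acknowledge this is ``delicate'' and appeal to Perron-factor heuristics, but do not actually prove it; in particular, $(B_\gamma^*)^{-1}$ can expand vectors in $\Ga_\pi$, and there is no a priori reason the factor $\|(B_\gamma^*)^{-1}\lambda\|$ beats that expansion uniformly in $\gamma$. The paper sidesteps this entirely by choosing the \emph{Hilbert metric} on $\Ga_\pi$ as the fiber part of the Finsler structure. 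In that metric the map $\tau\mapsto(B_\gamma^*)^{-1}\tau$ (up to rescaling, which is invisible to the Hilbert metric) is always a weak contraction, and since every branch $\gamma$ ends in $\gamma_*$ with $(B_{\gamma_*}^*)^{-1}\cdot\Ga'_\pi\subset\Ga_\pi$, the image of each fiber lies in a subcone of bounded Hilbert diameter, giving a uniform contraction factor $\delta<1$ with no further computation. This is the same mechanism that gives uniform expansion for $T_\De$; your Euclidean route would require substantial additional work that you have not supplied.
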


Recalling the definition \ref{skew-product map}, we observe 
that associated to a hyperbolic skew-product we have: 
a probability measure $\widehat \nu$ (which we chose as the normalized 
restriction of $\widehat m^{(1)}_\RR$ to $\widehat \De$) and a Finsler metric 
$\| \cdot \|_{\widehat \De}$ (which we will choose in order 
to obtain the hyperbolic properties we want from $T_{\widehat \De}$).
At first we will introduce a complete Finsler metric on 
$\widehat \Up^{(1)}_\pi$, and then we will consider its restriction
denoting it by $\| \cdot \|_{\widehat \De}$. Since $\gamma_*$
is strongly positive, the section $\widehat \De$ is a 
precompact open subset of $\widehat \Up^{(1)}_\RR$, 
therefore $\widehat \De$ will have bounded diameter with respect 
to such metric.

\subsection{Hilbert metric}

Now, we will introduce the Hilbert projective metric and 
state some of its pro\-perties which we will use after. This notion 
can be defined for a general convex cone $C$ in any vector space, 
but in our case we only need $C=\R^\BA_+$.

We call the Hilbert pseudo-metric on $\R^2_+$ the function $\dist_{\R^2_+}$ 
defined by 
$$\dist_{\R^2_+}(x,y) =
\ln \max_{1 \leq i,j \leq 2} \frac {x_i y_j} {x_j y_i},
\text{ for each }  
x,y \in \R^2_+.$$ Given a linear operator $B \in \GL(2,\R)$ such 
that $B \cdot \R^2_+ \subset \R^2_+$, we have $\dist_{\R^2_+}
(B \cdot x,B \cdot y) \leq \dist_{\R^2_+}(x,y)$ for all 
$x,y \in \R^2_+$, or equivalently, such that all coefficients 
of the matrix $B$ are non-negative, which means that $B$ contracts 
weakly the Hilbert pseudo-metric. In particular, the Hilbert 
pseudo-metric is invariant under linear isomorphisms of $\R^2_+$.

In general, if we consider an open convex cone 
$C \subset \R^\BA \backslash \{0\}$ whose closure does 
not contain any one-dimensional subspace of $\R^\BA$, 
we define the Hilbert pseudo-metric on $C$ by $\dist_C(x,y)=0$ if 
$x$ and $y$ are collinear and $\dist_C(x,y)=\dist_{\R^2_+}(\psi(x),\psi(y))$ 
where $\psi$ is any isomorphism between the intersection of $\R^2_+$ 
and the subspace generated by $x$ and $y$ (this isomorphism exist 
since $x$ and $y$ are not collinear).
If $C=\R^\BA_+$ then we define $\dist_C(x,y)=\max_{\balpha,\bbeta \in \BA} 
\ln \frac {x_\balpha y_\bbeta}{x_\bbeta y_\balpha}$.

Given two convex cones $C$ and $C'$ such that $C' \subset C$ then
$\dist_C(x,y) \leq \dist_{C'}(x,y)$, i.e., the inclusion map 
$C' \to C$ is a weak contraction of the respective 
Hilbert pseudo-metrics. But if the diameter of $C'$ 
with respect to $\dist_C$ is bounded by some $M$ then 
we have an uniform contraction by some constant $\delta=\delta(M)<1$, 
i.e., $\dist_C(x,y) \leq \delta \dist_{C'}(x,y)$.

It is clear that $\dist_C(x,y)=0$ if and only if there exists 
$t>0$ such that $y=tx$. If we restrict the Hilbert pseudo-metric 
on a convex cone $C$ to the space of rays $\{t x\tq t \in \R_+\} 
\subset C$ we have the Hilbert metric, which is a complete Finsler metric.

\subsection{Uniform expansion and contraction}

Recall that $\widehat \Up^{(1)}_\pi$ is contained in
$\Delta_\pi \times \Ga_\pi$, which is a product of two convex cones.  In
$\Delta_\pi \times \Ga_\pi$, we have the product Hilbert pseudo-metric
$$\dist((\lambda,\pi,\tau),
(\lambda',\pi,\tau'))=\dist_{\Delta_\pi}((\lambda,\pi),(\lambda',\pi))+
\dist_{\Ga_\pi}(\tau,\tau').$$
Each product of rays $\{(a \lambda,\pi,b \tau) \tq a,b \in \R_+\} \subset
\Delta_\pi \times \Ga_\pi$ intersects transversely $\widehat \Up^{(1)}_\RR$ in a
unique point.  It follows that the product Hilbert pseudo-metric induces a
metric $\dist$ on $\widehat \Up^{(1)}_\pi$.  It is a complete Finsler
metric.
%Let $\SS_\pi^{1}=\{ \lambda \in \SS_\pi \tq \|\lambda\|=1\}$.

\begin{lemma}\label{Lip}
Given $\pi \in \RR$, define 
$\Delta_\pi^{1}=\{ (\lambda, \pi) \in \Delta_\pi \tq \|\lambda\|=1\}$.
Let $g^\pi: \Delta^1_\pi \to \Delta^1_\pi$ be a functional defined 
by $g^\pi(\lambda, \pi)=\left(\sum_{\bbeta}g^\pi_\bbeta \lambda_\bbeta, \pi\right)$, 
where $g^\pi_\bbeta \geq 0$ for all $\bbeta \in \BA$. Then 
$\log g^\pi(\lambda, \pi)$ is $1$-Lipschitz relative to the 
Hilbert metric.
\end{lemma}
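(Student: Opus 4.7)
The plan is to reduce the statement to the classical fact that a nonnegative linear functional is a weak contraction of the Hilbert metric when composed with $\log$, and then transfer to $\Delta_\pi$ via the inclusion $\Delta_\pi \subset \R_+^\BA$. Since this inclusion is a weak contraction of Hilbert pseudo-metrics (as noted in the paper immediately after the definition of $\dist_C$), it suffices to verify that $\log g^\pi$ is $1$-Lipschitz with respect to $\dist_{\R_+^\BA}$, and then the desired Lipschitz estimate for $\dist_{\Delta_\pi}$ follows.

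The first step is to write $g^\pi(\lambda)/g^\pi(\lambda')$ as a convex combination:
$$\frac{g^\pi(\lambda)}{g^\pi(\lambda')} \;=\; \sum_{\bbeta \in \BA} \frac{g^\pi_\bbeta\,\lambda'_\bbeta}{g^\pi(\lambda')}\cdot\frac{\lambda_\bbeta}{\lambda'_\bbeta},$$
where the weights $g^\pi_\bbeta \lambda'_\bbeta / g^\pi(\lambda')$ are nonnegative and sum to $1$ (indices $\bbeta$ with $g^\pi_\bbeta = 0$ drop out harmlessly). Consequently,
$$\min_\bbeta \frac{\lambda_\bbeta}{\lambda'_\bbeta} \;\leq\; \frac{g^\pi(\lambda)}{g^\pi(\lambda')} \;\leq\; \max_\bbeta \frac{\lambda_\bbeta}{\lambda'_\bbeta}.$$

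The second step uses the normalization $\lambda,\lambda' \in \Delta^1_\pi$, which implies $\sum_\bbeta \lambda_\bbeta = \sum_\bbeta \lambda'_\bbeta$. This forces the family of ratios $r_\bbeta = \lambda_\bbeta/\lambda'_\bbeta$ to contain both values $\leq 1$ and values $\geq 1$; hence $\min_\bbeta \log r_\bbeta \leq 0 \leq \max_\bbeta \log r_\bbeta$. Combining this sign information with the convex-combination bound,
$$\bigl|\log g^\pi(\lambda) - \log g^\pi(\lambda')\bigr| \;\leq\; \max_\bbeta \log r_\bbeta - \min_\bbeta \log r_\bbeta \;=\; \max_{\balpha,\bbeta}\log\frac{\lambda_\balpha\lambda'_\bbeta}{\lambda_\bbeta\lambda'_\balpha} \;=\; \dist_{\R_+^\BA}(\lambda,\lambda').$$
The third step is the transfer: by the general monotonicity of Hilbert pseudo-metrics under inclusion of cones, $\dist_{\R_+^\BA}(\lambda,\lambda') \leq \dist_{\Delta_\pi}(\lambda,\lambda')$, completing the proof.

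The argument is essentially routine; the only subtlety worth flagging is the use of the normalization in the second step. The function $\log g^\pi$ is not scaling-invariant (it picks up an additive $\log t$ under $\lambda \mapsto t\lambda$), whereas the Hilbert metric is scaling-invariant, so a naive $1$-Lipschitz statement fails on the whole cone. Restricting to $\Delta^1_\pi$ pins down the sign structure of the $\log r_\bbeta$'s and turns the one-sided convex-combination bound into a two-sided estimate by exactly $\dist_{\R_+^\BA}$.
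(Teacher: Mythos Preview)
Your proof is correct and follows essentially the same approach as the paper: bound the ratio $g^\pi(\lambda)/g^\pi(\lambda')$ by the supremum of the coordinate ratios $\lambda_\bbeta/\lambda'_\bbeta$ via a convex-combination argument, and then compare with the Hilbert metric. The paper compresses the argument into a single displayed line and jumps directly to $\dist_{\Delta_\pi}$, leaving implicit both the role of the normalization $\|\lambda\|=\|\lambda'\|=1$ and the passage through $\dist_{\R_+^\BA}$; your version makes these two points explicit, which is a genuine improvement in clarity (and your closing remark about scaling non-invariance correctly identifies why the normalization is not merely cosmetic).
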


\begin{proof}
Given $(\lambda, \pi), (\lambda', \pi) \in \Delta^1_\pi$, we have
\be \nonumber
\frac{g^\pi(\lambda, \pi)}{g^\pi(\lambda', \pi)}=
\frac{\left(\sum_{\bbeta}g^\pi_\bbeta \lambda_\bbeta, \pi \right)}
{\left(\sum_{\bbeta}g^\pi_\bbeta \lambda'_\bbeta, \pi \right)} \leq 
\sup_\bbeta \frac{\left(\lambda_\bbeta, \pi \right)}{\left(\lambda'_\bbeta, \pi \right)} 
\leq e^{\dist_{\Delta_\pi}((\lambda,\pi),(\lambda',\pi))}.
\ee
Thus $\log g^\pi(\lambda, \pi)$ is $1$-Lipschitz with respect to $\dist_{\Delta_\pi}$.
\end{proof}

\smallskip

\noindent{\it {Proof of Lemma \ref {skew-product}}.}
Let us first show that $T_\De$ is a uniformly expanding Markov map (the
underlying Finsler metric being the restriction of
$\dist_{\Delta_\pi}$, and the underlying measure $\Leb$ being the
induced Lebesgue measure). It is clear that $\De$ is a John domain.

Condition (1) of Definition \ref {markovmap}
is easily verified, except for the definite contraction of inverse branches. 
To check this property, we notice that an inverse branch can be written as
$h(\lambda,\pi)=\left (\frac {B_\gamma^* \cdot \lambda} {\|B_\gamma^*
\cdot \lambda\|},\pi \right )$.  Since $\gamma_*$ is neat, we can write
$B^*_\gamma=B^*_{\gamma_*} B^*_{\gamma_0}$ for some $\gamma_0$.  Thus $h$
can be written as (the restriction of) the composition of two maps
$\Delta^1_\pi \to \Delta^1_\pi$, $h=h_* \circ h_0$, where
$h_0$ is weakly contrac\-ting and $h_*$ is uniformly
contracting by precompactness of $\De$ in $\Delta_\pi$
(which is a consequence of strong positivity of $\gamma_*$).

To check condition (2) of Definition \ref {markovmap},
let $h(\lambda,\pi)$ be an inverse branch of $T_\De$.

Let $V=\{ v \in \SS_\pi \tq \sum v_\balpha=0\}$ be the hyperplane tangent to 
$\Delta^1_\pi$ at a point $(\lambda, \pi) \in \Delta^1_\pi$. 
Since the coordinate $\pi$ is fixed by $h$ we can dismiss it.
Thus the simplified expression of $h$ is $h(\lambda)=\frac {B_\gamma^* \cdot \lambda} 
{\|B_\gamma^*\cdot \lambda\|}$. We will denote 
$\phi(B_\gamma^* \cdot \lambda)=\|B_\gamma^*\cdot \lambda\|= 
\sum_{\balpha, \bbeta} (B_\gamma^*)_{\balpha, \bbeta} \lambda_\bbeta$, where 
$(B_\gamma^*)_{\balpha, \bbeta}$ is the coefficient of $B_\gamma^*$ 
in the line $\balpha$ and the column $\bbeta$.
So,
\be \nonumber
Dh(\lambda)\cdot v = \frac{B_\gamma^*\cdot v}{\phi(B_\gamma^* \cdot \lambda)}
-\frac{B_\gamma^*\cdot \lambda}{\phi(B_\gamma^* \cdot \lambda)}
\frac{\sum_{\balpha} \left(B_\gamma^*\cdot v \right)_\balpha}{\phi(B_\gamma^* \cdot \lambda)}.
\ee
So $Dh(\lambda)=P_\lambda \circ \phi(B_\gamma^* \cdot \lambda)^{-1} \circ B_\gamma^*$, where 
$B_\gamma^*: V \to B_\gamma^* \cdot V$, $\phi(B_\gamma^* \cdot \lambda)^{-1}$ is the division 
by the scalar $\phi(B_\gamma^* \cdot \lambda)$ on $B_\gamma^* \cdot V$ and 
$P_\lambda: B_\gamma^* \cdot V \to V$ is the projection on $V$ 
along the direction $B_\gamma^* \cdot \lambda$.
The Jacobian of $h$ at $(\lambda,\pi)$ is $J \circ h(\lambda)= \det Dh(\lambda)$, 
so,
\be \nonumber
\ln J \circ h= \ln \det P_\lambda-(d-2) \ln \det \phi(B_\gamma^* \cdot \lambda) 
+\ln \det B_\gamma^*
\ee
We want to prove that $\ln J \circ h$ is Lipschitz relative to 
the Hilbert metric. We have that $\ln \det B_\gamma^*$ is constant 
and, by lemma \ref{Lip}, $\ln \det \phi(B_\gamma^* \cdot \lambda)$ is 
$1$-Lipschitz. Now we have to verify what happens with 
$\ln \det P_\lambda$. We have that
\be \nonumber
\det P_\lambda = \frac{\langle n_1, B_\gamma^* \cdot \lambda \rangle}
{\langle n_0, B_\gamma^* \cdot \lambda \rangle}
\ee
where $n_0$ and $n_1$ are unit vectors in $\SS_\pi$ orthogonal to the 
hyperplanes $V$ and $B_\gamma^* \cdot V$. Indeed, the vector $n_0$ 
and the vector $B_\gamma \cdot n_1$ are 
collinear with the orthogonal projection of $(1,\ldots,1)$ on $\SS_\pi$. 
%and 
%the vector $n_1$ is collinear with the orthogonal projection of 
%$B_\gamma^{-1}(1,\ldots,1)$ on $\SS_\pi$. 
Note that $n_0$ has non-negative coefficients, so, 
neither $B_\gamma \cdot n_0$ and $B_\gamma \cdot n_1$ have.
Once again by lemma \ref{Lip}, we have 
that each $\log (\langle n_i, B_\gamma^* \cdot \lambda \rangle)$ is 
$1$-Lipschitz.
Therefore, $\ln J \circ h$ is $d$-Lipschitz with respect to
$\dist_{\Delta_\pi}$. 
%$J \circ h(\lambda,\pi)=\left (\frac{1} {\|B_\gamma^* \cdot \lambda\|} \right )^{d-1}$, 
%where $2d=\# \AA$. It follows that
%\be
%\frac {J \circ h(\lambda,\pi)} {J \circ h(\lambda',\pi)} \leq
%\sup_{\balpha \in \BA} \left (\frac {\lambda'_\balpha}
%{\lambda_\balpha} \right )^{d-1} \leq e^{(d-1)
%\dist_{\Delta_\pi}((\lambda,\pi),(\lambda',\pi))},
%\ee
%so that $\ln J \circ h$ is $(d-1)$-Lipschitz with respect to
%$\dist_{\Delta_\pi}$.

To see that $T_{\widehat \De}$ is a hyperbolic skew-product over $T_\De$,
one checks the conditions (1-4) of Definition \ref {skew-product map}.
Condition (1) is
obvious, and condition (4) follows from precompactness of $\widehat \De$ in
$\Delta_\pi \times \Ga_\pi$ as before.
Since $T_{\widehat \De}$ is a first return map,
the restriction of $\widehat m^{(1)}_\RR$
to $\widehat \De$ is $T_{\widehat \De}$-invariant.  Its normalization is the
probability measure $\widehat \nu$ of condition (2).  In order to check condition
(3), it is convenient to trivialize $\widehat \De$
to a product (via the natural diffeomorphism
$\widehat \De \to \De \times \P\Ga_{\gamma_*}$, where $\P V$ denotes 
the projective space of $V$). Since
$\widehat \nu$ has a smooth density with respect to the product of the Lebesgue
measure on the factors, condition (3) follows by the Leibniz rule.
\qed

\smallskip

Our results give the finiteness of the measure and the integrability 
of the cocycle.

\begin{prop}\label{finitemeasure}
The space $\Om^{(1)}_\RR$ has finite volume. 
\end{prop}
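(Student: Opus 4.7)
The plan is to realize $\Om^{(1)}_\RR$ as a suspension over the precompact section $\widehat \De$ with roof function $r_{\widehat \De}$, and then to invoke Proposition~\ref{6.2} to show the roof is integrable.

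First I would establish that almost every orbit of the Veech flow in $\Om^{(1)}_\RR$ meets $\widehat \De$. Applying Proposition~\ref{6.2} with $\hat\gamma = \gamma_*$, for any fixed $q \in \R^{\BA}_+$ the $P_q$-probability that a Rauzy path does not contain $\gamma_*$ as a sub-path and has matrix norm exceeding $T$ is at most $C T^{-\delta}$. Since $\|B_\gamma\|$ tends to infinity along almost every orbit of $\widehat R$, letting $T \to \infty$ shows that the set of paths never exhibiting $\gamma_*$ has zero $P_q$-measure for every $q$, which translates to zero $\widehat m^{(1)}_\RR$-measure after disintegrating along $\tau$-fibres (for each fixed $\tau$ the area-one slice measure coincides, up to a bounded Jacobian, with $\nu_{\pi,h}$ for $h=-\Omega(\pi)\cdot\tau$). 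Off this null set, every point of $\Om^{(1)}_\RR$ lies on a unique flow segment issuing from $\widehat \De$ of length less than $r_{\widehat \De}$, so by Fubini
\be \nonumber
\widehat m^{(1)}_\RR(\Om^{(1)}_\RR) = \int_{\widehat \De} r_{\widehat \De}\, d\widehat m^{(1)}_\RR.
\ee

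Next I would verify that the right-hand side is finite. Strong positivity of $\gamma_*$ (Lemma~\ref{strongly positive}) makes $\widehat \De$ a precompact open subset of $\widehat \Up^{(1)}_\RR$ in both the $\lambda$- and $\tau$-directions, so $\widehat m^{(1)}_\RR(\widehat \De) < \infty$. For integrability of the roof, recall that the connected components of the domain of $T_{\widehat \De}$ are labelled by paths $\gamma$ of the form $\gamma_*$ or $\gamma_*\gamma_0\gamma_*$, and on each such component $r_{\widehat \De}=-\ln\|(B^*_\gamma)^{-1}\cdot\lambda\|$ is comparable to $\ln\|B_\gamma\|$. A second application of Proposition~\ref{6.2} with $\hat\gamma = \gamma_*$ (this time bounding the measure of the union of components on which $\gamma$ is long) yields
\be \nonumber
\widehat m^{(1)}_\RR\bigl(\widehat \De \cap \{r_{\widehat \De} > s\}\bigr) \leq C e^{-\delta s},
\ee
whence $\int_{\widehat \De} r_{\widehat \De}\, d\widehat m^{(1)}_\RR < \infty$.

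The main obstacle is the translation from the $P_q$-estimates of Section~\ref{distortion} --- stated for a single reference vector $q$ --- to estimates on the flow-invariant measure $\widehat m^{(1)}_\RR|_{\widehat \De}$. I would handle this by disintegrating over $\tau$: for each fixed $\tau$ the conditional measure on the unit-area slice is a multiple of $\nu_{\pi,h}$ with $h=-\Omega(\pi)\cdot\tau$, the proportionality factor being bounded above and below on the precompact set $\widehat \De$. Since Proposition~\ref{6.2} is uniform in $q$ and the $\tau$-cross-section of $\widehat \De$ is precompact, integrating the exponential-tail bound in $\tau$ produces the required global estimate.
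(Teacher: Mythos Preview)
Your overall strategy --- view $\Om^{(1)}_\RR$ as the suspension over $\widehat\De$ with roof $r_{\widehat\De}$, then bound the tail of the roof via Proposition~\ref{6.2} --- is exactly the paper's approach. Two points deserve comment.

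First, your choice $\hat\gamma=\gamma_*$ in the second application of Proposition~\ref{6.2} is vacuous: every path $\gamma$ labelling a connected component of the domain of $T_{\widehat\De}$ is either $\gamma_*$ itself or of the form $\gamma_*\gamma_0\gamma_*$, so every such $\gamma$ trivially factors as $\gamma_s\gamma_*\gamma_e$ and the proposition gives no information. The paper takes $\hat\gamma=\gamma_*\gamma_*\gamma_*\gamma_*$; neatness of $\gamma_*$ forces distinct occurrences of $\gamma_*$ in $\gamma$ to be non-overlapping, and minimality of $\gamma$ (together with the condition that $\gamma$ does not begin with $\gamma_*\gamma_*$) then rules out four consecutive copies. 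This is a small but genuine fix you need.

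Second, your disintegration over $\tau$ with varying $q=h(\tau)$ is correct but more work than necessary. The paper simply observes that, because $\widehat\De$ is precompact in the $\tau$-direction, the projection of $\widehat m^{(1)}_\RR|_{\widehat\De}$ to the $\lambda$-simplex has bounded density with respect to Lebesgue measure, and hence is controlled by $\nu_{\pi,q_0}$ for the single fixed vector $q_0=(1,\ldots,1)$. One application of Proposition~\ref{6.2} with this $q_0$ then suffices. Your approach would work, but the paper's shortcut avoids the need to track uniformity in $q$ explicitly.
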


\begin{proof}
Consider the section $\widehat \De$. Notice that 
this section has positive measure and almost every orbit return 
to $\widehat \De$. We have already mention the probability 
measure $\widehat \nu$ on $\widehat \De$, 
which is the normalized restriction of $\widehat m^{(1)}_\RR$ to 
$\widehat \De$.

We want to compute $\int_{\widehat{\De}} 
\log \|(B_\gamma^*)^{-1} \cdot x\|\, d\widehat\nu$.

A connected component of the domain of $T_{\widehat \De}$ 
which intersects the set $\{ x \in \widehat \De: 
r_{\widehat \De}(x) > T \}$ is of the form 
$(\Delta_{\gamma_1} \times \Ga_{\gamma_2}) \cap \widehat \Up^{(1)}_\RR$ 
where $\gamma_1$ can not be written as 
$\gamma_s \widehat \gamma \gamma_e$ with 
$\widehat \gamma = \gamma_*\gamma_*\gamma_*\gamma_*$, 
$\M(B_\gamma \cdot q_0) \geq D^{-1}T$, where $q_0=(1,\ldots,1)$ and 
$D=D(\gamma_*)$ is some constant.

The projection of $\widehat \nu|_{(\Delta_{\gamma_1} \times 
\Ga_{\gamma_2})\cap \widehat \Up^{(1)}_\RR}$ on $\Up^{(1)}_\RR$ is 
absolutely continuous with a bounded density, so
\begin{multline*}
\widehat \nu \{ x \in (\Delta_{\gamma_1} \times \Ga_{\gamma_2}) 
\cap \widehat \Up^{(1)}_\RR: r_{\widehat \De}(x)>T \} \leq
\\
C P_{q_0}(\gamma \text{ can not be written as } 
\gamma_s \widehat \gamma \gamma_e \text{ and } 
\M(B_\gamma \cdot q_0) \geq D^{-1} T \di \pi)
\end{multline*}
and the result follows by Proposition \ref{6.2}.
\end{proof}

\subsection{Properties of the roof function}

Recall $H(\pi)=\Omega(\pi) \cdot \mathcal{S}_\pi$. As we have observed, 
given a path $\gamma \in \Pi(\pi)$, $H(\pi)$ is invariant under 
the map $B_\gamma$. By 
Lemma \ref{hpositive}, if $\tau \in \Ga_\pi$ then 
$-\Omega(\pi) \cdot \tau \in \R^\BA_+$ and by Corollary \ref{non-empty}, 
$\Ga_\pi$ is a non-empty set, so $H(\pi) \cap \R^\BA_+ \neq
\emptyset$.

\begin{lemma}\label{Hpi}
Let $\pi$ be an irreducible permutation and $\gamma \in \Pi(\RR)$. 
The subspace $H(\pi)$ has dimension greater than one.
\end{lemma}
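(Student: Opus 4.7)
The plan is to combine (i) the invariance of $\dim H(\pi)$ along a Rauzy class, (ii) skew-symmetry of $\Omega(\pi)$ to cut down to a boundary case, and (iii) a direct construction at a convenient representative $\pi$ to kill the boundary case.

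First I would iterate Lemma \ref{translation} to show that, for any path $\gamma$ from $\pi$ to $\pi'$, the invertible matrix $B_\gamma$ gives a linear isomorphism $H(\pi)\to H(\pi')$ (this was already observed after Lemma \ref{translation}). Hence $\dim H(\pi)$ depends only on the Rauzy class of $\pi$, and we may replace $\pi$ by any convenient representative.

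Next, the defining formula (\ref{defOmega}) is manifestly skew-symmetric (swapping $x$ and $y$ negates each of the five cases), so $\Omega(\pi)$ is a skew-symmetric operator on $\R^\BA$ and its rank is even. By Corollary \ref{non-empty} the cone $\Ga_\pi$ is non-empty, and by Lemma \ref{hpositive} any $\tau\in\Ga_\pi$ produces a nonzero vector $h=-\Omega(\pi)\tau\in H(\pi)\cap\R_+^\BA$; thus $\dim H(\pi)\ge 1$, which forces $\mathrm{rank}(\Omega(\pi))\ge 2$. Since $\mathcal{S}_\pi\subset\R^\BA$ has codimension one,
\[
\dim H(\pi)=\dim\Omega(\pi)(\mathcal{S}_\pi)\ge \mathrm{rank}(\Omega(\pi))-1,
\]
so the conclusion is immediate as soon as $\mathrm{rank}(\Omega(\pi))\ge 4$.

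The remaining obstacle is the degenerate case $\mathrm{rank}(\Omega(\pi))=2$, in which the conclusion fails precisely when $v_\pi=\sum_{\pi(x)<\pi(*)}e_{\bx}-\sum_{\pi(x)>\pi(*)}e_{\bx}$ lies in $\mathrm{Im}(\Omega(\pi))$. To rule this out I would use step (i) together with the normalization procedure in the proof of Lemma \ref{Theta'}: after appropriate left/right operations, one moves to a representative of the form (\ref{pi1}) (or (\ref{pi2})), which displays two left double letters $\alpha,\beta$ and at least one right double letter. Taking $\tau_1\in\mathcal{S}_\pi$ with $\tau_{1,\balpha}=1=-\tau_{1,\bbeta}$ (the vector used in Lemma \ref{Theta'}) together with a second $\tau_2\in\mathcal{S}_\pi$ supported on a disjoint pair of letters (for instance, on a right double letter pair, or on a simple letter paired with a double letter in such a way that (\ref{leftright}) holds), a direct evaluation of $\Omega(\pi)\tau_1$ and $\Omega(\pi)\tau_2$ via (\ref{defOmega}) shows that these two vectors have essentially disjoint supports inside $\R^\BA$, hence are linearly independent. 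The main obstacle here is the combinatorial case analysis: one must track, for each admissible placement of the remaining letters relative to $\alpha,i(\alpha),\beta,i(\beta)$ and $*$, which of the five cases of (\ref{defOmega}) is active. But irreducibility of $\pi$ together with the rigid positions prescribed by (\ref{pi1})-(\ref{pi2}) reduces this to a finite verification, completing the argument.
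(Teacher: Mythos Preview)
Your skew-symmetry reduction is correct (rank of $\Omega(\pi)$ is even, and $\dim H(\pi)\ge\operatorname{rank}\Omega(\pi)-1$), but notice it buys nothing: if step~7 actually produces $\tau_1,\tau_2\in\mathcal S_\pi$ with $\Omega(\pi)\tau_1,\Omega(\pi)\tau_2$ independent, then $\dim H(\pi)\ge 2$ follows immediately and steps 2--5 are superfluous; conversely, steps 2--5 by themselves do not settle the rank-$2$ case. So the whole proof lives in step~7, and that step has a real gap.

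First, your $\tau_2$ ``supported on a disjoint pair of letters'' need not exist. In the normalized forms (\ref{pi1})--(\ref{pi2}) both $\balpha$ and $\bbeta$ are \emph{left} double classes; if there is a single right double class $\underline\gamma$ and no simple class, then $\mathcal S_\pi=\{\lambda:\lambda_{\balpha}+\lambda_{\bbeta}=\lambda_{\underline\gamma}\}$ and the only vector of $\mathcal S_\pi$ supported off $\{\balpha,\bbeta\}$ is zero. Second, even when some $\tau_2$ with disjoint support exists, disjoint supports of $\tau_1,\tau_2$ say nothing about the supports of $\Omega(\pi)\tau_1,\Omega(\pi)\tau_2$, because $\Omega(\pi)$ is a full matrix; the independence claim requires exactly the row-by-row computation via (\ref{defOmega}) that you explicitly defer to an unperformed ``finite verification''. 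That verification \emph{is} the proof.

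The paper proceeds differently and more directly: it stays with the given $\pi$ (no passage to a Rauzy-equivalent representative), picks the extremal double letters $A,B$ and one further letter $C$ (simple) or $D$ (double), writes out the corresponding $3\times 3$ block of $\Omega(\pi)$ and the resulting $3\times 2$ block after restriction to $\mathcal S_\pi$, and reads off that this block has rank $2$ unless $x_C=y_C$ for every simple $C$ and $z_D=0$ for every double $D$. Those constraints force the rigid shape $A\cdots i(A)\,*\,i(B)\cdots B$, for which both the left and right successor fall outside $\sssigma$, contradicting irreducibility. This is exactly the combinatorial case analysis you gestured at but did not carry out.
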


\begin{proof}
Let $A$ be a minimal double letter in the sense that $A$ is a left double 
letter and there is no double letter $Z$ such that $\pi(Z)<\pi(A)$ or 
$\pi(i(Z))<\pi(A)$. Let $B$ be a maximal double letter in the sense that $B$ 
is a right double letter and there is no double letter $Z$ such that 
$\pi(B)<\pi(Z)$ or $\pi(B)<\pi(i(Z))$.

We have that:
\be \nonumber
\mathcal{S}_\pi=\left\{\lambda \in \R^{\BA}: 
\lambda_{\underline {A}}=\lambda_{\underline{B}} + \sum \epsilon_\balpha \lambda_\balpha
\right\}
\ee
where $\epsilon_\balpha=0$ if $\alpha$ is simple, $\epsilon_\balpha=-1$ 
if $\alpha$ is left double letter and $\epsilon_\balpha=1$ 
if $\alpha$ is right double letter.

{\bf Case I:}
Let $C$ be a simple letter. Denote $x_C=\Omega(\pi)_{AC} \in \{ 0, 1, 2\}$ 
and $y_C=\Omega(\pi)_{CB} \in \{ 0, 1, 2\}$. Thus the matrix $\Omega(\pi)$ 
has a submatrix of the form:
\be\nonumber
\left( \begin{array}{ccc}
0 & 2 & x_C \\
-2 & 0 & -y_C \\
-x_C & y_C & 0 \end{array} \right).
\ee
So, the matrix $\Omega(\pi) \cdot \mathcal{S}_\pi$ has a submatrix of the form:
\be \nonumber
\left( \begin{array}{cc}
2 & x_C \\
-2 & -y_C \\
-x_C + y_C & 0 \end{array} \right).
\ee
In this case the matrix $\Omega(\pi)\cdot \mathcal{S}_\pi$ has 
rank 2, except if $x_C=y_C$.

{\bf Case II:}
Let $D$ be a left double letter (the other case is analogous). 
Denote $z_D=\Omega(\pi)_{AD} \in \{ -2, -1, 0, 1, 2\}$. Notice that 
$\Omega(\pi)_{DB}=2$. In this case, the matrix $\Omega(\pi)$ 
has a submatrix of the form:
\be \nonumber
\left( \begin{array}{ccc}
0 & 2 & z_D \\
-2 & 0 & -2 \\
-z_D & 2 & 0 \end{array} \right),
\ee
therefore, the matrix $\Omega(\pi) \cdot \mathcal{S}_\pi$ has a submatrix of the form:
\be \nonumber
\left( \begin{array}{cc}
2 & 2+z_D \\
-2 & -2 \\
2-z_D & 2 \end{array} \right).
\ee
In this case the matrix $\Omega(\pi)\cdot \mathcal{S}_\pi$ has 
rank 2, except if $z_D=0$.

Suppose that if $C$ is a simple letter of the permutation $\pi$ then 
$x_C=y_C$ and if $D$ is a double letter of the permutation $\pi$ then 
$z_D=0$. Since $\pi$ is irreducible, $x_C=y_C=1$. 
%and there exists one double letter different from $A$ and $B$. We can suppose without 
%loss of generality that such double letter is at the same side of $A$. 
Thus $\pi$ has the form
\be \nonumber
\begin{array}{cccccccccccc} A & \cdot &
 \cdot & \cdot & \cdot & i(A) & * & i(B) & \cdot & \cdot & \cdot & B
\end{array}
\ee
If we apply the right or the left operation we obtain a reducible 
permutation.

So, there exists a simple letter $C$ such that 
$x_C \neq y_C$ or there exists a double letter $D$ such that 
$z_D \neq 0$.
\end{proof}

Recall that $v_\pi=\sum_{\pi(x)<\pi(*)} e_{\bx}-\sum_{\pi(x)>\pi(*)} e_{\bx}$ 
is the orthogonal vector to $\SS_\pi$.

\begin{lemma}\label{dimH}
Let $\pi$ be an irreducible permutation and $\gamma \in \Pi(\RR)$. 
If $v_\pi \in H(\pi)$, then the subspace $H(\pi)$ has dimension greater 
than two.
\end{lemma}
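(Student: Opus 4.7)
The plan is to extend the case-analysis at the end of the proof of Lemma \ref{Hpi}, adjoining $v_\pi$ to the two linearly independent vectors already produced there, and verifying that the resulting triple remains linearly independent by restricting to a three-dimensional coordinate subspace.

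First I would recall that the argument in Lemma \ref{Hpi} shows that, since $\pi$ is irreducible, denoting by $A$ a leftmost left double letter and by $B$ a rightmost right double letter, at least one of the following holds:
\begin{itemize}
\item[(I)] there exists a simple letter $C$ with $x_C := \Omega(\pi)_{AC} \neq y_C := \Omega(\pi)_{CB}$; or
\item[(II)] there exists a double letter $D$ with $z_D \neq 0$, where $z_D := \Omega(\pi)_{AD}$ if $D$ is left double and $z_D := \Omega(\pi)_{DB}$ if $D$ is right double.
\end{itemize}

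In case (I) I take $w_1 := \Omega(\pi)(e_{\underline{A}} + e_{\underline{B}})$ and $w_2 := \Omega(\pi)\, e_{\underline{C}}$; both clearly lie in $H(\pi)$ since $e_{\underline{A}} + e_{\underline{B}}, e_{\underline{C}} \in \mathcal{S}_\pi$. The restrictions of $w_1, w_2$ to the coordinates $(\underline{A}, \underline{B}, \underline{C})$ were computed in Lemma \ref{Hpi} to be $(2, -2, y_C - x_C)$ and $(x_C, -y_C, 0)$. Since $A$ is left double, $B$ right double, and $C$ simple, the projection of $v_\pi$ is $(2, -2, 0)$. A short $3 \times 3$ determinant computation yields $2(y_C - x_C)^2$, which is nonzero by hypothesis, so $w_1, w_2, v_\pi$ are linearly independent in $H(\pi)$.

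In case (II) I handle first the subcase $D$ left double, the right-double subcase being entirely symmetric with $w_2$ replaced by $\Omega(\pi)(e_{\underline{A}} + e_{\underline{D}})$ and $v_\pi|_{\underline{D}} = -2$ in place of $+2$. I take $w_1 := \Omega(\pi)(e_{\underline{A}} + e_{\underline{B}})$ and $w_2 := \Omega(\pi)(e_{\underline{B}} + e_{\underline{D}})$. The vector $e_{\underline{B}} + e_{\underline{D}}$ satisfies the defining equation of $\mathcal{S}_\pi$ because $B$ (right double) and $D$ (left double) contribute equally ($=2$) to the two sides of $\sum_{\pi(\alpha) < \pi(*)} \lambda_{\underline{\alpha}} = \sum_{\pi(\alpha) > \pi(*)} \lambda_{\underline{\alpha}}$. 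The projections on $(\underline{A}, \underline{B}, \underline{D})$ of $w_1, w_2, v_\pi$ are $(2, -2, 2 - z_D)$, $(2 + z_D, -2, 2)$, and $(2, -2, 2)$, whose $3 \times 3$ determinant simplifies to $2 z_D^2$ — nonzero by hypothesis.

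The only delicate point, and the one I expect to require the most attention, is the bookkeeping of signs and the verification that the auxiliary combinations ($e_{\underline{A}} + e_{\underline{B}}$, $e_{\underline{B}} + e_{\underline{D}}$, or $e_{\underline{A}} + e_{\underline{D}}$) lie in $\mathcal{S}_\pi$; this must be done separately according to the left/right-double type of each letter, but is immediate from the definitions. Beyond that, no new machinery is needed: the whole argument is a direct extension of the algebraic computation of Lemma \ref{Hpi}, with $v_\pi$ playing the role of the third independent vector, and the key algebraic input $x_C \neq y_C$ or $z_D \neq 0$ is precisely the non-degeneracy extracted there.
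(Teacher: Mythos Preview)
Your argument is correct but follows a different route from the paper. The paper first uses the $B_\gamma$-invariance of both $H(\pi)$ and $v_\pi$ (Lemma \ref{translation}) to pass to a permutation in the Rauzy class whose leftmost letter $A$ is \emph{simple}; then $e_{\underline A}\in\mathcal S_\pi$, and depending on whether the rightmost letter $B$ is simple or double one exhibits directly that $\Omega(\pi)e_{\underline A}$, $\Omega(\pi)e_{\underline B}$ (or $\Omega(\pi)(e_{\underline B}+e_{\underline C})$ for a left double $C$), and $v_\pi$ are linearly independent, the key point being that $(v_\pi)_{\underline A}=0$ for $A$ simple. By contrast you stay with the original $\pi$ and bootstrap the dichotomy $(x_C\neq y_C)$ versus $(z_D\neq 0)$ already extracted in Lemma~\ref{Hpi}, adjoining $v_\pi$ to the pair of vectors found there and checking a $3\times 3$ determinant. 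Your approach has the advantage of being self-contained (it does not invoke the Rauzy-invariance of $H(\pi)$), at the price of the two determinant computations $2(y_C-x_C)^2$ and $2z_D^2$; the paper's reduction makes the linear independence almost immediate once the leftmost letter is simple. Both arguments use the hypothesis $v_\pi\in H(\pi)$ in the same way, namely only to place the third independent vector inside $H(\pi)$.
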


\begin{proof}
Let $A$ and $B$ be the leftmost and the rightmost letters of $\pi$.

Since $H(\pi)$ and $v_\pi$ are invariant under $B_\gamma$, 
we can suppose that $A$ is simple.

If $B$ is simple, then $\pi$ has the form:
\be \nonumber
\begin{array}{cccccccccccc} A & \cdot &
 \cdot & i(B) & \cdot & \cdot & * & \cdot & i(A) & \cdot & \cdot & B
\end{array}
\ee
We have that $\Omega(\pi) \cdot e_A$, $\Omega(\pi) \cdot e_B$ and $v_\pi$ are 
linearly independent, since $(v_\pi)_A=(v_\pi)_B=0$.

If $\pi(i(B))<\pi(i(A))$ and $B$ is double, we take a left double letter $C$, 
i.e., $\pi$ has the form:
\be \nonumber
\begin{array}{ccccccccccccccc} A & \cdot & 
C & \cdot & \cdot & i(C) & \cdot & * & \cdot & i(B) & \cdot & i(A) & \cdot & \cdot & B
\end{array}
\ee
And in the case that $\pi(i(B))>\pi(i(A))$, $\pi$ has the form:
\be \nonumber
\begin{array}{ccccccccccccccc} A & \cdot &
C & \cdot & \cdot & i(C)& \cdot & * & \cdot & \cdot & i(A) &\cdot & i(B) & \cdot & B
\end{array}
\ee
In these last two cases, we have $\Omega(\pi) \cdot e_A$, 
$\Omega(\pi) \cdot (e_B+e_C)$ and $v_\pi$ are 
linearly independent, since $(v_\pi)_A=0$ and $(v_\pi)_B=-(v_\pi)_C=1$.
\end{proof}

\begin{lemma} \label {bla}
The roof function $r_\De$
is good (in the sense of Definition \ref {def:goodroof}).
\end{lemma}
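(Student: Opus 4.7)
The plan is to verify the three conditions of Definition~\ref{def:goodroof} in turn, namely a uniform lower bound, bounded $C^0$-norm of the derivative of $r_\De\circ h$ along inverse branches, and non-cohomology to a piecewise constant function.

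For condition (1), I use that every connected component of the domain of $T_\De$ is labelled by a path $\gamma$ containing $\gamma_*$ as a subpath (either $\gamma=\gamma_*$ itself or $\gamma=\gamma_*\gamma_0\gamma_*$). Writing $B_\gamma^*=B_{\gamma_0'}^*B_{\gamma_*}^*$ for some tail, the strong positivity of $\gamma_*$ (Lemma~\ref{strongly positive}) together with the fact that $\De$ is precompact in $\Delta_\pi$ implies that $(B_{\gamma_*}^*)^{-1}$ contracts norms of vectors coming from the section by a uniform factor $c<1$. Since the remaining factor $(B_{\gamma_0'}^*)^{-1}$ is also a weak contraction (matrices $B_\gamma^*$ have non-negative integer entries with $1$'s on the diagonal), we conclude $r_\De(\lambda,\pi) = -\ln\|(B_\gamma^*)^{-1}\cdot\lambda\| \geq -\ln c > 0$.

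For condition (2), I compute $r_\De\circ h$ explicitly. An inverse branch of $T_\De$ has the form $h(\lambda,\pi) = (B_\gamma^*\lambda/\|B_\gamma^*\lambda\|,\pi)$, so since points of $\De$ satisfy $\|\lambda\|=1$, a direct substitution yields $(r_\De\circ h)(\lambda,\pi) = \ln\|B_\gamma^*\cdot\lambda\|$. Since $\|B_\gamma^*\cdot\lambda\| = \sum_{\balpha,\bbeta}(B_\gamma^*)_{\balpha\bbeta}\lambda_\bbeta$ is a linear functional on $\Delta_\pi^1$ with non-negative coefficients, Lemma~\ref{Lip} shows its logarithm is $1$-Lipschitz in the Hilbert metric. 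Precompactness of $\De$ in $\Delta_\pi$ implies the Hilbert metric is uniformly equivalent to the induced Finsler metric on $\De$, so $\|D(r_\De\circ h)\|_{C^0}$ is bounded by a constant independent of the branch $h$.

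For condition (3), I argue by contradiction. Suppose $r_\De = \psi + \phi\circ T_\De - \phi$ with $\psi$ constant on each $\Delta_\gamma$ and $\phi\in C^1$. For any closed combinatorial loop $\gamma^{tot}=\gamma^{(1)}\cdots\gamma^{(n)}$ at the pivot permutation $\pi_*$ (a concatenation of partition paths), the unique periodic point of $T_\De$ with that itinerary is the normalized Perron eigenvector of $(B_{\gamma^{tot}}^*)^{-1}$ acting on $\mathcal{S}_{\pi_*}$, and the telescoping of the cohomology equation yields
\[
-\ln\mu(\gamma^{tot}) \;=\; \sum_{i=1}^n \psi(\gamma^{(i)}),
\]
where $\mu(\gamma^{tot})$ is the corresponding Perron eigenvalue. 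Applying this for $n=1$ determines each $\psi(\gamma^{(i)})$ (when a fixed point exists in $\Delta_{\gamma^{(i)}}$, which can be arranged by preliminary concatenation); applying it for $n=2$ then forces the multiplicativity $\mu(\gamma^{(1)}\gamma^{(2)}) = \mu(\gamma^{(1)})\,\mu(\gamma^{(2)})$ for all pairs. This multiplicativity would require all partition matrices $B_{\gamma}^*|_{\mathcal{S}_{\pi_*}}$ to share a common dominant eigenvector. However, Lemmas~\ref{Hpi} and \ref{dimH} guarantee that the $B_\gamma$-invariant subspace $H(\pi_*)=\Omega(\pi_*)\cdot\mathcal{S}_{\pi_*}$ has dimension at least $2$ (and at least $3$ when $v_{\pi_*}\in H(\pi_*)$), which provides enough room to exhibit two partition paths whose matrices have distinct Perron directions, contradicting multiplicativity.

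The main obstacle is the final step: producing two explicit partition paths whose matrices have genuinely different Perron directions in $\mathcal{S}_{\pi_*}$. The strategy is to use the enumerated cases in the proof of Lemma~\ref{Hpi} (distinguishing simple-letter and double-letter contributions to $\Omega(\pi)$) to construct two primitive closed loops whose linear actions differ in the invariant subspace $H(\pi_*)$, and then pre- and post-concatenate with sufficiently many copies of $\gamma_*$ so that the resulting paths fit the prescribed form of partition elements of $T_\De$ while preserving the distinct spectral data.
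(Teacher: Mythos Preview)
Your arguments for conditions (1) and (2) are correct, though for (1) the paper is more direct: since every path $\gamma_h$ labelling an inverse branch is positive (all entries of $B_{\gamma_h}$ are positive integers), one has $\|B_{\gamma_h}^*\lambda\|\ge d\ge 2$ whenever $\|\lambda\|=1$, whence $r_\De\ge\ln 2$ without any compactness argument.

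For condition (3) your route differs from the paper's and carries a real gap. Evaluating the coboundary relation along periodic orbits does give $\ln\mu(\gamma^{(i_1)}\cdots\gamma^{(i_n)})=\sum_j\psi(\gamma^{(i_j)})$ and hence multiplicativity of the Perron eigenvalues over all words in the alphabet $\Gamma$. But your next assertion, that this forces a common dominant eigenvector for all the $B_\gamma^*|_{\SS_\pi}$, is stated without proof and is not a standard fact; establishing it would require a limit argument of its own. Moreover, your plan to then exhibit two paths with distinct Perron directions via the case analysis of Lemma~\ref{Hpi} is only a sketch: that lemma produces linearly independent vectors in $H(\pi)$, not Perron eigenvectors of specific cocycle matrices $B_\gamma$.

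The paper avoids both difficulties by exploiting the $C^1$ hypothesis on $\phi$ directly rather than only its values at periodic points. Differentiating the iterated identity $r^{(n)}\circ h^n=(\text{const})+\phi-\phi\circ h^n$ and letting $n\to\infty$ (using $Dh^n\to 0$ and Perron--Frobenius) yields $\langle v,w_h\rangle=D\phi(\lambda,\pi)\cdot v$ for every inverse branch $h$, where $w_h$ is the suitably normalized Perron eigenvector of $B_{\gamma_h}$. Since the right-hand side does not depend on $h$, the $\SS_\pi$-component of $w_h$ is a \emph{fixed} vector $w_0$, so all $w_h$ lie in the $2$-plane $W=\R w_0\oplus\R v_\pi$. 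As each $w_h\in H(\pi)$, Lemmas~\ref{Hpi} and~\ref{dimH} supply some $\tau\in\Ga_\pi$ with $\Omega(\pi)\tau\notin W$; recurrence of $\widehat Q^{-1}$ then yields paths $\gamma_n$ whose Perron directions accumulate on $\R_-\Omega(\pi)\tau$, a contradiction. The derivative step is precisely what pins down the Perron \emph{vectors}; your periodic-point approach only controls the Perron \emph{values}, and that is where your argument remains incomplete.
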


\begin{proof}
Let $\Gamma \subset \Pi(\RR)$ be the set of all $\gamma$ such that 
$\gamma$ is either $\gamma_*$, or a minimal path of the 
form $\gamma_* \gamma_0 \gamma_*$ not beginning by $\gamma_*\gamma_*$. 
Notice that $\Gamma$ consists of positive paths. 

The set $\HH$ of inverse branches $h$ of $T_\De$ is in bijection
with $\Gamma$, since each inverse branch is of the form
$h(\lambda,\pi)=\left( \frac{B_{\gamma_h}^* \cdot 
\lambda}{\|B_{\gamma_h}^* \cdot \lambda\|},\pi \right)$ for some
$\gamma_h \in \Gamma$.

Let $h \in \HH$. Then $r_\De(h(\lambda,\pi))=
\ln \|B_{\gamma_h}^* \cdot \lambda\|$.  Since $\gamma_h$ is positive,
$r_\De \geq \ln 2$, which implies condition (1). 
%Notice that
%$r_\De \circ h=\frac {1} {d-1} \ln J \circ h$, where $J$ is as in the
%condition (2) of Definition \ref {markovmap}, so (2) follows
%(by the previous discussion, it even follows that $r_\De \circ h$ is
%$1$-Lipschitz with respect to $\dist_{\Delta_\pi}$).
By lemma \ref{Lip}, $r_\De(h(\lambda,\pi))$ is $1$-Lipschitz 
with respect to $\dist_{\Delta_\pi}$, so (2) follows.

Let us check condition (3).  We identify the tangent space to $\De$ at a
point $(\lambda,\pi) \in \De$ with $V=\{\lambda \in \mathcal{S}_\pi \tq \sum
\lambda_\balpha=0\}$.  Assume that we can write $r_\De=
\psi+\phi \circ T_\De-\phi$ with $\phi$ $C^1$ and $\psi$ locally constant. 
Write $r^{(n)}(\lambda,\pi)=\sum_{j=0}^{n-1} r_\De(T_\De^j(\lambda,\pi))$.
Then $D(r^{(n)} \circ h^n)=D \phi-D(\phi \circ
h^n)$, which can be rewritten as
\be\label{vwn}
\frac {\|(B^*_{\gamma_h})^n \cdot v\|} {\|(B^*_{\gamma_h})^n \cdot
\lambda\|}=D\phi(\lambda,\pi) \cdot v-D(\phi \circ h^n)(\lambda,\pi)
\cdot v, \quad (\lambda,\pi) \in \De,\, v \in V.
\ee
If we define
\be \nonumber
w_{n,h}=\frac {B^n_{\gamma_h} \cdot (1,\ldots,1)}
{\langle \lambda,B^n_{\gamma_h} \cdot (1,\ldots,1) \rangle},
\ee
we replace (\ref{vwn}) by
\be \nonumber 
\langle v, w_{n,h} \rangle=
D\phi(\lambda,\pi) \cdot v-D(\phi \circ h^n)(\lambda,\pi) \cdot v, \quad
(\lambda,\pi) \in \De,\, v \in V.
\ee
We have that $\langle \lambda, w_{n,h} \rangle=1$ for all 
$\lambda \in \SS_\pi$ and $w_{n,h}$ are vectors with all coordinates positive.
By the Perron-Frobenius Theorem $w_{n,h}$ 
converges to some $w_h$ collinear with the unique positive
eigenvector of $B_{\gamma_h}$ (which also corresponds to the largest
eigenvalue). And $w_h=w_{0,h} + t_h v_\pi$, where 
$w_{0,h} \in \SS_\pi \backslash \{ 0 \}$ is the orthogonal projection of 
$w_h$ in $\SS_\pi$ and $v_\pi$ is the 
orthogonal vector of $\SS_\pi$, which is 
invariant under $B_{\gamma_h}$ for all $\gamma_h$. 

Since $Dh^n \to 0$, we conclude that
\be \nonumber
\langle v, w_{h} \rangle =
D\phi(\lambda,\pi) \cdot v, \quad (\lambda,\pi) \in \De,\, v \in V.
\ee
Since $\langle \lambda, w_{0,h} \rangle=1$, we have $w_{0,h}=w_0$.
Thus $w_h=w_0 + t_h v_\pi$ where $w_0 \in \SS_\pi \backslash \{ 0\}$.

Recalling that $H(\pi)$ is invariant under $B_{\gamma_h}$, 
and intersects $\R^\BA_+$, it follows that 
$w_h \in H(\pi)$ and $\R_+ w_{n,h}$ 
%$[B^n_{\gamma_h} \cdot (1,\ldots,1)]$ 
is converging to $\R_- \left(\Omega (\pi)\cdot \tau\right)$.

Let $W=\R w_0 \oplus \R v_\pi$. We have that $W \cap H(\pi) \neq \emptyset$ 
is closed and invariant by $B_{\gamma_h}$. By the previous two lemmas,
there exists $\tau \in \Ga_{\pi}$ such that 
$\Omega(\pi)\cdot \tau \notin W$. 
But, given such $\tau$, we can construct paths $\gamma_n$ such that 
$B_{\gamma_n}\cdot \Omega(\pi) \cdot \Ga_\pi$  converges to 
$\R_- \left(\Omega (\pi) \cdot \tau\right)$ as follows.
We have already observed that $\widehat Q^{-1}$ is recurrent, so 
%we consider the first return map $T_{\widehat \De}$ to construct 
%the desired paths starting and ending at $\pi$. 
given $(\lambda, \pi, \tau) \in \widehat \De$, we apply $\widehat Q^{-1}$ 
until obtain $(\lambda^{(-1)}, \pi, \tau^{(-1)}) \in \widehat \De$. 
We denote by $\gamma_1$ the path obtained previously, 
starting at $(\lambda^{(-1)}, \pi, \tau^{(-1)})$ and ending at 
$(\lambda, \pi, \tau)$. We follow the same procedure to obtain 
$\gamma_n$ starting at $(\lambda^{(-n)}, \pi, \tau^{(-n)})$ and ending at 
$(\lambda, \pi, \tau)$.

By definition of $\widehat \De$, we have that such paths $\gamma_n$ 
are strongly positive, so the image of $B_{\gamma_n}^* \cdot \Ga_{\gamma_n}$ 
is contracted, relatively to the Hilbert metric. Thus we have that 
$B_{\gamma_n}\cdot \Omega(\pi) \cdot \Ga_\pi$ 
is converging to $\R_- \left(\Omega(\pi) \cdot \tau\right)$.
Thus we have a contradiction.
\end{proof}

\begin{thm} \label {exponentialtails}
The roof function $r_\De$ has exponential tails.
\end{thm}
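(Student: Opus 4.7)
The plan is to upgrade the polynomial recurrence bound of Proposition \ref{6.2} to exponential decay of $\Leb\{r_\De > T\}$ by exploiting the fact that $r_\De$ is logarithmic in the cocycle norm $\|B_\gamma\|$. Once we establish $\Leb\{r_\De > T\} \leq C e^{-\delta T}$ for some $\delta>0$, the layer-cake formula gives $\int_\De e^{\sigma_0 r_\De}\, d\Leb < \infty$ for any $0<\sigma_0<\delta$, which is the desired exponential tails estimate.

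First I would translate the event $\{r_\De > T\}$ into a condition on the label $\gamma$ of the containing component, where $\gamma$ ranges over the family $\Gamma$ of inverse-branch paths introduced in the proof of Lemma \ref{bla} (each $\gamma \in \Gamma$ is either $\gamma_*$ itself or a minimal first-return path of the form $\gamma_*\gamma_0\gamma_*$ not beginning by $\gamma_*\gamma_*$). On such a component, writing $\lambda = B_\gamma^*\cdot \lambda'$ with $\lambda' \in \SS_{\pi_e}^+$ and using $\|\lambda\|=1$ together with $q_0:=(1,\ldots,1)$, we compute
\be \nonumber
1 \ =\ \langle q_0,\, B_\gamma^*\cdot \lambda'\rangle \ =\ \langle B_\gamma\cdot q_0,\ \lambda'\rangle \ \leq\ \M(B_\gamma\cdot q_0)\cdot \|\lambda'\|\ =\ \M(B_\gamma\cdot q_0)\cdot e^{-r_\De(\lambda,\pi)}.
\ee
Hence on $\{r_\De > T\} \cap \Delta_\gamma$ we must have $\M(B_\gamma\cdot q_0) > e^T$. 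At the same time, the minimality of the first return forces every $\gamma \in \Gamma$ to contain $\gamma_*$ only at its two endpoints, so $\gamma$ admits no decomposition of the form $\gamma_s\hat\gamma\gamma_e$ with $\hat\gamma := \gamma_*\gamma_*\gamma_*\gamma_*$.

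These two observations together let me apply Proposition \ref{6.2} with this $\hat\gamma$ and with threshold $e^T$ in place of $T$. Since the components $\Delta_\gamma$ ($\gamma \in \Gamma$) are disjoint, since $\De$ is precompact in $\Delta_\pi^1$, and since $\Leb$ on $\De$ is comparable (up to the radial Jacobian) to $\nu_{\pi,q_0}$ restricted to the corresponding cones, summing over the relevant components yields
\be \nonumber
\Leb\{r_\De > T\}\ \leq\ C\, P_{q_0}\bigl(\gamma \text{ is not of the form } \gamma_s\hat\gamma\gamma_e \text{ and } \M(B_\gamma\cdot q_0) > e^T \,\big|\, \pi\bigr)\ \leq\ C' e^{-\delta T}
\ee
for some $\delta>0$ independent of $T$. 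The theorem then follows from
\be \nonumber
\int_\De e^{\sigma_0 r_\De}\, d\Leb\ =\ \Leb(\De)\, +\, \sigma_0 \int_0^\infty e^{\sigma_0 T}\,\Leb\{r_\De > T\}\, dT\ <\ \infty,
\ee
valid for any $\sigma_0 \in (0,\delta)$.

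All of the substantive analytic work is packaged into Proposition \ref{6.2}; the step from that polynomial estimate to exponential tails is essentially formal and only uses that $r_\De$ is the logarithm of $\|B_\gamma\|$. The two small points that must be verified are the deterministic first-return exclusion of $\hat\gamma$ (immediate from the way $\Gamma$ was built around the neat path $\gamma_*$) and the comparability of $\Leb$ on $\De$ with a bounded multiple of $\nu_{\pi,q_0}$ on the cones over $\De$, both of which are already implicit in the proof of Proposition \ref{finitemeasure}, so no genuine obstacle remains.
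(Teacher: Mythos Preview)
Your proof is correct and follows essentially the same route as the paper's own argument: translate $r_\De>T$ into $\M(B_\gamma\cdot q_0)>e^T$ on each component, observe that first-return paths cannot contain $\hat\gamma=\gamma_*\gamma_*\gamma_*\gamma_*$, and invoke Proposition~\ref{6.2}. Your write-up is in fact cleaner than the paper's at the point where the exponential enters (the paper writes $\M(B_\gamma\cdot q_0)\geq C^{-1}T$ and bounds the tail by $CT^{-\delta}$, which are evident typos for $C^{-1}e^T$ and $Ce^{-\delta T}$); your layer-cake step makes explicit why exponential decay of the tail yields $\int e^{\sigma_0 r_\De}\,d\Leb<\infty$.
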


\begin{proof}
Let $\pi$ be the start of $\gamma_*$.  
The push-forward under radial projection
of the Lebesgue measure on $\e_{\pi,q_0}$ onto $\Delta_\pi \cap \Up^{(1)}_\RR$
yields a smooth measure $\tilde \nu$.  It is enough to show that $\tilde
\nu\{x \in \De \tq r_\De(x) \geq T\} \leq C T^{-\delta}$, for some $C>0$,
$\delta>0$.
A connected component of the domain of $T_\De$ that intersects the set
$\{x \in \De \tq r_\De(x) \geq T\}$ is of the form
$\Delta_\gamma \cap \Up^{(1)}_\RR$
where $\gamma$ can not be written as $\gamma_s \hat \gamma \gamma_e$ with
$\hat \gamma=\gamma_* \gamma_* \gamma_* \gamma_*$ and $\M(B_\gamma \cdot
q_0) \geq C^{-1} T$, where $q_0=(1,\ldots,1)$ and
$C$ is a constant depending on $\gamma_*$.  Thus
\be \nonumber
\tilde \nu \{x \in \De \tq r_\De(x) \geq T\} 
\leq P_{q_0}(\gamma \text { can
not be written as } \gamma_s \hat \gamma \gamma_e \text { and }
\M(B_\gamma \cdot q_0) \geq C^{-1} T \di \pi).
\ee
The result follows from Proposition \ref {6.2}.
\end{proof}

Using both the map $T_{\widehat \De}$ and the roof function
$r_\De$ we will define a flow $\widehat T_t$ on the space $\wde=
\{(x,y,s)\,:\, (x,y) \in \widehat \De,\,T_{\widehat\De}(x,y)\text{ is defined
and } 0 \leq s <r_\De(x)\}$.
Since $T_{\widehat \De}$ is
a hyperbolic skew-product (Lemma \ref {skew-product}),
and $r_\De$ is a good roof
function (Lemma \ref {bla}) with exponential tails (Theorem \ref
{exponentialtails}), $\widehat
T_t$ is an excellent hyperbolic semi-flow.  By Theorem \ref
{main_thm_hyperbolic}, we get
exponential decay of correlations
\be \nonumber
C_t(\tilde f,\tilde g)=\int \tilde f \cdot \tilde g \circ \widehat T_t
\dd\nu-\int \tilde f \dd\nu \int
\tilde g \dd\nu,
\ee
for $C^1$ functions
$\tilde f$, $\tilde g$, that is
\be \label {tilde f}
|C_t(\tilde f,\tilde g)| \leq C e^{-3 \delta t} \|\tilde f\|_{C^1}
\|\tilde g\|_{C^1},
\ee
for some $C>0$, $\delta>0$.

\section{The Teichm\"{u}ller flow}
\label{surfaces}

\subsection{Half-translation surfaces}

Let $S$ be a compact oriented surface of genus $g \geq 0$, let 
$\Sigma$ be a finite non-empty subset of $S$,
which we call the \emph{singular set}. Let 
$l=\{l_x\}_{x \in \Sigma}$ (the \emph{multiplicity vector})
be such that $l_x \in \{-1\} \cup \N$ and 
$\sum l_x = 4g-4$. We say that $l_x$ is the multiplicity of the 
singular point $x$.
Consider a maximal atlas $\bA=\{(\UU_\lambda,\phi_\lambda:\UU_\lambda \to
\VV_\lambda \subset \C)\}$ of orientation preserving charts
on $S\backslash\Sigma$ such that for all $\lambda_1, \lambda_2$ with 
$\UU_{\lambda_1} \cap \UU_{\lambda_2} \neq \emptyset$ 
we have $\phi_{\lambda_1}\phi_{\lambda_2}^{-1}(z)=\pm z + constant$, 
i.e., coordinate changes are compositions of rotations by 
$180^{\circ}$ and translations.  We call these coordinates the 
\emph{regular charts}. We also assume that each 
singular point $x$ has an open neighborhood $\UU$ which is
isomorphic to the $\frac{l_x+2}{2}$-folded cover of an open neighborhood
$\VV \subset \C$ of $0$, that is, there exists a homeomorphism, called
a \emph{singular chart},
$\phi:\UU \to \VV$ such that any branch of
$z \mapsto \phi(z)^{(l_x+2)/2}$ is a regular chart.
Under these conditions, we say that the atlas $\bA$ defines a
\emph{half-translation structure} on $(S,\Sigma)$ with multiplicity vector $l$, and
we call $S$ a \emph{half-translation surface}.

Since the change of coordinates preserves families of parallel 
lines in the plane, we have a well-defined singular foliation $\FF_\theta$
of $S$, for each direction $\theta \in \P\R^2$ (the projective 
space of $\R^2$).
In particular, we have well-defined
vertical and horizontal directions.
Notice that we can pullback the Euclidean metric in $\R^2$ by the
regular charts to define a flat metric on $S \backslash \Sigma$.  This flat
metric does not extend smoothly to $\Sigma$ except at points with $l_x=0$. 
The other points of $\Sigma$ are genuine \emph{conical singularities} with
total angle $\pi(l_x+2)$, and are thus responsible for any curvature.  The
corresponding volume form on  $S \backslash \Sigma$ has finite total mass.

Notice that
from each $x \in \Sigma$, there are $l_x+2$ \emph{horizontal separatrices}
alternating with $l_x+2$ \emph{vertical separatrices} emanating from $x$.
A half-translation surface together with the choice of some $x_0 \in \Sigma$
and of one of the horizontal separatrices $X$ emanating from $x_0$ is called a
\emph{marked half-translation surface}.

\subsection{Translation surfaces}

If there exists a compatible atlas
such that the coordinate changes are just translations, then any maximal
such atlas is said to define a \emph{translation structure} on $(S,\Sigma)$
compatible with the half-translation structure, and we call $S$ a 
\emph{translation surface}.
A half-translation surface has thus either $0$ or $2$ compatible
translation structures. Locally, each half-translation 
structure is compatible with a translation structure, but in general 
it is not true globally.
Given a half-translation surface $S$, we can associate a number
$\varepsilon$ where $\varepsilon=1$ or $\varepsilon=-1$ according to whether
the half-translation structure is, or is not, compatible with a translation
structure (on the other hand, obviously each translation structure is
compatible with a unique half-translation structure).
Notice that if $\varepsilon=1$ then $l_x \in 2 \N$ for every $x
\in \Sigma$ (and thus necessarily $g \geq 1$),
but the converse is not generally true.

Given a translation surface $S$, each oriented direction
$\theta \in \mathbb{S}^1$ determines a singular oriented foliation 
$\FF_\theta$ on $S$.  From every singularity thus emanate
$(l_x+2)/2$ eastbound (respectively, northbound, westbound, southbound)
oriented separatrices.
A translation surface together with the choice of some $x_0 \in \Sigma$ and of
a eastbound separatrix $X$ emanating from $x_0$ is called a \emph{marked translation
surface}.

\subsection{Translation surfaces with involution}\label{tsinv}

Let $\tilde S$ be a compact oriented surface of genus $g \geq 1$, let
$\tilde \Sigma$ be a finite non-empty subset of $\tilde S$, and let $I:\tilde S \to
\tilde S$ be an involution preserving $\tilde \Sigma$ and whose fixed points
are contained in $\tilde \Sigma$.
A \emph{translation structure with involution}
on $(\tilde S,\tilde \Sigma,I)$ is a translation structure such that for
every regular chart $\phi$ of the translation structure,
$-\phi \circ I$ is also a regular chart.

Notice that given $(\tilde S,\tilde \Sigma,I)$ we can consider the canonical
projection $p:\tilde S \to S=\tilde S/I$.  Denote $\Sigma=\tilde
\Sigma/I$. We see that any translation structure with involution on
$(S,\Sigma,I)$ induces by $p$ a
half-translation structure on $(S,\Sigma)$, with $\varepsilon=-1$ 
if $\tilde S$ is connected.

Conversely, given $(S,\Sigma)$ and a multiplicity vector $l$ such that there
exists a half-translation structure on $(S,\Sigma)$ with such multiplicity
vector and $\varepsilon=-1$, there exists a ramified double covering
$p:(\tilde S,\tilde \Sigma) \to (S,\Sigma)$ which is unramified in $\tilde S
\backslash \tilde \Sigma$. Indeed, given such a
half-translation structure, we can define $\tilde S \backslash \tilde \Sigma$
to be the set of pairs $(z,\alpha)$ where $z \in S \backslash \Sigma$
and $\alpha$ is an orientation of the horizontal direction through $z$ (the
assumption that $\varepsilon=-1$ guarantees that $\tilde S$ is connected). 
It is then easy to define the missing set $\tilde
\Sigma$ necessary to compactify: each $x \in \Sigma$ with odd $l_x$ giving
rise to a single point of $\tilde \Sigma$ with multiplicity $2l_x+2$ 
and each $x \in \Sigma$ with even $l_x$ giving rise to a pair 
points of $\tilde \Sigma$ with multiplicity $l_x$ each one.
To each half-translation surface we can associate a combinatorial data
$\tilde l$, which is the multiplicity vector considered up to labelling.
The construction above gives rise to a translation surface $\tilde S$ 
with singularity set $\tilde \Sigma$ and there is a natural involution 
defined, interchanging points $(z, \alpha)$ with fixed $z \in \tilde S$.

A translation surface with involution together with the choice of 
some $\tilde x_0 \in \tilde \Sigma$ and of one of the horizontal separatrices 
$\tilde X$ emanating from $\tilde x_0$ to east is called a \emph{marked 
translation surface with involution}. We say that $\tilde x_0$ is the start 
point of $\tilde X$. It is obvious that fixing $\tilde x_0$ 
and $\tilde X$ we also fix $I(\tilde x_0)$ and $I(\tilde X)$.

Notice that when we do the double covering construction above 
we can do it in such a way that $p(\tilde X)=X$ and $p(\tilde x_0)=x_0$, 
i.e., the marked separatrix and its start point are 
preserved.

As we will see in section \ref{coordinates}, we can obtain combinatorial 
and length data $(\lambda, \pi, \tau)$ (as in the section 
\ref{Rauzyclasses}) associated to a marked translation surface with involution 
$(\tilde S, \tilde \Sigma, I)$.

\subsection{Moduli spaces}

Let $S$ be a surface with singular set $\Sigma$ and genus $g$. 
To consider the space of surfaces with fixed genus, 
singularity set, multiplicity vector and the marked sepa\-ratrix, 
we can define equivalence relations on those surfaces, obtaining
moduli spaces. Although moduli spaces are not manifolds, 
we can see them as a quotient of a less restricted 
space, which has a complex affine manifold structure, and the 
modular group of $(S,\Sigma)$, i.e., the group of orientation 
preserving diffeomorphisms of $S$ fixing $\Sigma$ modulo those 
isotopic to the identity. Thus, moduli spaces are complex 
affine orbifolds.

\subsubsection{Moduli space of marked translation surfaces}
Given $g \geq 1$, a function $\kappa:\N \to 2\N$ with finite
support and $\sum_{i \geq 0} i \kappa(i)=4g-4$, 
and an integer $j \geq 0$ with $\kappa(j) \geq 0$,
we let $\MH(g,\kappa,j)$ to be the \emph{moduli space of marked 
translation surfaces} $(S,\Sigma,x_0,X)$ with genus
$g$, $\#\{x \in \Sigma:\, l_x=i\}=\kappa(i)$ and $l_{x_0}=j$.  
Thus two surfaces $(S,\Sigma,x_0,X)$ and $(S',\Sigma',x'_0,X')$ are
equivalent if there exists a homeomorphism $\phi:(S,\Sigma,x_0,X) \to
(S',\Sigma',x'_0,X')$ preserving the translation structure, 
the marked point and the given preferred separatrix.

An alternative way to view $\MH(g, \kappa, j)$ is as follows. 
Given a fixed surface $S$, with finite singular set $\Sigma$, a
multiplicity vector $l$ satisfying $\sum l_i =2g-2$, a fixed 
point $x_0 \in \Sigma$ and some horizontal separatrix $X$ starting 
from $x_0$ going to east, consider the space $\TH(S,\Sigma,x_0, X)$ 
of all marked translation surfaces modulo the following equivalence 
relation: two surfaces $(S,\Sigma,x_0, X)$ and $(S',\Sigma',x'_0, X')$ 
are equivalent if there exists a homeomorphism $\phi: (S,\Sigma,x_0, X) 
\to (S',\Sigma',x'_0, X')$ isotopic to the identity relatively to 
$\Sigma$, which preserves the translation structure. The space 
$\MH(g, \kappa, j)$ is recovered in this way by taking the quotient 
by an appropriate modular group, i.e., the group of orientation 
preserving diffeomorphisms of $S$, fixing $\Sigma$ modulo those 
isotopic to the identity. The advantage of seeing the moduli 
space as a quotient like this, is that it inherits a 
structure of complex affine orbifold, since charts in 
$\TH(S, \Sigma, x_0, X)$ are complex affine.
Indeed, given a path 
$\gamma \in C^\circ([0,T],S)$, we can lift it in $\C$. 
Since we have the translation structure, we can do this 
lifting everywhere. Thus, we can obtain a linear map 
$H_1(S,\Sigma;\Z) \to \C$, which we can see as an element 
of the relative cohomology group $H^1(S,\Sigma;\C)$.
This map is a local homeomorphism, thus it is a local 
coordinate chart. So $\TH(S,\Sigma,x_0,X)$ has 
a complex affine manifold structure.

The Lebesgue measure on space $H^1(S,\Sigma;\C)$ 
(normalized so that the integer lattice 
$H^1(S,\Sigma;\Z) \oplus i H^1(S,\Sigma;\Z)$ has covolume one) 
can be pulled back via these local coordinates, and 
we obtain a smooth measure on the space $\TH(S,\Sigma,x_0,X)$. 
In charts, the modular group acts (discretely and properly 
discontinuously) by complex affine maps preserving the 
integer lattice (and hence the Lebesgue measure). This 
exhibits $\MH(g,\kappa,j)$ as a complex affine orbifold, 
with a canonical absolutely continuous measure $\nu_{\MH}$.
Denoting $\MH^{(1)}$ the moduli space of marked translation 
surfaces with area one, we obtain the respective induced measure 
$\nu_{\MH}^{(1)}$.

The moduli spaces $\MH$ are also called strata and they 
can be disconnected. Kontsevich and Zorich (\cite{KZ03}) 
classified these connected components and they proved 
that they are at most three, for each strata.

\subsubsection{Moduli space of marked translation surfaces with involution}
\label{MHI}

Given $g \geq 1$, functions $\tilde\kappa, \eta:\N \to \N$ with finite
support and $\sum_{i \geq 0} i \tilde\kappa(i)=4\tilde g-4$, and 
an integer $\tilde j \geq 0$ with $\tilde\kappa(\tilde j) \geq 0$,
we let $\MHI(\tilde g,\tilde\kappa,\eta,\tilde j)$ to be the 
\emph{moduli space of marked translation surfaces with involution} 
$(\tilde S,\tilde\Sigma,I,\tilde x_0,\tilde X)$ with genus
$\tilde g$, an involution $I: \tilde S \to \tilde S$ preserving 
$\tilde\Sigma$ and whose fixed points are contained in $\tilde\Sigma$, 
$\#\{x \in \tilde\Sigma:\, l_x=i\}=\tilde\kappa(i)$, 
$l_{x_0}=\tilde j$, and $\#\{x \in \tilde\Sigma:\, l_x=2i \; \text{and} \; 
I(x)=x\}=\eta(2i)$.
Thus two surfaces $(\tilde S,\tilde \Sigma,I,\tilde x_0,\tilde X)$ 
and $(\tilde S',\tilde\Sigma',I',\tilde x'_0,\tilde X')$ are
equivalent if there exists a homeomorphism 
$\phi:(\tilde S,\tilde\Sigma,I,\tilde x_0,\tilde X) \to
(\tilde S',\tilde \Sigma',I',\tilde x'_0,\tilde X')$ preserving the 
translation structure and preserving the involution, in the sense 
that $\phi \circ I=I' \circ \phi$. The marked point and the 
chosen separatrix are also preserved.

Analogous to the previous case, we will consider the moduli 
space of marked translation surfaces with involution 
$\MHI(\tilde g, \tilde \kappa, \eta, \tilde j)$ as a 
larger space, which has an affine complex manifold 
structure, quotiented by a modular group.
Consider a fixed translation surface $\tilde S$, 
an associated involution $I:\tilde S \to \tilde S$, 
a finite singular set $\tilde \Sigma$ invariant by 
$I$, with a multiplicity vector $\tilde l$ satisfying 
$\sum l_i=4\tilde g-4$, together with some fixed 
$\tilde x_0 \in \tilde \Sigma$ and one fixed horizontal 
separatrix $\tilde X$ emanating from $\tilde x_0$. Let 
$\THI(\tilde S, \tilde \Sigma, I, \tilde x_0, \tilde X)$ be 
the set of $(\tilde S, \tilde \Sigma, I, \tilde x_0, \tilde X)$ 
modulo homeomorphism $\phi$ isotopic to the identity relatively to 
$\Sigma$, which preserves the translation structure 
with involution, in particular $\phi \circ I = I \circ \phi$. 

Let $I:\tilde{S}\rightarrow \tilde{S}$ be the involution as
defined before. Consider the induced involution 
$I^*:H^1(\tilde{S},\tilde{\Sigma};\C) \rightarrow 
H^1(\tilde{S},\tilde{\Sigma};\C)$ 
on the relative cohomology group. We can decompose 
the cohomo\-logy group into a direct sum 
$H^1(\tilde{S},\tilde{\Sigma};\C)=H_+^1(\tilde{S},\tilde{\Sigma};\C) 
\oplus H_-^1(\tilde{S},\tilde{\Sigma};\C)$, 
where $H_+^1(\tilde{S},\tilde{\Sigma};\C)$ and 
$H_-^1(\tilde{S},\tilde{\Sigma};\C)$ are, respectively, the invariant and 
the anti-invariant subspaces of $I^*$. Observe that, since the 
involution changes the orientation of regular charts, the 
element of $H^1(\tilde{S},\tilde{\Sigma};\C)$ which represents 
$\tilde{S}$ is in $H_-^1(\tilde{S},\tilde{\Sigma};\C)$ and a small neighborhood of it 
gives a local coordinate chart of a neighborhood of $\tilde S$ in
$\THI(\tilde S, \tilde \Sigma, I, \tilde x_0, \tilde X)$.
Notice that we are considering that the translation surface with involution $\tilde S$ 
can have some regular points in $\tilde \Sigma$. But if we consider the set 
$\hat \Sigma \subset \tilde \Sigma$ such that $\hat \Sigma$ has no 
regular points, we have that the canonical homomorphism 
$H_-^1(\tilde{S},\tilde{\Sigma};\C) \to H_-^1(\tilde{S},\hat{\Sigma};\C)$ 
induced by the inclusion $\hat{\Sigma} \hookrightarrow \tilde{\Sigma}$ is 
an isomorphism. So we can choose $\hat{\Sigma}$ or $\tilde{\Sigma}$ 
to define the coordinate charts (see \cite{MZ}). 
Since the modular group acts discretely and properly discontinuously, 
we obtain a complex affine structure of orbifold to 
$\MHI(\tilde g, \tilde \kappa, \eta, \tilde j)$.
%The cohomology group $H^1(\tilde{S},\tilde{\Sigma};\C)$ 
%has an integer lattice $H^1(\tilde{S},\tilde{\Sigma};\Z) \oplus 
%i H^1(\tilde{S},\tilde{\Sigma};\Z)$.
The space $H_-^1(\tilde{S},\tilde{\Sigma};\C)$ 
has a smooth standard measure which we can 
transport to $\THI(\tilde S, \tilde \Sigma, I, \tilde x_0, \tilde X)$ 
obtaining a smooth measure in this space. Hence, the space 
$\MHI$ inherits a smooth measure $\mu_{\MHI}$ and the moduli space 
of surfaces with area one $\MHI^{(1)}$ inherits the induced measure 
$\mu_{\MHI}^{(1)}$.

%There exists a natural map $\MHI(\tilde g,\tilde \kappa, \eta, \tilde j) 
%\to \MH(\tilde g,\tilde \kappa,\tilde j)$, which forgets the 
%identifications of involution.

\subsubsection{Moduli space of marked half-translation surfaces}
Given $g \geq 0$, a function $\kappa:\N \cup \{-1\} \to \N$ with finite
support and $\sum_{i \geq -1} i \kappa(i)=4g-4$,
$\varepsilon \in \{-1,1\}$, and an integer $j \geq -1$ with $\kappa(j)>0$,
we let $\MHQ(g,\kappa,\varepsilon,j)$ to be the
\emph{moduli space of marked half-translation surfaces} $(S,\Sigma,x_0,X)$ with genus
$g$, $\#\{x \in \Sigma:\, l_x=i\}=\kappa(i)$ and $l_{x_0}=j$.  Two surfaces
$(S,\Sigma,x_0,X)$ and $(S',\Sigma',x'_0,X')$ are
equivalent if there exists a homeomorphism $\phi:(S,\Sigma,x_0,X) \to
(S',\Sigma',x'_0,X')$ preserving the half-translation structure, 
the marked point and the fixed separatrix.

If $\varepsilon=1$, the half-translation structure is compatible 
with two translation structures (correspon\-ding to both possible 
orientations) and there exists a natural map 
$\MH(g,\kappa,j) \to \MHQ(g,\kappa,1,j)$ which forgets the 
polarization. This map is a ramified double cover 
of orbifolds. 

Now given a half-translation structure which is not compatible 
with a translation structure, we will associate a translation 
structure using the (ramified) double covering construction.
%We also have a map between $\MHQ$ and $\MH$, as follows.
Define $\tilde \kappa:\N \cup \{-1\} \to \N$
by $\tilde \kappa(2 i-1)=0$, $\tilde \kappa(4i)=2 \kappa(4i)+\kappa(2i-1)$,
$\tilde \kappa(4i+2)=2 \kappa(4i+2)$. 
Let $\tilde g=4+\sum_{i \geq -1} i \tilde \kappa(i)=2g-1+\frac {1} {2}
\sum_{i \geq 0} \kappa(2i-1)$, $\tilde j=j$ if $j$ is even or $\tilde j=2
j+2$ if $j$ is odd.  Thus, we obtain a canonical injective map
$\MHQ(g,\kappa,-1,j) \to \MH(\tilde g,\tilde \kappa,\tilde j)$. 
In fact, by construction, the image of this map is 
$\MHI(\tilde g,\tilde \kappa,\eta,\tilde j)$, 
where the map $\eta: \N \cup \{-1\} \to \N$ is such that 
$\eta(2i)=\#\{x \in \tilde\Sigma:\, l_x=2i \; \text{and} \; I(x)=x\}$ 
and $\eta(2i+1)=0$.

We also can define the quotient map 
$\MHI(\tilde g, \tilde \kappa, \eta, \tilde j) \to 
\MHQ(g, \kappa, -1, j)$, such that, to each structure 
$(\tilde S, \tilde \Sigma, I, \tilde x_0, \tilde X)$ 
associates the quotient structure $(S , \Sigma , x_0 , X ) =
(\tilde S / I, \tilde \Sigma / I, \tilde x_0 / I, \tilde X / I)$.
Notice that this map is well-defined and it is injective, since 
$\tilde S$ is connected. 
%If $\tilde S$ is not connected, 
%we will obtain two structures $(S^1, \Sigma^1, x_0^1, X^1)$ and 
%$(S^2, \Sigma^2, x_0^2, X^2)$ corresponding to both possible 
%orientations and they will belong to $\MHQ(g, \kappa, 1, j)$.
Thus, we have a bijection between marked half-translation 
surfaces which are not translation 
surfaces and connected marked translation surfaces with involution.

As in the case of translation surfaces, the moduli space 
of marked half-translation surfaces is called strata. 
Lanneau classified the connected components of each strata, 
which are at most two (\cite{La04}, \cite{La08}).

\subsection{Teichm\"uller Flow}
The group $\SL(2,\R)$ acts on 
$\MHI$ (or more generally, on the space of marked translation 
surfaces with involution) by postcomposition in the charts. This action 
preserves the hypersurface $\MHI^{(1)}$ and measures 
$\mu_{\MHI}$ on $\MHI$ and $\mu_{\MHI}^{(1)}$ on $\MHI^{(1)}$. 

The \emph{Teichm\"uller Flow} is the particular action of the diagonal subgroup 
$\TF_t:= \left(\begin{array}{cc} e^{t} & 0 \\0& e^{-t}\end{array}\right)$ 
and it is measure-preserving.

\begin{thm}[Masur, Veech]
\label{thm_masur_veech} The Teichm\"{u}ller flow is mixing 
on each connected component of each stratum of the moduli 
space $\MHQ^{(1)}$, with respect to the finite 
equivalent Lebesgue measure, $\mu^{(1)}$.
\end{thm}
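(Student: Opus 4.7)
The plan is to deduce this mixing statement directly from the stronger exponential mixing result (Theorem \ref{1.2}) developed throughout the body of the paper; I read the Masur--Veech attribution as historical context, since the exponential mixing result specializes to give mixing on each ergodic component. Concretely, I would first apply Theorem \ref{main_thm_hyperbolic} to the excellent hyperbolic semi-flow $\widehat T_t$ on $\widehat\Delta_r$ constructed in Section \ref{excelent}: Lemma \ref{skew-product} supplies the hyperbolic skew-product structure over $T_\Delta$, Lemma \ref{bla} shows the roof function $r_\Delta$ is good, and Theorem \ref{exponentialtails} gives exponential tails. All hypotheses being met, correlations $C_t(\tilde f,\tilde g)$ decay exponentially in $t$ for $C^1$ observables on $\widehat\Delta_r$, as summarized in \eqref{tilde f}.

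The next step is to push this decay through the tower of measure-preserving, finite-to-one identifications constructed in Sections \ref{combinatorial} and \ref{surfaces}. The suspension interpretation of Section 6.1 identifies $\widehat T_t$ with the Veech flow with involution $\VF_t$ on $\Om^{(1)}_\RR$ outside a zero-measure set of non-returning orbits. The zippered-rectangle coordinates give an almost-everywhere finite cover of connected components of the stratum $\MHI^{(1)}(\tilde g,\tilde\kappa,\eta,\tilde j)$ by $\Om^{(1)}_\RR$, and the quotient map of Section 7.3.3 realizes each non-orientable component of $\MHQ^{(1)}(g,\kappa,-1,j)$ as the image of a connected component of $\MHI^{(1)}$ under the involution quotient; for $\varepsilon=+1$ one uses the analogous translation-surface construction of \cite{AGY}. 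Because pullback preserves $C^1$ regularity at every stage, exponential correlation decay passes from $\widehat T_t$ down to $\TF_t$ on each connected component, for all $C^1$ test functions.

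Finally, mixing for $L^2$ observables follows from exponential mixing for $C^1$ observables by a standard density argument. Given $\phi,\psi\in L^2(\mu^{(1)})$ with zero mean on a fixed connected component, choose $C^1$ approximants $\phi_n,\psi_n$ converging to $\phi,\psi$ in $L^2$. Since $\TF_t$ preserves $\mu^{(1)}$, Cauchy--Schwarz gives
\[
\left|\int \phi\cdot(\psi\circ\TF_t)\,d\mu^{(1)}-\int \phi_n\cdot(\psi_n\circ\TF_t)\,d\mu^{(1)}\right|\leq \|\phi-\phi_n\|_{L^2}\|\psi\|_{L^2}+\|\phi_n\|_{L^2}\|\psi-\psi_n\|_{L^2},
\]
which can be made arbitrarily small uniformly in $t$; for each fixed $n$ the $C^1$ correlation tends to zero as $t\to\infty$ by the previous step. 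A standard $\varepsilon/3$ argument then yields $\int\phi\cdot(\psi\circ\TF_t)\,d\mu^{(1)}\to 0$, which is the desired mixing statement.

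The main obstacle in this plan is verifying that the combinatorial bookkeeping matches the geometry, that is, that the Rauzy classes with involution of Section \ref{Rauzyclasses} are in natural bijection with the connected components of the non-orientable strata in the Kontsevich--Zorich--Lanneau classification (\cite{KZ03}, \cite{La04}, \cite{La08}). Only with this correspondence in hand can one conclude that the exponential mixing on each $\Om^{(1)}_\RR$ corresponds to mixing on each connected component of $\MHQ^{(1)}$ rather than on some union of components. Everything else is a routine consequence of the machinery already assembled.
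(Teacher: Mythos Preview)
The paper does not prove this theorem: it is stated with the attribution [Masur, Veech] and treated as a known classical result, serving as background for the paper's own stronger exponential mixing theorem. There is no proof in the paper to compare against.

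Your strategy of deducing mixing from exponential mixing is logically sound, and the density argument in your final paragraph is the standard one. One caution about circularity: the paper's construction of the excellent hyperbolic semi-flow invokes \emph{ergodicity} of the Veech flow with involution (Section~\ref{excelent}, ``due to ergodicity of the Veech flow with involution, almost every orbit is captured by the suspension model'') to ensure that almost every orbit returns to the precompact section $\widehat\De$. This ergodicity is a separate result of Veech, mentioned in the introduction and logically weaker than the Masur--Veech mixing statement, so your argument is not circular---but you should make explicit that ergodicity is being taken as input, independently of the theorem you are proving. The bijection between Rauzy classes with involution and connected components that you flag as the main obstacle is handled in the paper by citation to \cite{BL} (see Section~\ref{zr}).
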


The Theorem \ref{t.main}, in the setting of translation 
surfaces was proved by Avila, Gou\"ezel and Yoccoz \cite{AGY}. 
So, we will restrict the proof just to the case of half-translation 
surfaces which are not translation surfaces. Thus, we can prove 
it, just considering marked translation surfaces with involution.
In this setting, the Theorem \ref{t.main} is equivalent to:

\begin{thm}
The Teichm\"uller flow is exponential mixing on each connected 
component of the moduli space $\MHI^{(1)}$ with respect to the measure 
$\mu_{\MHI}^{(1)}$ for observables in the Ratner class.
\end{thm}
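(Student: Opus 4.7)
The plan is to transfer the exponential decay of correlations of Theorem \ref{main_thm_hyperbolic}, applied to the excellent hyperbolic semi-flow $\widehat T_t$ on $\wde$ already constructed in Section \ref{excelent}, back to the Teichm\"uller flow on a connected component of $\MHI^{(1)}$, and then to widen the class of allowed observables from $C^1$ to the Ratner class. The first of these steps is essentially a dictionary between the combinatorial zippered-rectangle model and the geometric moduli space model, while the second is where the equivalence between exponential mixing for the Ratner class and the spectral gap property of the $\SL(2,\R)$ action (alluded to in the introduction and discussed in Appendix B of \cite{AGY}) enters decisively.

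First I would fix a connected component of $\MHI^{(1)}$ and choose a Rauzy class with involution $\RR$ whose associated zippered rectangle construction covers it. Using the double cover and marking construction of Subsections \ref{tsinv}--\ref{MHI}, the Veech flow with involution $\VF_t$ on $\Om_\RR^{(1)}$ is a finite-to-one measurable cover of $\TF_t$ on that component, and the lift of $\mu_{\MHI}^{(1)}$ agrees, up to a multiplicative constant, with the finite measure $m^{(1)}_\RR$ of Proposition \ref{finitemeasure}. By Subsection \ref{veechteichmullerflow} and the definition of $\wde$, the semi-flow $\widehat T_t$ on $\wde$ is measurably conjugate to $\VF_t$ restricted to the full-measure set of orbits that meet the precompact section $\widehat\De$. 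Now Lemma \ref{skew-product}, Lemma \ref{bla}, and Theorem \ref{exponentialtails} verify the three hypotheses of Theorem \ref{main_thm_hyperbolic}, so (\ref{tilde f}) gives exponential decay of correlations for $C^1$ observables on $\wde$; pushing forward through the conjugacy yields the same statement for $\TF_t$-correlations of suitable $C^1$ observables on $\MHI^{(1)}$ supported in the image of the chosen section.

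To globalize this local statement I would vary the neat strongly positive path $\gamma_*$ so that the corresponding sections $\widehat\De$ sweep out an open set of full measure, and use a partition of unity together with ergodicity of $\TF_t$ to pass to exponential decay of correlations for all $C^1$ observables on $\MHI^{(1)}$; the exceptional locus not reached by any zippered rectangle parametrization has codimension at least two by \cite{Ve86} and is harmless. Finally, to extend from $C^1$ observables to the Ratner class $H$, I would invoke the equivalence recalled in Appendix B of \cite{AGY}: exponential mixing for smooth observables implies a spectral gap for the unitary representation of $\SL(2,\R)$ on the orthogonal complement of the constants in $L^2(\mu_{\MHI}^{(1)})$, and the presence of a spectral gap yields effective decay of matrix coefficients for any vector which is H\"older with respect to the $\SO(2,\R)$ action, which is exactly the required statement for the Ratner class.

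The main obstacle, in my view, is the matching of regularities in the second paragraph: one must verify carefully that the Finsler structure underlying the definition of $C^1(\wde)$ relates to a genuine smooth structure on the orbifold $\MHI^{(1)}$ in a way that makes the class of observables witnessed by Theorem \ref{main_thm_hyperbolic} rich enough to produce the spectral gap. Once this matching and the partition-of-unity argument are in place, both the globalization over $\MHI^{(1)}$ and the final representation-theoretic passage to the Ratner class are standard, and Theorem \ref{t.main} (and hence Theorem \ref{main}) follows.
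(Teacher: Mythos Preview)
Your overall architecture is the same as the paper's: build the excellent hyperbolic semi-flow on $\wde$, deduce exponential decay of correlations there, transfer to $\TF_t$ on $\CC^{(1)}$ for a dense class of observables, and then invoke Appendix~B of \cite{AGY} to pass to the Ratner class. The last step is exactly what the paper does, and your identification of the Ratner/spectral-gap equivalence is correct.

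The gap is precisely the one you flag at the end, and your proposed fix for it is not the right one. The difficulty is not one of \emph{coverage}: a single precompact section $\widehat\De$ already meets almost every $\VF_t$-orbit, so the suspension $\wde$ already models $\CC^{(1)}$ up to a null set; varying $\gamma_*$ and taking a partition of unity is unnecessary and does not touch the real issue. The real issue is that for a compactly supported $C^1$ function $f$ on $\CC^{(1)}$, the pullback $f\circ P$ to $\wde$ is \emph{not} in $C^1(\wde)$: the map $P(z,s)=\TF_s(\proj(z))$ has derivative of size $e^{Cs}$ in the time direction, and since the roof function $r_\De$ is unbounded, $\|f\circ P\|_{C^1(\wde)}=\infty$ in general. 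So you cannot simply ``push forward through the conjugacy'' to get a $C^1$ statement on the moduli space side.

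The paper resolves this with a regularization lemma rather than a covering argument. For each $t>0$ one defines $f^{(t)}$ by setting $f^{(t)}=f\circ P$ on those connected components of $\wde$ where the time coordinate stays below $\delta_0 t$, and $f^{(t)}=0$ on the remaining components. Then $\|f^{(t)}\|_{C^1(\wde)}\le Ce^{\delta t}$ because the Lipschitz constant of $P$ is controlled on the kept part, while $\|f\circ P-f^{(t)}\|_{L^2}\le Ce^{-\epsilon_0 t}$ because the discarded part has exponentially small measure by the exponential tails of $r_\De$ (Theorem~\ref{exponentialtails}). Since the decay in (\ref{tilde f}) was arranged to be $e^{-3\delta t}$, the growth of $\|f^{(t)}\|_{C^1}^2$ is beaten by the mixing rate, and one obtains exponential decay of $\int f\cdot(f\circ\TF_t)\,d\nu_{\CC^{(1)}}$ for every compactly supported $C^1$ function $f$ with zero mean. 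This $t$-dependent truncation, balancing $C^1$-norm growth against $L^2$-error decay, is the missing ingredient in your outline; once it is in place, the density of such $f$ in $H$ and Appendix~B of \cite{AGY} finish the proof exactly as you describe.
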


%Our result asserts that the rate of mixing is exponential and, 
%in particular, we obtain another proof of the finiteness of 
%the measure.

\section{From the model to the Teichm\"uller flow}
\label{zr}

\subsection{Zippered rectangles construction}\label{ZR}

Consider an irreducible permutation $\pi \in\RR$ and length 
data $\lambda \in \SS_\pi$, $\tau \in \Ga_\pi$ 
and $h \in \R^\BA_+$ defined by $h=-\Omega(\pi) \cdot \tau$. 
Notice that $h=(h_\balpha)_{\balpha \in \BA}$ is such that 
$h_\balpha >0$ for all $\balpha \in \BA$.
Let $\alpha(l)$ and $\alpha(r)$ be the leftmost and 
the rightmost letters of $\pi$, i.e., 
$\pi(\alpha(l))=1=\overline{\pi}(i(\alpha(r)))$ and 
$\pi(\alpha(r))=2d+1=\overline{\pi}(i(\alpha(l)))$.

Define the sets:
\be \nonumber
\BB_\pi(\alpha)=
\left \{ \begin{array}{lll}
\{ \beta \in \AA : \, \pi(\alpha)<\pi(\beta)<\pi(*)\} & \text{if} 
& 1\leq \pi(\alpha)< \pi(*)
,\\[5pt]
\{ \beta \in \AA : \, \pi(*)<\pi(\beta)<\pi(\alpha)\} & \text{if} 
& \pi(*)< \pi(\alpha)\leq 2d+1
\end{array}
\right.
\ee

\be \nonumber
\BB'_\pi(\alpha)=
\left \{ \begin{array}{lll}
\{ \beta \in \AA : \, \pi(\alpha)\leq\pi(\beta) < \pi(*)\} & \text{if} 
& 1\leq \pi(\alpha)< \pi(*)
,\\[5pt]
\{ \beta \in \AA : \, \pi(*)<\pi(\beta) \leq \pi(\alpha)\} & \text{if} 
& \pi(*)< \pi(\alpha)\leq 2d+1
\end{array}
\right.
\ee

\be \nonumber
\BB_{\overline{\pi}}(\alpha)=
\left \{ \begin{array}{lll}
\{ \beta \in \AA : \, \overline{\pi}(\alpha)<
\overline{\pi}(\beta)<\overline{\pi}(*)\} & \text{if} 
& 1\leq \overline{\pi}(\alpha)< \overline{\pi}(*)
,\\[5pt]
\{ \beta \in \AA : \, \overline{\pi}(*)<
\overline{\pi}(\beta)<\overline{\pi}(\alpha)\} & \text{if} 
& \overline{\pi}(*)< \overline{\pi}(\alpha)\leq 2d+1
\end{array}
\right.
\ee

\be \nonumber
\BB'_{\overline{\pi}}(\alpha)=
\left \{ \begin{array}{lll}
\{ \beta \in \AA : \, \overline{\pi}(\alpha)\leq\overline{\pi}(\beta) 
< \overline{\pi}(*)\} & \text{if} 
& 1\leq \overline{\pi}(\alpha)< \overline{\pi}(*)
,\\[5pt]
\{ \beta \in \AA : \, \overline{\pi}(*)<\pi(\beta) 
\leq \overline{\pi}(\alpha)\} & \text{if} 
& \overline{\pi}(*)< \overline{\pi}(\alpha)\leq 2d+1
\end{array}
\right.
\ee

For each $\alpha \in \AA$ consider the rectangles with 
horizontal sides $\lambda_\alpha$ and vertical sides $h_\alpha$ 
defined by:
\be \nonumber
\begin{aligned}
&R^{t,r}_\alpha=\left( \sum_{\beta \in \BB_\pi(\alpha)}\lambda_\bbeta, 
\sum_{\beta \in \BB'_\pi(\alpha)} \lambda_\bbeta \right) 
\times [0,h_\alpha], 
\\
&R^{t,l}_\alpha=\left( -\sum_{\beta \in \BB'_\pi(\alpha)}\lambda_\bbeta, 
-\sum_{\beta \in \BB_\pi(\alpha)} \lambda_\bbeta \right) 
\times [0,h_\alpha], 
\\
&R^{b,r}_\alpha=\left( \sum_{\beta \in \BB_{\overline{\pi}}(\alpha)}\lambda_\bbeta, 
\sum_{\beta \in \BB'_{\overline{\pi}}(\alpha)} \lambda_\bbeta \right) 
\times [-h_\alpha,0]. 
\\
&R^{b,l}_\alpha=\left( -\sum_{\beta \in \BB'_{\overline{\pi}}(\alpha)}\lambda_\bbeta, 
-\sum_{\beta \in \BB_{\overline{\pi}}(\alpha)} \lambda_\bbeta \right) 
\times [-h_\alpha,0]. 
\end{aligned}
\ee

If $\alpha \notin \{\alpha(l), \alpha(r)\}$, also consider the 
vertical segments:
\be \nonumber
\begin{aligned}
&S^{t,r}_\alpha=\left\{ \sum_{\beta \in \BB'_\pi(\alpha)}\lambda_\bbeta \right\} 
\times \left[ 0, \sum_{\beta \in \BB'_\pi(\alpha)} \tau_\bbeta \right] 
\\
&S^{t,l}_\alpha=\left\{ -\sum_{\beta \in \BB'_\pi(\alpha)}\lambda_\bbeta \right\} 
\times \left[ 0, -\sum_{\beta \in \BB'_\pi(\alpha)} \tau_\bbeta \right] 
\\
&S^{b,r}_\alpha=\left\{ \sum_{\beta \in \BB'_{\overline{\pi}}(\alpha)}
\lambda_\bbeta \right\} 
\times \left[ \sum_{\beta \in \BB'_{\overline{\pi}}(\alpha)} \tau_\bbeta, 0 \right]
\\
&S^{b,l}_\alpha=\left\{ -\sum_{\beta \in \BB'_{\overline{\pi}}(\alpha)}
\lambda_\bbeta \right\} 
\times \left[ -\sum_{\beta \in \BB'_{\overline{\pi}}(\alpha)} \tau_\bbeta, 0 \right] 
\end{aligned}
\ee

If $L_\pi=\sum_{\pi(*)<\pi(\beta)\leq \pi(\alpha(r))}\tau_\bbeta>0$ we define
\be \nonumber
\begin{aligned}\displaystyle
&S^{t,r}_{\alpha(r)}=-S^{b,l}_{i(\alpha(r))}=
\left\{ \sum_{\pi(*)<\pi(\beta)\leq \pi(\alpha(r))}\lambda_\bbeta \right\} \times 
\left[ 0, L_\pi \right]
\\
&S^{t,l}_{\alpha(l)}=S^{b,r}_{i(\alpha(l))}=\emptyset
\end{aligned}
\ee

If $L_\pi<0$ we define
\be \nonumber
\begin{aligned}\displaystyle
&S^{t,l}_{\alpha(l)}=-S^{b,r}_{i(\alpha(l))}=
\left\{ \sum_{\pi(\alpha(l))\leq \pi(\beta)<\pi(*)}\lambda_\bbeta \right\} \times 
\left[ 0, L_\pi \right]
\\
&S^b_{i(\alpha(r))}=S^t_{\alpha(r)}=\emptyset
\end{aligned}
\ee

Otherwise, if $L_\pi=0$ we take 
\be \nonumber
\begin{aligned}
&S^{t,l}_{\alpha(l)}=
S^{b,l}_{i(\alpha(r))}=
\left\{ -\sum_{\pi(\alpha(l))\leq \pi(\beta)<\pi(*)}\lambda_\bbeta \right\} \times 
\{0\}
\\
&S^{t,r}_{\alpha(r)}=S^{b,r}_{i(\alpha(l))}=
\left\{ \sum_{\pi(*)<\pi(\beta)\leq \pi(\alpha(r))}\lambda_\bbeta \right\} \times 
\{0\}
\end{aligned}
\ee

Notice that, for each $\alpha \in \AA$, the labels $l$ and $r$ in 
$X^{\epsilon,l}_\alpha$ and $X^{\epsilon,r}_\alpha$, where $\epsilon \in \{t,b\}$
and $X \in \{R, S\}$, are just to make clear when $\pi(\alpha)<\pi(*)$ or 
$\pi(*)<\pi(\alpha)$. When it does not lead to confusion, we will 
omit $l$ and $r$. 

\begin{example}
%To simplify the notation, given a letter we will represent 
%its image by involution with an inverted letter 
%(i.e. symmetric relatively to the horizontal or the vertical direction).

The Figure \ref{fig2} represents a zippered rectangle associated to 
$$\pi=D \quad i(B) \quad i(D) \quad C \quad i(C) \quad * \quad A \quad i(A) \quad B$$.

\begin{figure}[h]
   \begin{center}
\psfrag{RAt}{{\scriptsize $R_A^t$}}  \psfrag{RAb}{{\scriptsize $R_A^b$}}  
\psfrag{RinvAt}{{\scriptsize $R_{i(A)}^t$}}  \psfrag{RinvAb}{{\scriptsize $R_{i(A)}^b$}}  
\psfrag{SAt}{{\scriptsize $S_A^t$}}  \psfrag{SAb}{{\scriptsize $S_A^b$}}  
\psfrag{SinvAt}{{\scriptsize $S_{i(A)}^t$}}  \psfrag{SinvAb}{{\scriptsize $S_{i(A)}^b$}}

\psfrag{RBt}{{\scriptsize $R_B^t$}}  \psfrag{RBb}{{\scriptsize $R_B^b$}}  
\psfrag{RinvBt}{{\scriptsize $R_{i(B)}^t$}}  \psfrag{RinvBb}{{\scriptsize $R_{i(B)}^b$}}  
\psfrag{SBt}{{\scriptsize $S_B^t$}}  \psfrag{SBb}{{\scriptsize $S_B^b$}}  
\psfrag{SinvBt}{{\scriptsize $S_{i(B)}^t$}}  \psfrag{SinvBb}{{\scriptsize $S_{i(B)}^b$}}

\psfrag{RCt}{{\scriptsize $R_C^t$}}  \psfrag{RCb}{{\scriptsize $R_C^b$}}  
\psfrag{RinvCt}{{\scriptsize $R_{i(C)}^t$}}  \psfrag{RinvCb}{{\scriptsize $R_{i(C)}^b$}}  
\psfrag{SCt}{{\scriptsize $S_C^t$}}  \psfrag{SCb}{{\scriptsize $S_C^b$}}  
\psfrag{SinvCt}{{\scriptsize $S_{i(C)}^t$}}  \psfrag{SinvCb}{{\scriptsize $S_{i(C)}^b$}}

\psfrag{RDt}{{\scriptsize $R_D^t$}}  \psfrag{RDb}{{\scriptsize $R_D^b$}}  
\psfrag{RinvDt}{{\scriptsize $R_{i(D)}^t$}}  \psfrag{RinvDb}{{\scriptsize $R_{i(D)}^b$}}  
\psfrag{SDt}{{\scriptsize $S_D^t$}}  \psfrag{SDb}{{\scriptsize $S_D^b$}}  
\psfrag{SinvDt}{{\scriptsize $S_{i(D)}^t$}}  \psfrag{SinvDb}{{\scriptsize $S_{i(D)}^b$}}

   \includegraphics{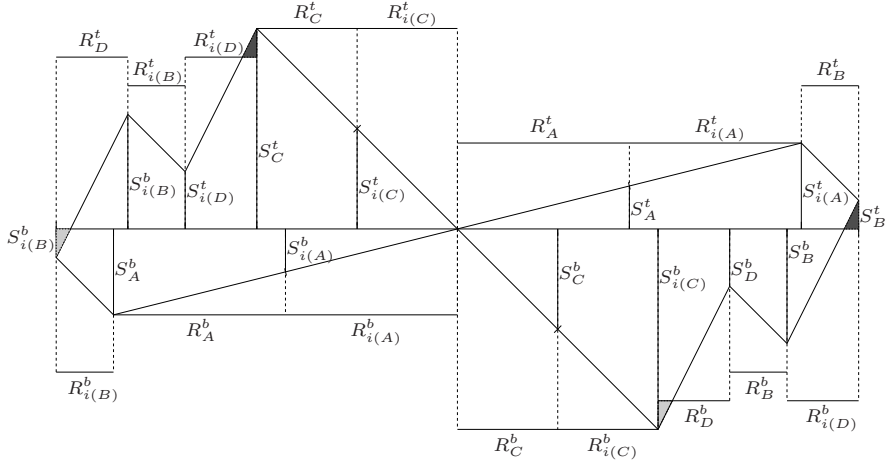}
       \caption{Zippered rectangle} \label{fig2}
   \end{center}
\end{figure}
\end{example}

Define the set 
\be \nonumber
R_{(\lambda, \pi, \tau)}=\bigcup_{\alpha \in \AA} 
\bigcup_{\epsilon \in \{l,r\}}
\left( R^\epsilon_\alpha \cup S_\alpha^\epsilon \right)
\ee

We will identify, by translation, the rectangle $R^t_\alpha$ with 
$R^b_\alpha$ for all $\alpha \in \AA$.

If $L_\pi>0$ we identify $S^t_{\alpha(r)}$ with the vertical segment 
$S_1$ of length $L_\pi$ 
at the bottom of the right side of the rectangle $R_{\alpha(r)}^b$
if $\alpha(r)$ is the winner of $\pi$ or 
at the top of the right side of the rectangle $R_{i(\alpha(l))}^t$
if $\alpha(r)$ is the loser of $\pi$.
Symmetrically, we identify $S^b_{i(\alpha(r))}$ with $-S_1$.

If $L_\pi<0$ we identify $S^b_{i(\alpha(l))}$ with the vertical segment 
$S_2$ of length $-L_\pi$ 
in the bottom of the right side of the rectangle $R_{\alpha(r)}^b$
if $\alpha(r)$ is the winner of $\pi$ or
in the top of the right side of the rectangle $R_{i(\alpha(l))}^t$
if $\alpha(r)$ is the loser of $\pi$.
Symmetrically, we identify $S^t_{\alpha(l)}$ with $-S_2$.

Let $\tilde S^*(\lambda, \pi, \tau)$ be the topological space obtained from 
$R_{(\lambda, \pi, \tau)}$ by these identifications. Thus, 
$\tilde S^*(\lambda, \pi, \tau)$ inherits from $\R^2=\C$ the structure of a 
Riemann surface and also a holomorphic 1-form $\omega$ (given by $dz$).

For each $\alpha \in \AA$ recall 
$\zeta_\alpha = \lambda_\alpha + i \tau_\alpha$.
We call \emph{vertices} the extreme points in the top of segments $S_\alpha^t$
and the extremes in the bottom of segments $S_\alpha^b$, for all 
$\alpha \in \AA$. So, the vertices are points with following coordinates:
\be \nonumber
\xi_\alpha^t =
\left\{
\begin{array}{lll}
\displaystyle 
 \sum_{\pi(\alpha) \leq \pi(\beta) <\pi(*)} -\zeta_\beta 
& \quad \text{if} \quad & \pi(\alpha)<\pi(*)
\\[5pt]
\displaystyle
 \sum_{\pi(*) < \pi(\beta) \leq \pi(\alpha)} \zeta_\beta 
& \quad \text{if} \quad & \pi(*)<\pi(\alpha)
\end{array}
\right.
\ee

\be \nonumber
\xi_\alpha^b =
\left\{
\begin{array}{lll}
\displaystyle
 \sum_{\overline{\pi}(\alpha) \leq \overline{\pi}(\beta) 
<\overline{\pi}(*)} -\zeta_\beta 
& \quad \text{if} \quad & \overline{\pi}(\alpha)<\overline{\pi}(*)
\\[5pt]
\displaystyle
 \sum_{\overline{\pi}(*) < \overline{\pi}(\beta) \leq 
\overline{\pi}(\alpha)} \zeta_\beta 
& \quad \text{if} \quad & \overline{\pi}(*)<\overline{\pi}(\alpha)
\end{array}
\right.
\ee

Now we will define a relation to identify vertices 
between them.
Define the set of all pairs $(\alpha, Y)$ with 
$\alpha \in \AA \cup \{*\}$ and $Y \in \{L, R\}$. 
Consider the following identification: 

\be \nonumber
\begin{aligned}
(\pi(\pi^{-1}(*)+1),L) \sim (*,R)\sim 
(\overline{\pi}(\overline{\pi}^{-1}(*)+1),L)
\\
(\pi(\pi^{-1}(*)-1),R) \sim (*,L)\sim 
(\overline{\pi}(\overline{\pi}^{-1}(*)-1),R)
\end{aligned}
\ee
\be \nonumber
\begin{aligned}
(\alpha(r),R) \sim (i(\alpha(l)),R)
\\
(\alpha(l),L) \sim (i(\alpha(r)),L)
\end{aligned}
\ee

We say that these pairs are \emph{irregular} and all other 
pairs we call \emph{regular}.
We also identify: 
\be \nonumber
\begin{aligned}
(\alpha(r),R) \sim (\beta, L) \quad \text{if} \quad \pi(\alpha)+1=\pi(\beta)
\\
(\alpha(r),R) \sim (\beta, L) \quad \text{if} \quad 
\overline{\pi}(\alpha)+1=\overline{\pi}(\beta)
\end{aligned}
\ee

We can extend $\sim$ to an equivalence relation in the set of pairs 
$(\alpha, Y)$. This equivalence relation describes how half-planes 
are identified when one winds around an end of 
$\tilde S^*(\lambda, \pi, \tau)$. Let $\tilde \Sigma$ be the set of 
equivalence classes relative to the relation $\sim$. Thus to each 
$c \in \tilde \Sigma$ we have one, and only one, end $v_c$ of 
$\tilde S=\tilde S^*(\lambda, \pi, \tau)$. When it does not lead 
to confusion we will use $\tilde S$ to mean $\tilde S(\lambda, \pi, \tau)$.
From the local structure around $v_c$, the compactification 
\be \nonumber
\tilde S(\lambda, \pi, \tau)=\tilde S^*(\lambda, \pi, \tau)
\cup(\cup_{\tilde \Sigma}\{v_c\})
\ee
is a compact Riemann surface with marked points $\{v_c\}$. The 1-form 
$\omega$ extends to a holomorphic 1-form on $\tilde S(\lambda, \pi, \tau)$
such that at the points $v_c$ we have marked zeroes of angle 
$2k_c\pi$ where $2k_c$ is the cardinality of the equivalence class 
of $c$. 
%Let $\tilde \Sigma$ be the set 
%of singularities of $\tilde S$, which is related to the set of points $v_c$.

Given $(x,0)$ on the bottom side of the rectangle $R_\alpha^t$, 
we can transport this point vertically and when we reach the top 
side, which is the point $(x,h_\alpha)$ we identify it with the 
point $(x+\omega_\alpha, 0)$, where 
$\omega=\Omega(\pi)\cdot\lambda$, in the top side 
of $R_\alpha^b$. So, we have the vertical flow well-defined 
almost everywhere (except in the points which reach 
singularities in finite time). It is clear that the return 
time of points in the rectangle $R_\alpha^t$ is equal to $h_\balpha$ 
and the area of the surface $\tilde S(\lambda, \pi, \tau)$ is 
$A(\lambda, \pi, \tau)=-2\langle \lambda,\Omega(\pi) \cdot \tau \rangle$. 

When we constructed the surface $\tilde S$, we have 
an implicit relation between the horizontal coor\-dinates 
$\lambda_\alpha$ and $\lambda_{i(\alpha)}$ and the vertical 
coordinates $\tau_\alpha$ and $\tau_{i(\alpha)}$. Indeed, we have 
an involution $I: \tilde S \to \tilde S$, 
with a fixed point at the origin,
defined as follows. Given any point $x \in \tilde S$ 
there exists $\alpha \in \AA$ such that $x \in R_\alpha^t$ or 
$x \in S_\alpha^t \cup S_\alpha^b$. Thus $-x \in R_{i(\alpha)}^b$ or 
$-x \in S_{i(\alpha)}^b \cup S_{i(\alpha)}^t$, respectively. 
So $I(x)$ is identified with $-x$. 
%Notice that $I$ can have fixed points placed at some vertices.

Let $S(\lambda, \pi, \tau)$ be the surface 
$\tilde S(\lambda, \pi, \tau)$ quotiented by the involution $I$ and let 
$\Sigma$ to be the set $\tilde \Sigma$ quotiented by the involution $I$.
We can see that this identification by involution implies that, 
for each $\alpha \in \AA$, the rectangle $R^t_\alpha$ is identified with 
the rectangle $R^t_{i(\alpha)}$ by a translation composed with a 
rotation of $180$ degrees. So, the top side (resp. the bottom side) 
of the rectangle $R^t_\alpha$ is identified with the bottom side 
(resp. the top side) of the rectangle $R^t_{i(\alpha)}$.

\subsection{Coordinates}\label{coordinates}

Let $(\tilde S, \tilde \Sigma, I, \tilde x_0, \tilde X)$ 
be a marked translation surface with an involution $I$.
The marked separatrix $\tilde X$ starts at $\tilde x_0$ 
and it goes to east.
A segment $\sigma$ adjacent to $\tilde x_0$ contained 
in $\tilde X$ is called \emph{admissible} if the vertical 
geodesic $Y$ passing through the right endpoint 
$\tilde z$ of $\sigma$ meets a singularity in the positive 
or in the negative direction before returning to 
$\sigma \cup I(\sigma)$. Symmetrically, if $\sigma$ is an admissible 
segment then $I(\sigma)$ starting at $I(\tilde x_0)$ 
going to west and ending at $I(\tilde z)$ (which has a vertical 
geodesic meeting a singularity in the negative or 
in the positive direction before return to $\sigma \cup I(\sigma)$),
also is an admissible segment if we consider the marked 
separatrix $I(\tilde X)$ instead consider $X$. 

We call a separatrix \emph{incoming} if its natural 
orientation points towards the associated singularity 
and we call it \emph{outgoing} otherwise.
Let $\sigma^+$ be the set of points of the first intersection of 
incoming vertical separatrices with $\sigma \cup I(\sigma)$ and 
$\sigma^-$ be the set of points of the first intersection of 
outgoing vertical separatrices with $\sigma \cup I(\sigma)$. 
Notice that $\tilde x_0$ and $I(\tilde x_0)$ are in both sets 
$\sigma^+$ and $\sigma^-$. If $Y$ is incoming (resp. outgoing) 
we extend it to the past (resp. future) until intersect 
$\sigma \cup I(\sigma)$ again and the second intersection 
point is an element of $\sigma^+$ (resp. $\sigma^-$). Thus 
$\tilde z$ is an element of both $\sigma^+$ and $\sigma^-$. By
the same argument applied to $I(Y)$ we conclude that 
$I(\tilde z)$ also is an element of both $\sigma^+$ and $\sigma^-$. 

Notice that $p \in \sigma^-$ if and only if $I(p) \in \sigma^+$, for 
all $p \in \sigma^-$. Thus we will consider just the set $\sigma^+$ which 
determines the set $\sigma^-$ by involution.

Let $|\lambda|$ be the length of $\sigma$ and of $I(\sigma)$. 
Let $\phi_r: \left[0,|\lambda|\right] \to \tilde S$ and 
$\phi_l: \left[-|\lambda|, 0\right] \to \tilde S$ be the parametrizations 
of $\sigma$ and $I(\sigma)$, respectively, by arc-length with 
$\phi_r(0)=\tilde x_0$ and $\phi_l(0)=I(\tilde x_0)$.

We can write:
\be \nonumber
\sigma^+= \{ I(\tilde z)=p_{-l}^+ > \ldots > p_{-1}^+ > p_{0^-}^+ =I(\tilde x_0) \}
\cup \{ \tilde x_0=p_{0^+}^+ < p_1^+ < \ldots < p_r^+ =\tilde z \}. 
\ee
where $<$ and $>$ refer to the natural orientation on $\sigma$ 
and $I(\sigma)$, respectively.

Therefore, we have numbers 
\be \nonumber
-|\lambda|=a_{-l}^+ < \ldots < a_{-1}^+ < a_{0^-}^+= 
0 = a_{0^+}^+ < a_1^+ < \ldots < a_r^+ = |\lambda|
\ee
such that $\phi_r(p_j^+)=a_j^+$, for all $j \in \{-l \ldots , -1, 1 \ldots r\}$, 
$\phi_r(p_{0^-}^+)=a_{0^-}^+=0$ and $\phi_r(p_{0^+}^+)=a_{0^+}^+=0$.

Let $a_0^+=a_{0^+}^+=a_{0^-}^+$.
Define $\lambda_j=|I_j|=a_{j+1}^+-a^+_j$ for $-l\leq j \leq r-1$ 
%and $\lambda_j=|I_j|=a_j^+-a^+_{j-1}$ for $1\leq j \leq r$. 
Let $\tau_j$ be the length of the vertical segment from the 
horizontal section to the singularity corresponding to 
the point $p_j^+$. Notice that $\tau_0=0$.

It is possible to verify that the first return map to the cross-section 
$\sigma \cup I(\sigma)$ is well-defined except at the points $p_j^+$. 
Moreover the first return time is constant on each open interval 
$(a_{j}^+, a_{j+1}^+)$.
So, we can consider the interval exchange transformation with involution $\pi$ 
associated to the cross-section $\sigma \cup I(\sigma)$ where the points 
defined before are the points of discontinuity. 

Let $h_j$ be the first return time of the points in the interval 
$(a_{j-1}, a_j)$. We can define the zippered rectangle which represents 
$(\tilde S, \tilde \Sigma, I, \tilde x_0, \tilde X)$ by:
\be \nonumber
ZR(\lambda, \pi, \tau, h)=\cup_{j} (a_{j-1}, a_j) \times [0,h_j].
\ee

\begin{lemma}
If two admissible segments $\sigma$ and $\tilde \sigma$ with the 
same left extreme point $\tilde x_0$ are such that 
$\tilde \sigma \subset \sigma$, then the corresponding 
zippered rectangles $(\lambda, \pi, \tau, h)$ and 
$(\tilde \lambda, \tilde \pi, \tilde \tau,\tilde h)$ satisfy:
\be \nonumber
\exists \ n \in \N \text{ such that } 
(\tilde \lambda, \tilde \pi, \tilde \tau)=\widehat Q^n(\lambda, \pi, \tau) 
\ee
\end{lemma}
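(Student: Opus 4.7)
The plan is to reduce the claim to the case where $\tilde\sigma$ is obtained from $\sigma$ by the \emph{minimal} admissible shortening, and then iterate. The symmetry imposed by the involution $I$ forces that any sub-segment $\tilde\sigma$ with $\tilde x_0$ as left endpoint automatically comes with its $I$-image as the associated sub-segment on the other side, so the shortening always acts symmetrically at both ends of $\sigma \cup I(\sigma)$.

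First I would describe one Rauzy step geometrically. Let $\alpha$ and $\beta$ be the leftmost and rightmost letters of $\pi$. Assuming $\lambda_{\underline\alpha}>\lambda_{\underline\beta}$, I shorten $\sigma$ on its right end by $\lambda_{\underline\beta}$; by applying $I$, this simultaneously shortens $I(\sigma)$ on its left end by $\lambda_{\underline\beta}$, which matches the required symmetric action. The new right endpoint is precisely the point $p_{r-1}^+$ of $\sigma^+$, at which (by definition of $\sigma^+$) a vertical separatrix meets a singularity before returning, so the shortened segment $\sigma'$ is again admissible. The case $\lambda_{\underline\beta}>\lambda_{\underline\alpha}$ is entirely analogous, shortening instead from the left end of $\sigma$.

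Next I would identify the new first-return data with $\widehat Q(\lambda,\pi,\tau)$. The rectangles of the zippered rectangle model for $\sigma'$ are obtained from those for $\sigma$ by cutting the rectangle indexed by the loser $\beta$ along the vertical at the new boundary and stacking the removed portion onto the winner's rectangle; this is the geometric realization of the operation of Section \ref{Rauzyclasses}, producing exactly $(\lambda',\pi')=Q(\lambda,\pi)$, while the suspension parameter $\tau$ transforms by $(B_\gamma^*)^{-1}$, matching the definition of the skew-product $\widehat Q$. Hence $(\lambda',\pi',\tau',h')$ is the zippered rectangle of $\widehat Q(\lambda,\pi,\tau)$.

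Finally I would iterate. Since $\tilde\sigma \subset \sigma$ is admissible, its right endpoint lies in $\sigma^+$ (and its left endpoint in $\sigma^-$ by involution symmetry), so after finitely many minimal shortenings one lands exactly on $\tilde\sigma$. The point requiring most care is the following claim, which I regard as the main obstacle: the sequence of admissible sub-segments of $\sigma$ with left endpoint $\tilde x_0$ is totally ordered by inclusion and is exhausted by the Rauzy trajectory. One must check that the Rauzy algorithm does not stop prematurely in a way that skips over some admissible $\tilde\sigma$; this is where the interplay with the involution matters, and one uses that both left and right operations occur infinitely often along the trajectory issued from any admissible segment (so that both endpoints are successively cut through every point of $\sigma^+\cup\sigma^-$). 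Once this enumeration is established, there is an $n\in\N$ with $(\tilde\lambda,\tilde\pi,\tilde\tau)=\widehat Q^n(\lambda,\pi,\tau)$, as desired.
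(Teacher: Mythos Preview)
Your approach is essentially the same as the paper's: both of you identify a single Rauzy step with passage to the maximal admissible proper sub-segment, and then iterate. Your description of the one-step geometric identification (cut-and-stack of the loser's rectangle onto the winner's, inducing $(B_\gamma^*)^{-1}$ on $\tau$) is more explicit than the paper's, and is correct.

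However, your termination argument contains an error. You claim that since $\tilde\sigma$ is admissible, its right endpoint lies in $\sigma^+$. This is false in general: $\sigma^+$ consists of \emph{first} hits of incoming separatrices on $\sigma\cup I(\sigma)$, while admissibility of $\tilde\sigma$ only says the separatrix through its right endpoint $\tilde z$ reaches a singularity before returning to $\tilde\sigma\cup I(\tilde\sigma)$. That separatrix may well cross $(\sigma\setminus\tilde\sigma)\cup(I(\sigma)\setminus I(\tilde\sigma))$ several times along the way, so $\tilde z$ need not be a first hit on the larger section. Consequently your finiteness argument (that Rauzy steps enumerate a fixed finite set $\sigma^+$) breaks down. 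The remark about left and right operations occurring infinitely often is also beside the point here: that is a statement about the forward dynamics and does not by itself force the endpoints to sweep through every admissible endpoint in finitely many steps.

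The paper's proof is equally terse on termination, simply asserting that the process eventually reaches $\tilde\sigma$. The honest justification is that by maximality one has $\tilde\sigma\subseteq\sigma^i$ at every stage, and the separatrix through $\tilde z$ crosses $\sigma\cup I(\sigma)$ only finitely many times before the singularity (the crossing times are bounded below by $\min_\alpha h_\alpha>0$); each Rauzy step that does not yet produce $\tilde\sigma$ eliminates at least one of these intermediate crossings from the current section, so the process terminates. You should replace your $\sigma^+$ claim with this kind of counting argument.
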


\begin{proof}
Let $\sigma$ and $\tilde \sigma$ be admissible segments and let 
the respective zippered rectangles representations
$ZR(\lambda, \pi, \tau, h)$ and 
$ZR(\tilde \lambda, \tilde \pi, \tilde \tau,\tilde h)$.

Consider a sequence of maximal admissible segments $\sigma^{i+1}$ 
strictly contained in $\sigma^i$ such that $\sigma^1=\sigma$. 
Let $z_i$ be the right endpoint of $\sigma^i$.
The right endpoint $z_2$ of $\sigma^2$ corresponds to a discontinuity 
point of the first return map of the vertical flow to the section $\sigma^1$ 
and there is no other discontinuity point between $z_2$ and $z_1$. 
By maximality, we conclude that, up to relabeling, $\widehat Q(\lambda,\pi,\tau)$ 
is the representation of such first return map. We follow this process until 
obtain $\sigma^n=\tilde \sigma$ for some $n \in \N$. For such $n$ we have
$\widehat Q^n(\lambda, \pi, \tau)=(\tilde \lambda, \tilde \pi, \tilde \tau)$.
\end{proof}

\begin{cor}
Let $(\tilde S, \tilde \Sigma, I, \tilde x_0, \tilde X)$ be a marked 
translation surface with involution and $ZR(\lambda, \pi, \tau, h)$ and 
$ZR(\tilde \lambda, \tilde \pi, \tilde \tau,\tilde h)$ be two zippered 
rectangle representations of the surface. Then there exists $n \in \Z$ such that 
$(\tilde \lambda, \tilde \pi, \tilde \tau)=\widehat Q^n(\lambda, \pi, \tau)$. 
\end{cor}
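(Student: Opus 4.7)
The plan is to reduce to the preceding lemma. The two zippered rectangle representations $ZR(\lambda,\pi,\tau,h)$ and $ZR(\tilde\lambda,\tilde\pi,\tilde\tau,\tilde h)$ of the same marked translation surface with involution arise from two admissible segments, say $\sigma$ and $\tilde\sigma$, each having left endpoint $\tilde x_0$ and lying inside the marked horizontal separatrix $\tilde X$. Since $\tilde X$ is a single (oriented) separatrix emanating from $\tilde x_0$, both $\sigma$ and $\tilde\sigma$ are initial segments of $\tilde X$ with the same left endpoint, so they are linearly ordered by inclusion: either $\tilde\sigma\subset\sigma$ or $\sigma\subset\tilde\sigma$.

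In the first case, $\tilde\sigma\subset\sigma$, the preceding lemma applies directly and yields $n\in\N$ with
\[
(\tilde\lambda,\tilde\pi,\tilde\tau)=\widehat Q^{\,n}(\lambda,\pi,\tau).
\]
In the second case, $\sigma\subset\tilde\sigma$, the preceding lemma (applied with the roles of the two segments reversed) gives $m\in\N$ with
\[
(\lambda,\pi,\tau)=\widehat Q^{\,m}(\tilde\lambda,\tilde\pi,\tilde\tau).
\]
Since $\widehat Q$ is invertible on its domain (we already observed that $\widehat Q^\gamma$ is a bijection between $\Delta_\gamma\times\Ga_\pi$ and $\Delta_{\pi'}\times\Ga_\gamma$, so its global extension is invertible where defined), this can be rewritten as $(\tilde\lambda,\tilde\pi,\tilde\tau)=\widehat Q^{-m}(\lambda,\pi,\tau)$, and we obtain the desired integer $n=-m\in\Z$.

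Since no third case is possible, this exhausts the argument. The only mild point to check is that every zippered rectangle representation of $(\tilde S,\tilde\Sigma,I,\tilde x_0,\tilde X)$ coming from the construction in Subsection~\ref{coordinates} is indeed produced by an admissible segment along $\tilde X$ starting at $\tilde x_0$ (which is built into the definition of admissibility and the marking), so there is no obstacle; no separate analysis is needed. Thus the corollary is a purely formal consequence of the preceding lemma together with the invertibility of $\widehat Q$.
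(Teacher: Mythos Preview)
Your proof is correct and follows essentially the same approach as the paper: both argue that the two admissible segments share the left endpoint $\tilde x_0$ and lie in the marked separatrix $\tilde X$, hence are nested, and then apply the preceding lemma. The paper simply phrases the case split as ``without loss of generality $\tilde\sigma\subset\sigma$'', whereas you spell out the second case and the use of invertibility of $\widehat Q$ explicitly.
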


\begin{proof}
Let $\sigma$ and $\tilde \sigma$ be admissible segments of 
$ZR(\lambda, \pi, \tau, h)$ and $ZR(\tilde \lambda, \tilde \pi, \tilde \tau,\tilde h)$, 
respectively.
By definition, the initial points of $\sigma$ and $\tilde \sigma$ 
are the same $\tilde x_0$. Suppose, without loss of generality, 
that $\tilde \sigma \subset \sigma$.

By the previous lemma there exists $n \in \N$ such that 
$(\tilde \lambda, \tilde \pi, \tilde \tau)=\widehat Q^n(\lambda, \pi, \tau)$.
Thus, the result follows.
\end{proof}

Given a marked translation surface with involution 
$(\tilde S, \tilde \Sigma, I, \tilde x, \tilde X)$
with zippered rectangle represen\-tation $ZR(\lambda,\pi,\tau, h)$, 
we can cut and paste it appropriately until we obtain a surface 
$(\tilde S', \tilde \Sigma', I', \tilde x', \tilde X')$ 
which representation in zippered rectangles is an iterated 
by Rauzy induction with involution of the first marked translation 
surface with involution. Since 
these operations preserve the relation between parallel sides, 
then $\tilde S$ and $\tilde S'$ are isomorphic and the marked 
separatrix is mapped to one another.
Moreover if we have a marked translation surface with involution 
$(\tilde S_1, \tilde \Sigma_1, I_1, \tilde x_1, \tilde X_1)$
which is near from $(\tilde S, \tilde \Sigma, I, \tilde x, \tilde X)$, 
by the continuity of the marked separatrix and of the 
singularities, we will obtain a zippered rectangle construction
$ZR(\lambda_1,\pi_1,\tau_1, h_1)$ near, up to relabel, from $ZR(\lambda,\pi,\tau, h)$.
So, the zippered rectangle construction, gives a system of 
local coordinates in each stratum of the moduli space.

Using the zippered rectangles construction, 
we obtain a finite covering $ZR$, of a stratum  
of the moduli space of marked translation surfaces 
with involution, $\MHI(\tilde g, \tilde \kappa, \eta, \tilde j)$.
Under the condition $\langle \lambda, h \rangle=1$, we get 
the space of zippered rectangles of area one covering the 
space $\MHI^{(1)}(\tilde g, \tilde \kappa, \eta, \tilde j)$.
We have a bijection between Rauzy classes with involution 
and a connected component of a stratum of the moduli space 
of translation surfaces with involution (see \cite{BL}).
Thus we have a well-defined map $\proj: \Om_\RR \to \CC$, 
where $\CC=\CC(\RR)$ is a connected component of 
$\MHI(\tilde g, \tilde \kappa, \eta, \tilde j)$ and 
$\proj \circ \widehat Q=\proj$. The fibers of this map 
are almost everywhere finite (with constant cardinality).
The projection of the standard Lebesgue measure on
$\Om_\RR$ is (up to scaling) the standard volume form on $\CC$.

The subset $\Om_\RR^{(1)}=\proj^{-1}(\CC^{(1)})$ of surfaces with area one is 
invariant by the Veech flow. So, the restriction 
$\TV_t(x):\Om_\RR^{(1)} \to \Om_\RR^{(1)}$ leaves invariant 
the volume form that projects, up to scaling, to the invariant 
volume form on $\CC^{(1)}$. It was proved by Veech that this volume form 
is finite using the lift measure on $\Om^{(1)}_\RR$.

\subsubsection{Homology and cohomology}

For each $\alpha \in \AA$ consider the curve $c_\alpha$ which is a 
path in $R_{(\lambda, \pi, \tau)}$ joining $\xi_\alpha^t-\zeta_\alpha$ 
to $\xi_\alpha^t$ if $\pi(\alpha)>\pi(*)$ or 
joining $\xi_\alpha^t$ to $\xi_\alpha^t+\zeta_\alpha$ 
if $\pi(\alpha)<\pi(*)$. Note that $I(c_\alpha)=-c_{i(\alpha)}$.
%Let us definde $c_\balpha$:
%\be \nonumber
%c_\balpha=
%\left\{
%\begin{array}{ll}
%c_\alpha & \text{if} \quad \pi(\alpha)>\pi(i(\alpha)) 
%\\[5pt]
%c_{i(\alpha)} & \text{if} \quad \pi(\alpha)<\pi(i(\alpha))
%\end{array}
%\right.
%\ee
%Note that $c_\balpha$ is well-defined. 
%since $c_{\underline{i(\alpha)}}=c_\balpha$.

Consider the relative homology group $H_1(\tilde S, \tilde \Sigma; \Z)$ of 
the surface $\tilde S$ relative to the finite set of singularities 
$\tilde \Sigma$. We have a decomposition of the relative homology 
group into an invariant subgroup $H_1^+(\tilde S,\tilde \Sigma; \Z)$ 
and an anti-invariant subgroup $H_1^-(\tilde S,\tilde \Sigma; \Z)$, 
with respect to the involution $I$.
Following Masur and Zorich (\cite{MZ}), 
%let us define $[c_\balpha]$:
%\be \nonumber
%[c_\balpha]=
%\left\{
%\begin{array}{ll}
%[c_\alpha] & \text{if} \quad [c_\alpha]=-[c_{i(\alpha)}] 
%\\[5pt]
%[c_\alpha]-[c_{i(\alpha)}] & \text{otherwise}
%\end{array}
%\right.
%\ee
%Note that $[c_\balpha]$ is well-defined up to an 
%appropriate choice of orientations.
we can choose a basis in $H_1^-(\tilde S,\tilde \Sigma; \Z)$ 
which has dimension $d-1$, where $d$ is the number of classes of 
$\BA$\footnote{The complex dimension of the moduli space of 
half-translation surfaces of genus $g$ with $\sigma$ singularities 
is $2g + \sigma -2$ and $d=2g+ \sigma -1$.}. 
%The elements of the basis will be lifts $c_\balpha$ of 
%curves $c_\alpha$ on $S$. 
The elements of the basis will be lifts of a collection of saddle connections 
on $S$, where $S$ is the surface $\tilde S$ quotiented by involution.
%a subcollection of elements $c_\balpha$. 

%In this way one obtains a map 
%$\varphi_1: \Delta_\pi \times \Ga_\pi \mapsto H_1^-(\tilde S,\tilde \Sigma; \Z)$. 
%Notice that for any surface $S'$ near from $S$, 
%the curves $c'_\alpha$, defined as previously, provide local coordinates. 

Analogously, the first (de Rham) cohomology group 
$H^1(\tilde S,\tilde \Sigma; \C)$, is decomposed into an invariant subspace 
$H^1_+(\tilde S,\tilde \Sigma; \C)$ and an anti-invariant subspace 
$H^1_-(\tilde S,\tilde \Sigma; \C)$, under the induced involution 
$I^*: H^1(\tilde S,\tilde \Sigma; \C) \to H^1(\tilde S,\tilde \Sigma; \C)$ 
Notice that $[\omega]$ is anti-invariant under the induced involution, 
so $[\omega] \in H^1_-(\tilde S, \tilde \Sigma, \C)$ 
and we also have:
\begin{equation*}
 \label{cycle}
\int_{c_\alpha} \omega=\zeta_\alpha
\end{equation*}

%Let $H^1(\tilde S,\tilde \Sigma; \C)$ be the first (de Rham) cohomology group. 
%We can consider the induced involution 
%$I^*: H^1(\tilde S,\tilde \Sigma; \C) \to H^1(\tilde S,\tilde \Sigma; \C)$ which
%decomposes the relative cohomology space 
%$H^1(\tilde S,\tilde \Sigma; \C)$ into an invariant subspace 
%$H^1_+(\tilde S,\tilde \Sigma; \C)$ and an anti-invariant subspace 
%$H^1_-(\tilde S,\tilde \Sigma; \C)$.
%Notice that $[\omega]$ is anti-invariant under the induced involution, 
%so $[\omega] \in H^1_-(\tilde S, \tilde \Sigma, \C)$ and 
%the local coordinates of $S(\lambda, \pi, \tau)$ are given by
%\be \label{cycle}
%\int_{c_\balpha} \omega=\zeta_\balpha
%\ee
%restricted to the set of $\zeta_\balpha=\lambda_\balpha + i \tau_\balpha$ 
%such that $\lambda, \tau \in \SS_\pi$. 

In section \ref{MHI} we have observed that 
$H^1_-(\tilde S,\tilde \Sigma; \C)$ yields local coordinates 
of an element of a stratum of the moduli space of translation 
surfaces with involution. So if we consider the set of 
$\zeta_\balpha=\lambda_\balpha + i \tau_\balpha$ such that $\lambda, \tau \in \SS_\pi$
we obtain coordinates which describe $\tilde S(\lambda, \pi, \tau)$.
%Hence we have a bijection 
%$\varphi_2$ between a neighborhood of $(\tilde S, \omega)$ 
%and the anti-invariant homology subgroup 
%$H_1^-(\tilde S, \tilde \Sigma; \Z)$. And, by the relation 
%(\ref{cycle}), we also have that $\varphi_1$ is injective. So, the map 
%\be \nonumber
%\varphi=\varphi_2^{-1} \circ \varphi_1: (\lambda, \pi, \tau) \mapsto (\tilde S, \omega), 
%\ee
%where $\tilde S$ is the corresponding translation surface with involution 
%associated to $(\lambda, \pi, \tau)$ and $\omega$ is the corresponding 
%closed one form associated to it, is well-defined and it is injective.
And as we have seen in section \ref{coordinates}, for any other pair 
$(\tilde S', \omega')$ in a neighborhood of $(\tilde S, \omega)$ 
we can find coordinates $(\lambda', \pi', \tau')$ of $(\tilde S', \omega')$, 
and so we can define the vectors $\zeta'_\balpha$ as in section \ref{ZR}.
For more details in the construction of coordinates see \cite{Ve86}.  
%thus $\varphi$ is a bijection.
%So the vectors $\zeta_\balpha$ will give us the coordinates of 
%$S(\lambda, \pi, \tau)$.

\subsection{Teichm\"uller flow is exponential mixing}

In section \ref{veechteichmullerflow} we have seen the 
relation between the Teichm\"uller flow and the Veech flow, 
which is naturally identified with the first return map of the 
renormalization operator to the section $\widehat \Up^{(1)}_\RR$.

We will identify $\widehat \Up^{(1)}_\RR \times \R$ with 
a connected component $\CC^{(1)}$ by the map
$P:\widehat \Up^{(1)}_\RR \times \R \to \CC^{(1)}$ defined 
by $P(z,s)=\TF_s(\proj(z))$, where $z=(\lambda, \pi, \tau)$ 
and $\proj:\widehat \Delta_\RR \to \CC$ is the natural projection.

\begin{lemma}
\label{lem:regularisation}
Let $f:\CC^{(1)} \to \R$ be a $C^1$ compactly supported function and let
$\delta>0$ be as in (\ref {tilde f}).
There exists $\epsilon_0>0$ and $C>0$ such that for
every $t>0$, there exists a $C^1$ function $f^{(t)}:\wde \to \R$,
such that $\|f \circ P-f^{(t)}\|_{L^2(\nu)} \leq
C e^{-\epsilon_0 t}$ and $\|f^{(t)}\|_{C^1(\wde)} \leq
C e^{\delta t}$.
\end{lemma}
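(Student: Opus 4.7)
The plan is to take $f^{(t)}$ to be simply the pullback $f\circ P$ multiplied by a smooth cutoff that kills the suspension coordinate beyond $s=\delta t$. The exponential tails of the roof function (Theorem~\ref{exponentialtails}) will supply the $L^{2}$ error, while the unstable expansion rate $e^{s}$ of $D\TF_{s}$ on $\lambda$-perturbations is what dictates the prefactor $e^{\delta t}$ in the $C^{1}$ bound. Concretely, I would fix once and for all $\rho\in C^{1}(\R,[0,1])$ with $\rho\equiv 1$ on $(-\infty,-1]$, $\rho\equiv 0$ on $[0,\infty)$ and $|\rho'|\le 2$, set $\rho_{t}(s)=\rho(s-\delta t)$, and define
\[
f^{(t)}(z,s)\;=\;\rho_{t}(s)\,f(P(z,s)),\qquad (z,s)\in \wde.
\]

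To verify the $C^{1}$ bound I would use that the Finsler metric on $\wde\subset\widehat\De\times\R$ is the product $\|(u,v)\|=\|u\|+\|v\|$, and that $\widehat\De$ is precompact in $\Delta_{\gamma_{*}}\times\Ga_{\gamma_{*}}$ by strong positivity of $\gamma_{*}$, so on $\widehat\De$ the Hilbert metric is equivalent to the Euclidean one and $D\proj$ is uniformly bounded. In period coordinates on $\CC^{(1)}$, the differential $D\TF_{s}$ acts by $e^{s}$ on $\lambda$-perturbations and $e^{-s}$ on $\tau$-perturbations, while $\partial_{s} f^{(t)}$ only sees the bounded Teichm\"uller vector field and the bounded $|\rho_{t}'|$. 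Putting these together,
\[
|\partial_{s}f^{(t)}|\le C\|f\|_{C^{1}},\qquad |\partial_{\tau}f^{(t)}|\le C\|f\|_{C^{1}}e^{-s}\le C\|f\|_{C^{1}},\qquad |\partial_{\lambda}f^{(t)}|\le C\|f\|_{C^{1}}e^{s},
\]
and on the support of $\rho_{t}$ one has $s\le\delta t$, giving $\|f^{(t)}\|_{C^{1}(\wde)}\le C\|f\|_{C^{1}}e^{\delta t}$.

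For the $L^{2}$ estimate, since $\rho_{t}\equiv 1$ on $\{s\le\delta t-1\}$,
\[
\|f\circ P-f^{(t)}\|_{L^{2}(\nu)}^{2}\;\le\;\|f\|_{\infty}^{2}\,\nu(\{s\ge\delta t-1\}).
\]
The measure $\nu$ is proportional to $\widehat\nu\otimes\Leb$ restricted to $\{0\le s<r_{\De}\}$, so $\nu(\{s\ge T\})$ equals, up to the normalization constant, $\int(r_{\De}-T)_{+}\,d\widehat\nu$. Theorem~\ref{exponentialtails} yields $\int e^{\sigma_{0}r_{\De}}d\widehat\nu<\infty$ for some $\sigma_{0}>0$, and a standard tail estimate then gives $\nu(\{s\ge T\})\le Ce^{-\sigma_{0}T/2}$; setting $\epsilon_{0}=\sigma_{0}\delta/4$ finishes the argument. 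The only point where I expect to need some care is matching the Hilbert-Finsler directions on $\wde$ with the period directions on $\CC^{(1)}$ so that the expansion and contraction rates of $D\TF_{s}$ really are $e^{\pm s}$ up to bounded multiplicative constants; thanks to precompactness of $\widehat\De$ this reduces to a bounded-constant change of coordinates, and no genuinely new ingredient is needed.
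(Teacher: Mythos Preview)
Your approach is correct and follows the same plan as the paper's proof, with two minor technical differences worth noting.

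First, the paper does not introduce a smooth cutoff $\rho_t(s)$: instead it exploits the disconnectedness of $\wde$ (its connected components are indexed by the branches of $T_\De$) and simply sets $f^{(t)}=0$ on every connected component containing a point with $s>\delta_0 t$, and $f^{(t)}=f\circ P$ elsewhere. This is automatically $C^1$ because the $C^1$ norm on $\wde$ is measured componentwise (the distance between distinct components is infinite). Your smooth cutoff works equally well and only costs the harmless extra term $\rho'_t(s)\,f(P(z,s))$ in $\partial_s f^{(t)}$; the $L^2$ error analysis is then identical up to replacing the componentwise tail set by $\{s\ge \delta t-1\}$, and the bounded-oscillation of $r_\De$ on each branch (condition~(2) of Definition~\ref{def:goodroof}) makes the two tail sets comparable anyway.

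Second, the paper cuts at height $\delta_0 t$ with $\delta_0>0$ chosen small, rather than at $\delta t$. This is because it invokes the off-the-shelf AGY Finsler metric on the ambient stratum, for which $\TF_s$ is only quoted to be $e^{2s}$-Lipschitz, giving $\|f^{(t)}\|_{C^1}\le C e^{2\delta_0 t}$; one then takes $\delta_0\le\delta/2$. Your direct computation in period coordinates gives the sharper rate $e^{s}$ and so allows the cutoff at exactly $\delta t$, but you should make explicit that it is the \emph{exponent}, not merely the prefactor, that must equal $1$ for your choice to work: the chain of metric comparisons you list (Hilbert on $\widehat\De$ to Euclidean by precompactness, Euclidean to period coordinates near the precompact set $\proj(\widehat\De)$, period to period under $D\TF_s$, period to the metric defining $\|Df\|$ on the compact support of $f$) introduces bounded multiplicative constants in front but leaves the exponent untouched, so the argument indeed goes through.
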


\begin{proof}
Let $\delta_0>0$ be small and let $Y_t \subset \wde$ be the union of
connected components of $\wde$ which contain points $(\lambda,\pi,\tau,s)$
with $s>\delta_0 t$.
Let $f^{(t)}=0$ in $Y_t$ and $f^{(t)}=f \circ P$ in the complement.  The
estimate $\|f \circ P-f^{(t)}\|_{L^2(\nu)} \leq
C e^{-\epsilon_0 t}$ is then clear since $\|f \circ P-f^{(t)}\|_{C^0} \leq
\|f\|_{C^0}$, while the support of $f \circ P-f^{(t)}$ has exponentially
small $\nu$ measure (since the roof function has exponential tails).

For the other estimate, it is enough to show that if $(z,s) \in \wde$ and
$P(z,s)$ belongs to any fixed compact set $K \subset \CC^{(1)}$
then $P$ is locally Lipschitz near
$(z,s)$, with constant bounded by $C(K) e^{C(K) s}$.  Here we fix some
arbitrary Finsler metric in $\CC^{(1)}$ (the precise choice is irrelevant
since $K$ is compact). This result is obvious if we impose some bound on
$s$, say $0 \leq s \leq 1$, since $P$ is smooth.
If $s_0>0$ is such that
$s_0<s<s_0+1$, notice that for $(z',s')$ in a neighborhood of $(z,s)$,
$P(z',s')$ is obtained from $P(z,s'-s_0)$
by applying the Teichm\"uller flow for time $s_0$. Thus, it is enough to
show that if $x$ and $\TF_{s_0}(x)$ belong to some fixed compact set of
$\CC^{(1)}$ then $\TF_{s_0}$ is locally $C e^{C s_0}$ Lipschitz in a
neighborhood of $x$. This is a well known estimate, for instance, we can
define a Finsler metric on $\CC^{(1)}$ such that $\TF_{s_0}$ is globally
Lipschitz with Lipschitz constant $e^{2 s_0}$ (see \cite {AGY} \S 2.2.2 for
the construction of a metric in the whole strata of squares, the Finsler
metric we need here being just the restriction to the substrata).
\end{proof}

\begin{lemma}
If $f:\CC^{(1)} \to \R$ is $C^1$ and compactly supported with $\int f
d\nu_{\CC^{(1)}}=0$ then there
exists $C>0$, $\epsilon>0$ such that for $t>0$,
\be \label {corr}
\int f \cdot \left(f \circ \TF_t \right)
d\nu_{\CC^{(1)}} \leq C e^{-\epsilon t}.
\ee
\end{lemma}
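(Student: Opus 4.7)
The plan is to transfer the correlation integral to the excellent hyperbolic semi-flow model $\widehat T_t$ on $\wde$ via the identification $P:\widehat \Up^{(1)}_\RR \times \R \to \CC^{(1)}$, and then combine the regularisation of Lemma~\ref{lem:regularisation} with the exponential mixing estimate (\ref{tilde f}).  The key facts underlying the reduction are that, by the zippered rectangles construction, $P$ conjugates $\widehat T_t$ to $\TF_t$ where both are defined, and the pushforward of the invariant probability $\nu$ under $P$ coincides (up to normalisation) with $\nu_{\CC^{(1)}}$; the set of orbits that never meet the section $\widehat \Up^{(1)}_\RR \cap (\Delta_{\gamma_*}\times \Theta_{\gamma_*})$ has zero measure.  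In particular $\int (f\circ P)\,d\nu=\int f\,d\nu_{\CC^{(1)}}=0$ and (\ref{corr}) equals $\int (f\circ P)(f\circ P\circ \widehat T_t)\,d\nu$.

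Next, I would apply Lemma~\ref{lem:regularisation} at some intermediate time $s=\sigma t$, with $\sigma\in(0,1)$ to be optimised, to obtain a $C^1$ function $f^{(s)}:\wde\to \R$ satisfying
\begin{equation*}
\|f\circ P - f^{(s)}\|_{L^2(\nu)}\leq C e^{-\epsilon_0 s},\qquad \|f^{(s)}\|_{C^1(\wde)}\leq C e^{\delta s}.
\end{equation*}
Using $\widehat T_t$-invariance of $\nu$ and the Cauchy--Schwarz inequality applied to both arguments in turn, the replacement error is controlled:
\begin{equation*}
\left|\int (f\circ P)(f\circ P \circ \widehat T_t)\,d\nu - \int f^{(s)}(f^{(s)}\circ \widehat T_t)\,d\nu\right|\leq 2\|f\|_{L^2}\,\|f\circ P - f^{(s)}\|_{L^2}\leq C' e^{-\epsilon_0 s}.
\end{equation*}
Similarly, since $\int f\circ P\,d\nu=0$, one gets $\bigl|\int f^{(s)}\,d\nu\bigr|\leq Ce^{-\epsilon_0 s}$, so the product term $(\int f^{(s)}\,d\nu)^2$ is of order $e^{-2\epsilon_0 s}$ and can be absorbed.

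Then the exponential mixing estimate (\ref{tilde f}) for the excellent hyperbolic semi-flow, applied to $U=V=f^{(s)}$, yields
\begin{equation*}
\left|\int f^{(s)}(f^{(s)}\circ \widehat T_t)\,d\nu-\Bigl(\int f^{(s)}\,d\nu\Bigr)^2\right|\leq C e^{-3\delta t}\|f^{(s)}\|_{C^1}^2\leq C'' e^{-3\delta t+2\delta s}.
\end{equation*}
Combining the three bounds, the correlation in (\ref{corr}) is at most $C\bigl(e^{-\epsilon_0 s}+e^{2\delta s-3\delta t}\bigr)$.  Choosing $\sigma$ so that $\epsilon_0\sigma=3\delta-2\delta\sigma$, that is $\sigma=3\delta/(\epsilon_0+2\delta)$, both terms become $e^{-\epsilon t}$ with $\epsilon:=3\delta\epsilon_0/(\epsilon_0+2\delta)>0$, which gives the desired exponential decay with some constant $C$ depending on $\|f\|_{C^1}$.

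The only non-routine point, to be recorded before the chain of estimates, is the compatibility of the measures and flows under $P$; this is the ``suspension model'' identification already described in Sections~\ref{veechteichmullerflow} and~\ref{zr}, so no real obstacle remains — the argument is essentially a clean transfer of the abstract result Theorem~\ref{main_thm_hyperbolic} through the regularisation scheme of Lemma~\ref{lem:regularisation}.
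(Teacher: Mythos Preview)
Your proposal is correct and follows essentially the same approach as the paper: transfer to the semi-flow model via $P$, approximate $f\circ P$ by the regularised $f^{(s)}$ of Lemma~\ref{lem:regularisation}, and apply the exponential mixing estimate (\ref{tilde f}). The only cosmetic difference is that the paper takes $s=t$ directly rather than optimising over $\sigma$: since Lemma~\ref{lem:regularisation} is stated with the same $\delta$ as in (\ref{tilde f}), the choice $s=t$ already gives $\|f^{(t)}\|_{C^1}^2\,e^{-3\delta t}\leq C e^{-\delta t}$ and replacement error $Ce^{-\epsilon_0 t}$, so no balancing is needed.
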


\begin{proof}
We can estimate (\ref {corr}) with exponentially small error
by comparison with the correlations
$\int f^{(t)} \cdot f^{(t)} \circ \widehat T_t d\nu-(\int f^{(t)} d\nu)^2$,
where $f^{(t)}$ is provided by the previous
lemma.  Those decays exponentially by (\ref {tilde f}).
\end{proof}

Finally, we are in a position to prove the main theorem:

\begin{proof}[Proof of Theorem \ref {t.main}]
Let $H$ be the Hilbert space of $\SO(2,\R)$ invariant $L^2(\nu_{\CC^{(1)}})$
functions with zero mean.
As shown in Appendix B of \cite {AGY}, exponential decay of correlations
for the Ratner class follows from the existence of a dense set of $f$
in $H$ such that (\ref {corr}) decays exponentially fast.  Since compactly
supported smooth functions are dense in $H$, the result follows.
\end{proof}

\end{document}